\documentclass[12pt,reqno]{amsart}
\usepackage{amsmath}
\usepackage{txfonts}
\usepackage{amsfonts}
\usepackage{a4wide}
\usepackage{amsmath,amsthm,amssymb,amscd}
\usepackage{latexsym}
\usepackage{hyperref}
\usepackage[numbers,sort&compress]{natbib}
\usepackage{hypernat}
\usepackage{color,enumitem,graphicx}
\usepackage{soul}
\usepackage{CJKutf8}
\usepackage{ulem}
\allowdisplaybreaks
\usepackage{cancel}
\usepackage{comment}
\numberwithin{equation}{section}

\newtheorem{theorem}{Theorem}[section]
\newtheorem{proposition}{Proposition}[section]
\newtheorem{corollary}{Corollary}[section]
\newtheorem{lemma}{Lemma}[section]

\theoremstyle{definition}

\newtheorem{remark}{Remark}[section]

\begin{document}
\title[Asymptotic behavior of ground state solutions]
{Asymptotic behavior of ground state solutions to  nonlinear elliptic problems with the fractional Laplacian}

\author{Jinge, Yang \textsuperscript{1}$^{, \ast}$ ,\,\, Jianfu, Yang \textsuperscript{2} }

\thanks{\textsuperscript{1} School of Science,
Jiangxi University of Water Resources and Electric Power, Nanchang, Jiangxi 330099, People's Republic of China}
\thanks{\textsuperscript{2}  Department of Mathematics, Jiangxi Normal University, Nanchang, Jiangxi 330022, People's Republic of China}

\thanks{Email:  jgyang2007@yeah.net(J.G. Yang)}
\thanks{Email: jfyang200749@sina.com(J.F. Yang)}
\thanks{*Corresponding author}
\begin{abstract}
 In this paper, we consider   the asymptotic behavior of the ground state solution $u_s$ of the nonlinear fractional Laplacian equation
 \begin{equation}\label{eq:0.1a}
  (-\Delta)^su+Vu=|u|^{p-2}u\quad x\in \mathbb{R}^n
  \end{equation}
  by taking  $s$ as a parameter, where $n\geq 4$,  $2<p<\frac{2n}{n-2}$, $V$ is a potential function. We show that for a fixed $p$,   there exists  $s_0\in(0,1)$ such that  equation \eqref{eq:0.1a} admits a ground state solution $u_s$ if   and only if  $s_0<s<1$. Our main results give a description of  the asymptotic behavior of $u_s$ as $s\uparrow1$ and $s\downarrow s_0$: $u_s$ converges to a function as  $s\uparrow1$, and it blows up as $s\downarrow s_0$. Particularly, we prove that $u_s$ concentrates at a minimum point of the function $V$ as  $s\downarrow s_0$. The local uniqueness of $u_s$ is also given.

{\bf Key words }: The fractional Laplacian, Ground states, Asymptotic behavior,  Local uniqueness.

{\bf MSC(2020) }: 35B38, 35B40, 35J60

\end{abstract}

\maketitle

\section{Introduction}

\bigskip

In this paper, we study  the asymptotic behavior of ground state solutions of the following nonlinear fractional Laplacian equation
\begin{equation}\label{eq:1.1}\tag{$P_s$}
(-\Delta)^su+Vu=|u|^{p-2}u,\quad {\rm in} \quad \mathbb{R}^n
\end{equation}
 by taking $s$ as a parameter, where $0<s<1$, $2<p<2^*:=\frac{2n}{n-2}$ and $n\geq 4$. The fractional Laplacian $(-\Delta)^s$ is defined  by
\begin{equation}\label{eq:1.2}
(-\Delta)^su(x)=D(n,s) \text{ P.V.}\int_{\mathbb{R}^n}\frac{u(x)-u(y)}{|x-y|^{n+2s}}\,dy,
\end{equation}
where  P.V. stands for the Cauchy principal value and
\begin{equation}\label{eq:1.3}
D(n,s)=\pi^{-\frac {n}2}2^{2s}\frac{\Gamma(\frac{n+2s}{2})}{\Gamma(2-s)}s(1-s).
\end{equation}
Equivalently, the fractional Laplacian $(-\Delta)^s$ can also be defined on the Schwartz space via the Fourier transform
\[
\mathcal{F}[(-\Delta)^su](\xi)=(2\pi|\xi|)^{2s} \mathcal{F}(u)(\xi),
\]
where $\mathcal{F}(u)(\xi)$ is the Fourier transform of $u$,  that is,
\[
\mathcal{F}(u)(\xi)=\int_{\mathbb{R}^n}e^{-2\pi ix\cdot \xi}u(x)\,dx.
\]
For more details on the definitions of the fractional Laplacian, we refer to \cite{BCP,CS}.

There has been a long history in the studies of semiclassical states of the nonlinear Schr\"{o}dinger equations
\begin{equation}\label{eq:1.3a}
\begin{cases}
-\varepsilon^2\Delta u +V(x)u=|u|^{p-2}u,\quad {\rm in} \quad \mathbb{R}^n,\\
u(x)\to 0\quad {\rm as }\quad |x|\to \infty
\end{cases}
\end{equation}
with various types of distinct concentration behaviors as $\varepsilon \to 0$. The essential relation of the concentrating solutions of \eqref{eq:1.3a} and the
 critical points of the potential function $V$ has been revealed in \cite{FW, O}.  Extensive researches for the problem have been carried on along this way.  Single peaked and multi-peaked solutions were given, for instance, in the works of \cite{ABC, AMS, BPW, BO,  PF, PF1, G} and references therein.

In a bounded domain $\Omega\subset \mathbb{R}^n$, for $n\geq 3$ and $\varepsilon>0$, the asymptotic behavior of solutions $u_\varepsilon$ of the near critical problem
\begin{equation*}
\begin{cases}
-\Delta u =u^{2^*-1-\varepsilon}\quad {\rm in} \quad \Omega,\\
u>0\quad \quad \quad \quad \, \,\, {\rm in} \quad\Omega,\\
u(x)= 0\quad \quad \quad \,\,{\rm on} \quad \partial\Omega\\
\end{cases}
\end{equation*}
was first studied   in \cite{AP} in the unit ball in $\mathbb{R}^3$ as $\varepsilon\to 0$,  and  in \cite{BP}  for the spherical
 domains.  The results for non-spherical domains was given  in \cite{H}.
  A counterpart research on the problem in $\mathbb{R}^n$ can be found in \cite{PW, W93,W}.

Analogous investigations on the fractional Schr\"{o}dinger equations
\begin{equation}\label{eq:1.3b}
\begin{cases}
\varepsilon^{2s}(-\Delta)^s u +V(x)u=|u|^{p-2}u,\quad {\rm in} \quad \mathbb{R}^n;\\
u(x)\to 0\quad {\rm as }\quad |x|\to \infty
\end{cases}
\end{equation}
as well as
\begin{equation}\label{eq:1.3c}
(-\Delta)^s u +V(x)u=|u|^{2^*_s-2-\varepsilon}u\quad {\rm in} \quad \mathbb{R}^n
\end{equation}
were given in \cite{AH, Am,CW, FQT,FS} etc. Here $2^*_s=\frac{2n}{n-2s}$ is the fractional Sobolev exponent.

In contrast to previous works, we will use in this paper the index $s$ as parameter in \eqref{eq:1.1} instead of $\varepsilon$ appeared in \eqref{eq:1.3b} and \eqref{eq:1.3c}, in the  study of the concentration of ground states of problem \eqref{eq:1.1}. By a ground state solution to  \eqref{eq:1.1}, we mean a solution of \eqref{eq:1.1} with the least  energy.
Such a problem is of particular interest in fractional quantum mechanics \cite{Laskin2000, Laskin2002},  Bose-Einstein condensation \cite{Ertik2012} and  dual Airy beams \cite{Longhi2015} and so on. Numerically, it was shown in \cite{Ertik2012} that the parameter $s$ varies according to the different dilute gas in Bose-Einstein condensation.

In mathematics, in the case $V=1$,  it was proved  in \cite{FLS} the  uniqueness of ground state solutions to \eqref{eq:1.1} by studying asymptotic behaviour of the ground state solutions as $s\to1$; we also refer to \cite{FV} for the local uniqueness of ground state solutions to \eqref{eq:1.1} for  $s$ close to $1$.

\bigskip

   Under various assumption on  potential $V$,  the existence of ground state solutions to \eqref{eq:1.1}  can be proved by the mountain pass theorem. Alternatively, the problem can also be solved by searching minimizers of the minimization problem
\begin{equation}\label{eq:1.4}
S_V(n,s)=\inf_{u\in H^s(\mathbb{R}^n)\setminus\{0\}}E_{V,s}(u),
\end{equation}
where
\[
E_{V,s}(u)=\frac{\int_{\mathbb{R}^n}|(-\Delta)^{s/2}u|^2\,dx+\int_{\mathbb{R}^n}Vu^2\,dx}
{\Big(\int_{\mathbb{R}^n}|u|^p\,dx\Big)^{2/p}}.
\]

More precisely, if there exists a nonnegative minimizer $w_s$ of $S_V(n,s)$, then $w_s$ satisfies
\[
(-\Delta)^sw_s+Vw_s=\lambda_s|w_s|^{p-2}w_s\quad {\rm in} \quad \mathbb{R}^n
\]
for some $\lambda_s>0$.  So $u_s=\lambda_s^{\frac{1}{p-2}}w_s$ solves \eqref{eq:1.1}. Moreover, $E_{V,s}(u_s)=E_{V,s}(w_s)=S_V(n,s)$. Therefore, $u_s$ is also a minimizer of $S_V(n,s)$. This implies that $u_s$ is a ground state solution of \eqref{eq:1.1}. Hence, there exists a minimizer of $S_V(n,s)$ which is also a ground state solution of \eqref{eq:1.1} if $S_V(n,s)$ is achieved. In general, if $u_s$ is a ground state solution of \eqref{eq:1.1}, then $u_s$ is a minimizer of $S_{V,s}$ and solves \eqref{eq:1.1}; and vice versa, see Lemma \ref{groundminimizer}.
It  was proved that $S_V(n,s)$ is achieved if $V$ is a positive constant in  \cite{DPV, FQT, FLS},
and in \cite{AFM, C, CW, Si} if $V$ is a non-constant potential.

Moreover, by Proposition 5.1.1 in \cite{DMV},   Proposition 2.1.9 in \cite{S} and Lemma 4.4 in \cite{CS}, one knows that if $V$ is bounded, then the ground state solution $u_s$ of \eqref{eq:1.1} is positive, bounded and belongs to  $C^{1, \,\beta}(\mathbb{R}^n)$ as well as $\lim_{x\to\infty}u_s(x)=0$.

In this paper, we fix $p\in(2,2^*)$. To insure  $p\in(2, 2^*_s)$   so that variational method can apply, we take $s$ as a parameter and find
 \[
 s_0=\frac{(p-2)n}{2p}
 \]
such that  $s_0<s<1$ if and only if $p\in(2, 2^*_s)$.
\bigskip

 We assume throughout this paper that  the potential function $V$ satisfies\\

$(V1)$ $V\in C^{1}(\mathbb{R}^n)$, \quad $0<V_0\leq V(x)< V_\infty:=\lim_{x\to\infty}V(x)<+\infty$;

\vspace{5pt}
$(V2)$ The function $x\cdot \nabla V(x)$  is bounded in  $\mathbb{R}^n$.

\bigskip

It is known from \cite{CW} that $S_V(n,s)$  is achieved by a function $u_s$ if $(V1)$ is fulfilled. We choose in the sequel that $u_s$ is also a ground state solution of \eqref{eq:1.1}. Thus,
\begin{equation}\label{keyequality}
\int_{\mathbb{R}^n}|(-\Delta)^{s/2}u_s|^2\,dx+\int_{\mathbb{R}^n}Vu_s^2\,dx=\int_{\mathbb{R}^n}|u_s|^p\,dx=S_V(n,s)^{\frac{p}{p-2}}.
\end{equation}

The purpose of this paper is to figure out  the asymptotic behaviour of $u_s$ as $s\uparrow 1$ and $s\downarrow s_0$.

Let
$$\mathfrak{C}= \{u\in H^1(\mathbb{R}^n):u \text{ is a nonnegative ground state solution of}\,\,  (P_1).\} $$

Apparently,  $\mathfrak{C}\not=\emptyset$. But it may contain many points due to the lack of uniqueness
for ground state solutions of  $(P_{1})$.

For any $\{s_k\}\in (s_0,1)$, correspondingly,  we have a sequence of ground state solutions $\{u_{s_k}\}$ of  $(P_{s_k})$.
If $s_k\uparrow 1$, then up to a subsequence,  $u_{s_k}$ converges in the following sense.

\begin{theorem}\label{thm:1.3a} Let $n\geq4$ and $2<p<2^*$.  Assume that $V$ satisfies $(V1)$ . Then  there exists  $u\in \mathfrak{C}$  \ such that
\begin{equation*}
\lim_{s_k\uparrow1}u_{s_k}=u
\end{equation*}
in  $L^p(\mathbb{R}^n)$ and $ C_{loc}(\mathbb{R}^n)$.
\end{theorem}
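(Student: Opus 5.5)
The argument runs in four steps: uniform bounds, a weak limit, identification of the limit as a ground state of $(P_1)$, and upgrading weak convergence to strong convergence in $L^p$ and locally uniform convergence.

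\textbf{Step 1: upper bound for the levels.} First I will show that
\[
\limsup_{s\uparrow1}S_V(n,s)\le S_V(n,1).
\]
Take a ground state $w\in\mathfrak{C}$ (or any $\varphi\in C_c^\infty$) as a test function in $E_{V,s}$. By the Fourier representation,
\[
\int|(-\Delta)^{s/2}w|^2=\int(2\pi|\xi|)^{2s}|\hat w|^2\to\int|\nabla w|^2,
\]
using dominated convergence with the bound $(2\pi|\xi|)^{2s}\le1+(2\pi|\xi|)^2$. Together with \eqref{keyequality}, this gives a uniform bound on $\|u_{s_k}\|_{H^{s_k}}$ and on $\|u_{s_k}\|_p$.

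\textbf{Step 2: a weak limit.} The bound on $\|(-\Delta)^{s_k/2}u_{s_k}\|_2$ degenerates as $s_k$ varies. To avoid this, I will fix $t<1$ close to $1$ with $p<2^*_t$. For $s_k>t$ the inequality $(2\pi|\xi|)^{2t}\le1+(2\pi|\xi|)^{2s_k}$ gives a uniform $H^t$ bound. Extracting a subsequence, $u_{s_k}\rightharpoonup u$ in $H^t$ and $u_{s_k}\to u$ in $L^q_{loc}$ for $q<2^*_t$. Passing to the limit in the weak formulation against $\varphi\in C_c^\infty$, via $(-\Delta)^{s_k}\varphi\to-\Delta\varphi$ in $L^2$, shows that $u\ge0$ solves $(P_1)$. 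By lower semicontinuity on the Fourier side (Fatou), $u\in H^1$.

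\textbf{Step 3 (main obstacle): excluding $u=0$ and loss of mass at infinity.} I need $u\neq0$ and $u_{s_k}\to u$ strongly in $L^p$. The plan has two parts.
\begin{enumerate}[label=(\alph*)]
\item \emph{Lower bound.} Show $\liminf S_V(n,s)\ge$ a positive constant. This follows from fractional Sobolev/Gagliardo–Nirenberg inequalities whose constants are uniform for $s$ near $1$ (through the $H^t$ bound).
\item \emph{Splitting argument.} Use the strict inequality $V<V_\infty$ from $(V1)$, in the form of the standard comparison $S_V(n,s)<S_{V_\infty}(n,s)$, with uniform control. Suppose part of the mass escaped to infinity. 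A Brezis–Lieb / profile decomposition of $u_{s_k}-u$ would then give
\[
\liminf S_V(n,s_k)\ \ge\ \min\{S_V(n,1),\,S_{V_\infty}(n,1)\}
\]
with a strict gain from the splitting. This contradicts
\[
\limsup S_V(n,s_k)\le S_V(n,1)<S_{V_\infty}(n,1).
\]
\end{enumerate}
Concretely, I expect to use Lions' vanishing lemma in $H^t$ to rule out vanishing, and the translated problem to rule out dichotomy. The delicate point is lower semicontinuity of $\int|(-\Delta)^{s_k/2}v_k|^2$ along varying $s_k$. I will handle it on the Fourier side with Fatou: $\liminf\int(2\pi|\xi|)^{2s_k}|\hat v_k|^2\ge\int(2\pi|\xi|)^2|\hat v|^2$ when $\hat v_k\to\hat v$ a.e. (up to a subsequence, via local compactness).

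Once $\|u_{s_k}\|_p\to\|u\|_p$, weak convergence upgrades to strong convergence in $L^p$. It also follows that $E_{V,1}(u)\le\liminf S_V(n,s_k)\le S_V(n,1)$, so $u$ is a minimizer, hence a ground state by Lemma \ref{groundminimizer}, i.e.\ $u\in\mathfrak{C}$.

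\textbf{Step 4: $C_{loc}$ convergence.} I need a uniform $L^\infty$ bound and a uniform Hölder estimate. For the $L^\infty$ bound, I will run a Moser iteration on $(-\Delta)^{s}u+Vu=u^{p-1}$, or use a comparison with the Bessel kernel of $(-\Delta)^s+V_0$, with constants uniform for $s\in[t,1)$. The regularity results cited (\cite{S,CS}) then give $C^{\beta}$ bounds with $\beta$ uniform for $s\in[t,1)$. Arzelà–Ascoli, combined with the identification of the limit from Step 2, yields $u_{s_k}\to u$ in $C_{loc}(\mathbb{R}^n)$.
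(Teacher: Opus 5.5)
Your proposal is correct and has the same overall structure as the paper's proof of Lemma \ref{s1}. Both use an upper bound from a test function, uniform bounds, and a weak limit. Both exclude loss of mass via $S_V(n,1)<S_{V_\infty}(n,1)$ together with $l^{2/p}+(1-l)^{2/p}>1$ for $0<l<1$. Both finish with uniform $L^\infty$ and H\"older bounds for the $C_{loc}$ convergence. (Your test function $w\in\mathfrak{C}$ in Step 1 is the right one. The paper's text tests with a minimizer of $S_{V_\infty}(n,1)$, which by itself only bounds $\limsup S_V(n,s)$ below $S_{V_\infty}(n,1)$.)

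The compactness step, however, is organized differently. The paper normalizes $v_s=u_s/\|u_s\|_p$. It gets an $H^1$ bound from the equation via $\int|\xi|^{4s}|\hat v_s|^2\,d\xi\le C$, using the $L^\infty$ bound of Lemma \ref{lem:A.3}. It then treats all escaping mass at once. If the weak limit vanishes, then $\int(V-V_\infty)v_k^2\to0$, and the whole sequence is compared with $S_{V_\infty}(n,s_k)\to S_{V_\infty}(n,1)$ (Lemma \ref{sjx}). If $0<l<1$, one Brezis--Lieb split with the trivial bound $S_{V_\infty}(n,s_k)\ge S_V(n,s_k)$ suffices. No Lions lemma or translation is needed, but Lemma \ref{sjx} rests on the convergence of constant-potential ground states from \cite{FLS}.

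Your route takes a fixed $H^t$ bound and uses Lions' lemma, fed by the lower bound on $S_V(n,s)$, to exclude vanishing. You then translate a non-vanishing bump so that it sees $V_\infty$, and estimate its energy by lower semicontinuity at $s=1$. This replaces the external input from \cite{FLS} by an internal argument. A single extracted profile is enough: the remainder is always bounded below by $\lim S_V(n,s_k)\,\|r_k\|_p^2$, so no full profile decomposition is required.

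Two justifications in your write-up need repair. First, local compactness in $x$ does not give $\hat v_k\to\hat v$ a.e., so Fatou is not available as stated. Instead, note that $(2\pi|\xi|)^{s_k}\hat v_k\rightharpoonup(2\pi|\xi|)\hat v$ weakly in $L^2$ (test against $C_c^\infty$ functions and use boundedness), and apply weak lower semicontinuity of the norm. Second, \cite{S,CS} provide H\"older and $L^\infty$ constants only for fixed $s$. The uniformity in $s\in[t,1)$ that your Step 4 relies on must be proved, which is exactly the content of the paper's Lemmas \ref{schauder} and \ref{lem:A.3}.
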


\bigskip

Since the ground state solution $u_s$ belongs to different Sobolev spaces for different $s$, and there is a lack of the compactness for the functional $E_{V,s}(u)$, the study of the asymptotic behavior of $u_s$ as $s\to 1$ and $s\to s_0$ becomes complicated. To avoid these difficulties, taking the advantage that $u_s$ is the ground state solution of  \eqref{eq:1.1}, we may use the pointwise convergence. As to this problem,  among other things, there are two significant ingredients involved. One is the regularity theory for non-local operators, another one is to bound constants appeared in the a priori estimates
uniformly in $s$.

\bigskip

In order to study the limiting behavior of $u_s$ as $s\downarrow s_0$, we remark that problem \eqref{eq:1.4} is related to
the critical minimization problem
\begin{equation}\label{eq:1.7}
S(n,s)=\inf_{u\in {H}^s(\mathbb{R}^n)\setminus\{0\}}\frac{\|(-\Delta)^{s/2}u\|_{L^2(\mathbb{R}^n)}^2}{\|u\|_{L^
{2_{s}^*}(\mathbb{R}^n)}^2}
\end{equation}
 as a limiting problem, which is  achieved by the function
\begin{equation}\label{eq:1.9a}
U(x)=c(\mu^2+(x-x_0)^2)^{-\frac{n-2s}{2}}, \quad x\in \mathbb{R}^n,
\end{equation}
where $c\in \mathbb{R}$, $\mu>0$ and $x_0\in \mathbb{R}^n$. The best fractional Sobolev constant $S(n,s)$  can be worked out as follows:
\begin{equation}\label{eq:1.8}
S(n,s)=2^{2s}\pi^s\frac{\Gamma(\frac{n+2s}{2})}{\Gamma(\frac{n-2s}{2})},
\end{equation}
see  \cite{CLO} for details.  By \cite{CT}, the critical equation
\begin{equation*}
(-\Delta)^{s}u=u^{2_{s}^*-1} \quad \text{ in } \quad\mathbb{R}^n,\quad
u(0)=1
\end{equation*}
has a unique positive solution
\begin{equation}\label{eq:1.9}
Q_{s}=\Bigg(1+\frac{|x|^2}{\lambda_s^2}\Bigg)^{\frac{2s-n}2},\quad {\rm where}\quad \lambda_s=2\Bigg(\frac{\Gamma\big(\frac{n+2s}{2}\big)}{\Gamma\big(\frac{n-2s}{2}\big)}\Bigg)^{\frac12}.
\end{equation}
We remark that
 \begin{equation*}
  p=2_{s_0}^*=\frac{2n}{n-2s_0}.
 \end{equation*}
In particular, the critical equation
\begin{equation*}
(-\Delta)^{s_0}u=u^{2_{s_0}^*-1} \quad \text{ in } \quad\mathbb{R}^n,\quad
u(0)=1
\end{equation*}
has a unique positive solution $Q_{s_0}$ satisfying
\begin{equation}\label{Qs0}
\|(-\Delta)^{s_0/2}Q_{s_0}\|_{L^2(\mathbb{R}^n)}^2=\|Q_{s_0}\|_{L^
{2_{s_0}^*}(\mathbb{R}^n)}^{2_{s_0}^*}=S(n,s_0)^{\frac{p}{p-2}}.
\end{equation}

Now we study the asymptotic behavior of $u_s$ as $s\downarrow s_0$.

We remark that if $0<s\leq s_0$ and  $\inf_{x\in \mathbb{R}^n}V>0,\, V\in L^\infty(\mathbb{R}^n)$, then $S_V(n,s)$ is not achieved, see Lemma \ref{lem:A1} in the Appendix.   Hence, $s_0$ is the thresholds  of the existence and non-existence of minimizers for $S_V(n,s)$. So it would be interested in studying the behavior of $u_s$ as   $s\downarrow s_0$.

Let $x_s\in \mathbb{R}^n$ be  a maximum point of  $u_s$.  Define
\begin{equation}\label{scaling1}
v_s(x):=\mu_s^{{\alpha_s}}u_s(\mu_s x+x_s)
\end{equation}
with
\begin{equation}\label{scaling2}
\mu_s^{-{\alpha_s}}:=\|u_s\|_\infty=u_s(x_s),\quad {\rm  and}\quad {\alpha_s}:=\frac{2s}{p-2}.
\end{equation}

Observe that there does not exist ground state solutions of \eqref{eq:1.1} when $s=s_0$, so one expects that ground state solutions $\{u_s\}$ will blow up, that is $\|u_s\|_{L^\infty(\mathbb{R}^n)}\to \infty$  as $s\downarrow s_0$. This fact will be verified in the proof of Theorem \ref{thm:1.1}. Based on this fact, we will analyze the asymptotic behavior of $\mu_s^{{\alpha_s}}u_s(\mu_s x+x_s)$ as $s\downarrow s_0$.

\bigskip

\begin{theorem}\label{thm:1.1} Let $n\geq4$, $2<p<2^*$ and assume  $(V1)$ and $(V2)$. Then for $s_0<s<1$, we have

$(i)$\begin{equation*}
\lim_{s\downarrow s_0}\mu_s^{{\alpha_s}}u_s(\mu_s x+x_s)= Q_{s_0}
\end{equation*}
in  $L^p(\mathbb{R}^n)$  and uniformly in  $\mathbb{R}^n;$

\bigskip

$(ii)$ $\lim_{s\downarrow s_0}V(x_{s}) = \inf_{x\in \mathbb{R}^n}V(x)$;

\bigskip

$(iii)$ there exists $A_{n,s_0}>0$ depending only on $n$ and $s_0$ such that
\begin{equation*}
\lim_{s\downarrow s_0}(s-s_0)\mu_s^{-2s}=A_{n,s_0}\inf_{x\in \mathbb{R}^n}V(x);
\end{equation*}

$(iv)$ For $s>s_0$ and $s$ close to $s_0$, there exists $C>0$ independent of $s$ such that
\begin{equation*}
\mu_s^{{\alpha_s}}u_s(\mu_s x+x_s)\leq \frac{C}{(1+|x|)^{n-2s}}, \quad x\in \mathbb{R}^n.
\end{equation*}
\end{theorem}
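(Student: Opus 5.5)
We first compare energy levels. Using $Q_{s_0}$ (rescaled by $\mu\to0$ and centered at a point $y_0$ near which $V$ is close to $\inf V$) as a test function for $S_V(n,s)$, and expanding in $s-s_0$, we obtain $\limsup_{s\downarrow s_0}S_V(n,s)\le S(n,s_0)$. Conversely, Sobolev and H\"older inequalities (interpolation between $L^2$ and $L^{2^*_s}$, with constants uniform in $s$ near $s_0$) give $\liminf S_V(n,s)\ge S(n,s_0)$. Hence, by \eqref{keyequality}, $\|u_s\|_p^p\to S(n,s_0)^{p/(p-2)}$. Blow-up then follows by contradiction. If $\|u_s\|_\infty$ stayed bounded, then uniform $C^{1,\beta}$ regularity together with concentration-compactness would give a nontrivial limit that is a minimizer for $S_V(n,s_0)$ (after excluding vanishing and dichotomy using $V<V_\infty$). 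This contradicts Lemma \ref{lem:A1}. So $\mu_s\to0$.

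For (i), the rescaled function $v_s$ satisfies
\[
(-\Delta)^s v_s+\mu_s^{2s}V(\mu_s x+x_s)v_s=v_s^{p-1},\qquad \|v_s\|_\infty=v_s(0)=1.
\]
The choice $\alpha_s=2s/(p-2)$ makes this scaling exact. Since $\|(-\Delta)^{s/2}v_s\|_2^2=\mu_s^{2\alpha_s+2s-n}\|(-\Delta)^{s/2}u_s\|_2^2$ and $2\alpha_s+2s-n\to0$, all norms stay bounded provided $\mu_s^{s-s_0}$ stays bounded; this follows from (iii), or from a direct Pohozaev bound. Uniform H\"older estimates for the fractional Laplacian, with constants bounded for $s$ in a compact subset of $(0,1)$, give local uniform convergence of $v_s$ to some $v\ge0$ with $v(0)=1$ solving $(-\Delta)^{s_0}v=v^{p-1}$. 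By \cite{CT}, $v=Q_{s_0}$. The energy identity then shows there is no loss of mass, so $v_s\to Q_{s_0}$ in $L^p$. Uniform convergence on all of $\mathbb{R}^n$ will follow from (iv), so (iv) should be proved before completing (i).

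For (iv), we use a Kelvin transform or comparison argument. Consider $w_s=v_s^{p-2}$ as a potential, so that $(-\Delta)^s v_s\le v_s^{p-2}v_s$. Then $v_s\le C\,(-\Delta)^{-s}(v_s^{p-1})$, the Riesz potential with kernel $c_{n,s}|x|^{2s-n}$. The $L^p$ convergence and a Moser/De Giorgi argument give a uniform smallness of $\|v_s\|_{L^{p}(|x|>R)}$. A bootstrap on the Riesz potential (as in the proof of decay for critical Sobolev extremals) then yields $v_s\le C|x|^{2s-n}$ for $|x|>R$, uniformly in $s$. I expect this to be the main obstacle: making the bootstrap constants uniform as $s\downarrow s_0$, and handling the fact that $p-1$ is only approximately critical.

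For (ii) and (iii), we use the Pohozaev identity for $(P_s)$:
\[
\frac{n-2s}{2}\int|(-\Delta)^{s/2}u_s|^2+\frac n2\int Vu_s^2+\frac12\int (x\cdot\nabla V)u_s^2=\frac np\int u_s^p .
\]
Here (V2) is needed to make the $x\cdot\nabla V$ term meaningful. Combining it with \eqref{keyequality} eliminates the gradient term and gives
\[
\Big(\frac np-\frac{n-2s}{2}\Big)\int u_s^p=s\int Vu_s^2+\frac12\int(x\cdot\nabla V)u_s^2 .
\]
Since $\frac np-\frac{n-2s}2=s-s_0$, the left side equals $(s-s_0)\int u_s^p$. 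In rescaled variables the right side is $\mu_s^{2s}\int\big[sV+\tfrac12 (x\cdot\nabla V)\big](\mu_sx+x_s)v_s^2$, up to a factor tending to $1$. The integral $\int v_s^2$ converges to $\int Q_{s_0}^2$: the decay in (iv) gives $v_s^2\lesssim |x|^{-2(n-2s)}$, which is integrable because $n\ge4>4s_0$. This is exactly where $n\ge4$ is used. The $\nabla V$ term vanishes in the limit by dominated convergence, after an argument showing $x_s$ stays bounded or that $(y\cdot\nabla V)(y)\to0$ weakly. We obtain $(s-s_0)\mu_s^{-2s}\to A_{n,s_0}\lim V(x_s)$ with $A_{n,s_0}=s_0\|Q_{s_0}\|_2^2/\|Q_{s_0}\|_p^p$.

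Finally, for (ii), the upper energy expansion with the test function centered at a minimum point of $V$ gives
\[
S_V(n,s)\le S(n,s_0)+(\text{term})\cdot\inf V\,\mu^{2s}-c(s-s_0)\log(1/\mu).
\]
The exact expansion of $E_{V,s}(u_s)$ has the same form with $V(x_s)$ in place of $\inf V$. Optimizing over $\mu$ and comparing the two gives $\limsup V(x_s)\le\inf V$, which proves (ii) and fixes the constant in (iii).
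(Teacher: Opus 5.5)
Your architecture matches the paper's: energy limit, blow-up by contradiction, local limit of $v_s$ identified as $Q_{s_0}$ with no loss of $L^p$ mass, and Pohozaev for (iii). Your Pohozaev computation is also right, since $\frac np-\frac{n-2s}{2}=s-s_0$ gives $A_{n,s_0}=s_0\|Q_{s_0}\|_2^2/\|Q_{s_0}\|_p^p$. (The paper's \eqref{br-1} drops a factor $s/n$ on the right, which only changes the value of the constant.) However, two steps do not work as proposed.

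\textbf{The decay (iv).} You correctly flag this as the main obstacle, but the proposed tools cannot overcome it. Once the potential is discarded, the bootstrap $v_s\le c\,I_{2s}(v_s^{p-1})$ started from uniform smallness of $\|v_s\|_{L^p(|x|>R)}$ cannot produce decay.
\begin{itemize}
\item For $s>s_0$ the scale-invariant norm of the coefficient $v_s^{p-2}$ is $L^{n/(2s)}$, i.e.\ $\|v_s\|_{L^{q_s}}$ with $q_s=\frac{(p-2)n}{2s}<p$. Bounds in $L^p\cap L^\infty$ say nothing about $L^{q_s}$ near infinity.
\item Correspondingly, Hardy--Littlewood--Sobolev maps $v\in L^t$ to $I_{2s}(v^{p-1})\in L^b$ with $\frac1b=\frac{p-1}{t}-\frac{2s}{n}$. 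One has $b>t$ exactly when $t>q_s$, so starting from $L^p$ the iteration only raises integrability.
\item A Kelvin transform alone does not fix this. The transformed coefficient is $|x|^{-2p(s-s_0)}\bar v_s^{p-2}$, and H\"older from $L^p$ leads to the divergent integral $\int_{B_r}|x|^{-pn}\,dx$.
\end{itemize}
What you are missing is a uniform far-field bound in the original variables that exploits $V\ge V_0>0$. First, $u_s(\cdot+x_s)\to0$ uniformly off $B_R$ (Lemma \ref{lem:4.3} plus the $s$-uniform Harnack inequality, Proposition \ref{lem:4.1}). Then comparison with the kernel of $(-\Delta)^s+\frac12V_0$ gives $u_s(x+x_s)\le C|x|^{-n}$ for $|x|\ge R$, uniformly in $s$ (Lemma \ref{lem:B.3}). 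In the Kelvin picture this controls the region $|x|\le\mu_s^2$. On $\mu_s^2\le|x|\le 2r$ the weight is harmless because $\mu_s^{s-s_0}\to1$. The $s$-uniform Moser iteration (Lemmas \ref{lem A.1} and \ref{lem:A.3}) then bounds $\|\bar v_s\|_\infty$.

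\textbf{The location (ii).} Comparing a bubble test function with an ``exact expansion'' of $E_{V,s}(u_s)$ requires $E_{V,s}(u_s)$ to precision $o(s-s_0)$. After optimizing $\mu^{2s}\sim s-s_0$, $V$ enters only through a term of order $(s-s_0)\log V$. That term sits next to a profile-dependent $O(s-s_0)$ term and a larger $(s-s_0)\log\frac{1}{s-s_0}$ term. Nothing in your argument controls the quotient of $v_s$ to that order. The logarithm also has the wrong sign: for $w_\mu=w(\cdot/\mu)$ the kinetic quotient is $\mu^{-2(s-s_0)}\|(-\Delta)^{s/2}w\|_2^2/\|w\|_p^2$, which grows as $\mu\to0$.

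The paper needs no expansion. Test $S_V(n,s)$ with the translate $u_s(\cdot+x_s-x_0)$, $x_0\in\mathcal{V}_m$. This cancels the kinetic and $L^p$ terms exactly and leaves $\int V(\mu_sx+x_s)v_s^2\le\int V(\mu_sx+x_0)v_s^2$. Dominated convergence on the right (using (iv) and $n\ge4>4s_0$) and Fatou on the left give that $x_s$ is bounded and $V(\lim x_s)=\inf V$. Hence $\nabla V(\lim x_s)=0$, which is exactly what removes the $x\cdot\nabla V$ term in your (iii).

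\textbf{Two smaller points.}
\begin{itemize}
\item $\mu_s^{s-s_0}\le1$ holds trivially once $\mu_s\le1$. Invoking (iii) in (i) is therefore unnecessary, and it would be circular, since your (iii) uses (iv), which uses (i).
\item In the blow-up step, dichotomy cannot be excluded via $V<V_\infty$, because at $s=s_0$ all the relevant levels equal $S(n,s_0)$. Argue directly instead, as the paper does. A nontrivial local limit $v_0$ solves the $s_0$-equation with a positive potential, so $\|v_0\|_p^{p-2}>S(n,s_0)$. This contradicts $\|v_0\|_p^p\le\lim\|u_s\|_p^p=S(n,s_0)^{p/(p-2)}$.
\end{itemize}
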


\bigskip

By Theorem \ref{thm:1.1},  for $s\downarrow s_0$, $u_s$ can be written as
\begin{equation*}
 u_s(x)=\Big[A_{n,s_0}\inf_{x\in \mathbb{R}^n}V(x)\Big]^{\frac{1}{p-2}}(s-s_0)^{-\frac{1}{p-2}}Q_{s_0}\Bigg(\Big[A_{n,s_0}\inf_{x\in \mathbb{R}^n}V(x)\Big]^{-\frac{1}{2s}}(s-s_0)^{-1/(2s)}(x-x_s)\Bigg)+o(1).
\end{equation*}
Moreover, the limit point of $x_s$ stays in the set $\mathcal{V}_m$ of minimum points of $V(x)$.

\bigskip

In the proof of Theorem \ref{thm:1.1},  after proving that
$$
\lim_{s\downarrow s_0}S_V(n,s)=S(n,s_0)\quad{\rm and}\quad \lim_{s\downarrow s_0}\|u_s\|_p^p= S(n,s_0)^{\frac p{p-2}},
$$
we show that $\mu_s^{{\alpha_s}}u_s(\mu_s x+x_s)\to Q_{s_0}$ in $L^p(\mathbb{R}^n)$. The convergence of $\mu_s^{{\alpha_s}}u_s(\mu_s x+x_s)\to Q_{s_0}$ in $C_{loc}(\mathbb{R}^n)$ follows from Harnack inequalities and the  Schauder  estimate with the uniform universal constants in $s$  for the equation $(-\Delta)^su=f\in L^\infty(\mathbb{R}^n)$ whenever $s>s_0>0$.

In order to locate the limit point of $x_s$ as $s\downarrow s_0$, we establish uniformly decaying laws of $u_s(x+x_s)$ and $\mu_s^{{\alpha_s}}u_s(\mu_s x+x_s)$ in order.

First, we derive from the Harnack inequality for
$$(-\Delta)^su(x)= a(x)u+b(x)
$$
and
$$
\mu_s^{{\alpha_s}}u_s(\mu_s x+x_s)\to Q_{s_0}\quad {\rm in}\quad  L^p(\mathbb{R}^n)
$$
as $s\downarrow s_0$ that
$u_s(x+x_s)\to 0$ uniformly as $s\downarrow s_0$, and then
$$|u_s(x+x_s)|\leq C|x|^{-n}\quad {\rm for}\quad|x|\geq R.
$$

Next, we establish a $L^\infty$ uniform estimate in $s$ for  nonnegative solution $u$ of
$$(-\Delta)^su(x)\leq a(x)u(x),$$
and  then, an application of the Kalvin transform
yields that
$$\mu_s^{{\alpha_s}}u_s(\mu_s x+x_s)\leq C(1+|x|)^{-(n-2s)}.$$

Inspired of \cite{WZ}, the  conclusion $$V(x_s)\to\inf_{x\in\mathbb{R}^n}V(x)$$
follows  by the asymptotic behavior of $u_s$ and the estimation for the minimizing problem:
\[
E_{V,s}(u_s)=\inf_{u\in H^s(\mathbb{R}^n)\setminus\{0\}}E_{V,s}(u)\leq E(u_s(\cdot+x_s-x_0)).
\]

We remark  that   the assumption $(V2)$ is used to derive a precise blowup rate of  ground states via a Pohozaev identity, which is not needed in the proof of (i) and (ii) in  Theorem \ref{thm:1.1}.

\bigskip

 Next,  we can show that as $s\downarrow s_0$, up to subsequence, $u_s$ concentrates at a point of $\mathcal{V}_m$ . Precisely, let $\{s_k\}$,  $s_0<s_k<1$, be a sequence such that   $s_k\downarrow s_0$. Correspondingly, we have a sequence $x_{s_k}\subset \mathbb{R}^n$ satisfying $x_{s_k}\to x_0$ as $s_k\downarrow s_0$. By Theorem \ref{thm:1.1}, we know that $x_0$ is actually a minimum point of $V(x)$. That is:

\begin{corollary}\label{cor:1.1a} Assume  $(V1)$--$(V2)$. Let $n\geq 4$   and $2<p<2^*$.  Then we have
 $$
 V(x_0)=\inf_{x\in \mathbb{R}^n}V(x)
 $$
  and
\[
|u_{s_k}|^p(x)\to S(n,s_0)^{\frac p{p-2}}\delta(x-x_0)
\]
in the sense of distribution as $k\to \infty$.
\end{corollary}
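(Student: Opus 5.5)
The plan is to derive the corollary directly from Theorem \ref{thm:1.1}, parts $(i)$, $(ii)$ and $(iv)$, together with the convergence $\lim_{s\downarrow s_0}\|u_s\|_p^p=S(n,s_0)^{\frac p{p-2}}$ established in its proof.

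\textbf{The minimum property.} Since $V$ is continuous and $x_{s_k}\to x_0$, we have $V(x_{s_k})\to V(x_0)$. Theorem \ref{thm:1.1}$(ii)$ then gives $V(x_0)=\inf_{\mathbb{R}^n}V$ at once.

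\textbf{Reduction by rescaling.} For the distributional limit, fix $\varphi\in C_c^\infty(\mathbb{R}^n)$ and change variables $x=\mu_{s_k}y+x_{s_k}$. Writing $v_s(y)=\mu_s^{\alpha_s}u_s(\mu_s y+x_s)$ and using $\alpha_s p-n=\frac{2sp}{p-2}-n$, we get
\[
\int_{\mathbb{R}^n}|u_{s_k}|^p\varphi\,dx=\mu_{s_k}^{\,n-\alpha_{s_k}p}\int_{\mathbb{R}^n}v_{s_k}^p(y)\,\varphi(\mu_{s_k}y+x_{s_k})\,dy .
\]
\begin{itemize}
\item The exponent $n-\alpha_s p$ tends to $n-\frac{2s_0p}{p-2}=0$, since $s_0=\frac{(p-2)n}{2p}$.
\item By Theorem \ref{thm:1.1}$(iii)$ (or simply by blow-up), $\mu_s\to 0$. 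Writing $\mu_s^{\,n-\alpha_s p}=\exp\big((n-\alpha_s p)\log\mu_s\big)$, we need $(s-s_0)\log\mu_s\to0$. This holds because $(iii)$ gives $\mu_s^{-2s}\sim C(s-s_0)^{-1}$, so $\log\mu_s=O(|\log(s-s_0)|)$. Hence the prefactor tends to $1$.
\end{itemize}

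\textbf{Passage to the limit.} Split the $y$-integral into $|y|\le R$ and $|y|>R$.
\begin{itemize}
\item On $|y|\le R$, $\varphi(\mu_{s_k}y+x_{s_k})\to\varphi(x_0)$ uniformly, because $\mu_{s_k}\to0$ and $x_{s_k}\to x_0$. Also $v_{s_k}\to Q_{s_0}$ in $L^p$ by Theorem \ref{thm:1.1}$(i)$.
\item On $|y|>R$, the tail is bounded by $\|\varphi\|_\infty\int_{|y|>R}v_{s_k}^p$. This is small uniformly in $k$, either by the $L^p$ convergence or by the uniform decay $(iv)$: since $p(n-2s)\to 2n>n$, the bound $(1+|y|)^{-p(n-2s)}$ is integrable.
\end{itemize}
Therefore the integral tends to $\varphi(x_0)\|Q_{s_0}\|_p^p=\varphi(x_0)S(n,s_0)^{\frac p{p-2}}$ by \eqref{Qs0}, which is the claimed convergence to $S(n,s_0)^{\frac p{p-2}}\delta(x-x_0)$.

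\textbf{Main obstacle.} The only delicate point is the prefactor $\mu_{s_k}^{\,n-\alpha_{s_k}p}\to1$. This requires the blow-up rate of $(iii)$, rather than mere blow-up, which is where $(V2)$ enters.

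A more robust alternative avoids the prefactor entirely. Use $\|u_{s_k}\|_p^p\to S(n,s_0)^{\frac p{p-2}}$ together with the fact that the $L^p$ mass concentrates near $x_{s_k}$. Indeed, the mass of $u_{s_k}$ in $B_{\mu_{s_k}R}(x_{s_k})$ equals the prefactor times $\int_{B_R}v_{s_k}^p$. It is therefore cleanest to bound the prefactor using $\mu_s\to0$ and the $(iii)$ asymptotics.
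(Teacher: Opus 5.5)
Your argument is correct and is essentially the paper's proof: you rescale by $x=\mu_{s_k}y+x_{s_k}$, use $v_{s_k}\to Q_{s_0}$ in $L^p$, use continuity of $\varphi$ and $V$, and use $\|Q_{s_0}\|_p^p=S(n,s_0)^{\frac p{p-2}}$.

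The one difference is the prefactor, where you invoke the blow-up rate $(iii)$. The paper instead uses $\mu_s^{s-s_0}\to1$ from Lemma \ref{lem2.2}. That limit follows without $(V2)$ from the identity $\mu_s^{\,n-\alpha_s p}=\|u_s\|_p^p/\|v_s\|_p^p$, since both norms tend to $S(n,s_0)^{\frac p{p-2}}$. So your remark that $(iii)$ and $(V2)$ are genuinely needed for this step is mistaken, although using them is legitimate here because $(V2)$ is assumed.
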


\bigskip

Corollary \ref{cor:1.1a} implies that if $V(x)$  has a unique minimum point $x_0$, any ground state solution will concentrate at  $x_0$ as $s\downarrow s_0$.

Finally, we deal with the local uniqueness of ground state solutions.  That is, for $s>s_0$ and close to $s_0$, there exists a unique ground state solution of \eqref{eq:1.1}.

If $V$ is a positive constant, the uniqueness of ground state solutions to \eqref{eq:1.1} can be found in \cite{FLS}. Very recently, the uniqueness result was generalized to  the case $V=|x|^2+\omega$ in \cite{GT} by the approach in \cite{FLS}.  This approach    depends on the explicit form of $V$ and the uniqueness of ground state solutions of $(P_1)$.   It seems not applicable to our case.

 We  use Pohozaev identities to  prove the local uniqueness of ground state solutions to Eq.\eqref{eq:1.1}. This approach is  a  powerful in proving local uniqueness of single or multi-bump solutions to various elliptic equations \cite{CLL,DLY,GLW,GNNT,HT} etc.

 Assume  additionally for $n\geq5$ that

$(V3)$ $V(x)$  has a unique minimum point $x_0$ and
\[
\begin{cases}
V(x)=V(x_0)+A|x-x_0|^m+O(|x-x_0|^{m+1}),\quad x\in B_\tau(x_0);\\
\nabla V(x)=mA|x-x_0|^{m-2}(x-x_0)+O(|x-x_0|^{m}),\quad x\in B_\tau(x_0),
\end{cases}
\]
where $\tau$ is a small positive constant, $m\in(1, n-4s_0)$  and $A>0$.

We note that the critical point $x_0$ of $V(x)$ is degenerate if $m>2$, and  is nondegenerate if $m=2$.  The potential function $V(x)$ is not $C^2(\mathbb{R}^n)$ if $m\in(1,2)$.

Under the assumptions  $(V1)$--$(V3)$ and $n\geq5$, we may obtain the refined asymptotic behavior of $u_s$ and $x_s$, that is
  $$
  \lim_{s\downarrow s_0}(x_s-x_0)\mu_s^{-1}=0
  $$ and
\begin{equation*}
\lim_{s\downarrow s_0}\eta_s^{\alpha_s}u_s(\eta_s x+x_0)= Q_{s_0} \text{ uniformly in } x\in \mathbb{R}^n,
\end{equation*}
where
 \[
\eta_s=[A_{n,s_0}V(x_0)]^{-1/(2s)}(s-s_0)^{1/(2s)},
\]
see Lemma \ref{refined location}.

The uniqueness is proved by showing a contradiction. Indeed, were it not the case,  there would exist $s_k\downarrow s_0$ as $k\to\infty$ such that $u_{s_k,1}\neq u_{s_k,2}$ for $k$ large enough. We will try to prove that
\[
\|w_k\|_{L^\infty(\mathbb{R}^n)}\to 0,
\]
where
\begin{equation*}
w_k(x)=\frac{\eta_{s_k}^{\alpha_{s_k}}u_{s_k,1}(\eta_{s_k}x+x_0)-\eta_{s_k}^{\alpha_{s_k}}u_{s_k,2}(\eta_{s_k}x+x_0)}
{\|\eta_{s_k}^{\alpha_{s_k}}u_{s_k,1}(\eta_{s_k}x+x_0)-\eta_{s_k}^{\alpha_{s_k}}u_{s_k,2}(\eta_{s_k}x+x_0)\|_{L^\infty(\mathbb{R}^n)}}.
\end{equation*}
However, $w_k$ may be sign-changing, we turn to work on  $|w_k|$.  By the Kato inequality, $|w_k|$ satisfies the following inequality
\begin{equation*}
(-\Delta)^{s_k}|{w}_k|
\leq \frac{\tilde{v}_{k,1}^{p-1}-\tilde{v}_{k,2}^{p-1}}{\tilde{v}_{k,1}-\tilde{v}_{k,2}}|{w}_k|.
\end{equation*}
A use of the   Harnack inequality for the inequality
\begin{equation*}
(-\Delta)^su(x)\leq a(x)u,
\end{equation*}
 we  derive by the Kelvin transform that
\begin{equation}\label{decay234}
|{w}_k|\leq C(1+|x|^2)^{-\frac{n-2s}{2}},
\end{equation}
where $C>0$ independent of $s$.  By Pohozaev identities and the  blowup argument, we see  that  $w_k\to 0$ in $C_{loc}^{0,\beta}(\mathbb{R}^n)$. This  with \eqref{decay234} yields $\|w_k\|_{L^\infty(\mathbb{R}^n)}\to 0$, a contradiction to $\|w_k\|_{L^\infty(\mathbb{R}^n)}=1$.

\bigskip
Our local uniqueness result is as follows.
\begin{theorem}\label{thm:1.3}Let $n\geq 5$,  $2<p<2^*$ and $s_0<s<1$.  Assume  $(V1)$--$(V3)$. Then there exists a unique ground state  solution to \eqref{eq:1.1} for $s>s_0$ and close to $s_0$.
\end{theorem}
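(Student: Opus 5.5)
We argue by contradiction, following the outline sketched before the statement. Suppose there are $s_k\downarrow s_0$ and two distinct ground states $u_{s_k,1}\neq u_{s_k,2}$ of $(P_{s_k})$. Set $\tilde v_{k,i}(x)=\eta_{s_k}^{\alpha_{s_k}}u_{s_k,i}(\eta_{s_k}x+x_0)$, $i=1,2$. Each $\tilde v_{k,i}$ solves
\[
(-\Delta)^{s_k}\tilde v_{k,i}+\eta_{s_k}^{2s_k}V(\eta_{s_k}x+x_0)\tilde v_{k,i}=\tilde v_{k,i}^{p-1},
\]
and, by the refined asymptotics (Lemma \ref{refined location}) together with Theorem \ref{thm:1.1}(iv), $\tilde v_{k,i}\to Q_{s_0}$ uniformly with a uniform bound $\tilde v_{k,i}\le C(1+|x|)^{-(n-2s_k)}$. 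Define $w_k=(\tilde v_{k,1}-\tilde v_{k,2})/\|\tilde v_{k,1}-\tilde v_{k,2}\|_\infty$. It satisfies the linear equation
\[
(-\Delta)^{s_k}w_k+\eta_{s_k}^{2s_k}V(\eta_{s_k}x+x_0)w_k=c_k(x)w_k,\qquad c_k=\frac{\tilde v_{k,1}^{p-1}-\tilde v_{k,2}^{p-1}}{\tilde v_{k,1}-\tilde v_{k,2}}\le C(1+|x|)^{-(n-2s_k)(p-2)}.
\]

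\emph{Step 1 (uniform decay of $w_k$).} By Kato's inequality, $(-\Delta)^{s_k}|w_k|\le c_k|w_k|$. The Kelvin transform $\hat w_k(x)=|x|^{-(n-2s_k)}|w_k(x/|x|^2)|$ turns this into an inequality with potential $|x|^{-4s_k}c_k(x/|x|^2)$. Since $(n-2s_0)(p-2)=4s_0$, this potential is bounded near $0$, up to an error that vanishes as $s_k\to s_0$. The uniform-in-$s$ local $L^\infty$ estimate for subsolutions of $(-\Delta)^su\le a u$ then bounds $\hat w_k$ near $0$, which gives \eqref{decay234}. The same estimate, with constants uniform in $s$, together with Schauder estimates gives $C^{0,\beta}_{loc}$ bounds. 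Up to a subsequence, $w_k\to w$ locally uniformly, and $w$ solves the linearized critical equation
\[
(-\Delta)^{s_0}w=(p-1)Q_{s_0}^{p-2}w,\qquad |w|\le C(1+|x|)^{-(n-2s_0)}.
\]
By the known nondegeneracy of $Q_{s_0}$ for the fractional critical problem (Dávila--del Pino--Sire), we get
\[
w=b_0\Big(\frac{n-2s_0}{2}Q_{s_0}+x\cdot\nabla Q_{s_0}\Big)+\sum_{j=1}^n b_j\partial_jQ_{s_0}.
\]

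\emph{Step 2 (killing the kernel via Pohozaev identities).} This is the main obstacle. For the translation modes, we use the local Pohozaev identity obtained by testing with $\partial_j u$, written through the Caffarelli--Silvestre extension on a ball $B_\delta(x_0)$ in the original variables:
\[
\int_{B_\delta}\partial_jV\,u_{s,i}^2=\text{boundary terms}.
\]
We subtract the identities for $i=1,2$ and divide by $\|u_1-u_2\|_\infty$. Using (V3), the expansion $\partial_jV=mA|x-x_0|^{m-2}(x-x_0)_j+O(|x-x_0|^m)$, and $(x_s-x_0)\mu_s^{-1}\to0$, the leading term becomes
\[
\eta^{m-1}mA\int |y|^{m-2}y_j\,2Q_{s_0}w\,dy.
\]
The boundary terms are of higher order thanks to the decay \eqref{decay234}, and this is where $m<n-4s_0$ is needed for convergence. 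Inserting the kernel form of $w$, the dilation and even parts integrate to zero by symmetry, and we are left with $\sum_l b_l\int|y|^{m-2}y_jQ_{s_0}\partial_lQ_{s_0}$, a positive multiple of $b_j$. Hence $b_j=0$.

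For $b_0$, we use the dilation Pohozaev identity, testing with $(x-x_0)\cdot\nabla u$. Combined with (V2), (V3) and the identity \eqref{keyequality}, it relates $(s-s_0)$, $\eta_s^{2s}V(x_0)$ and $\int Q_{s_0}^2$. The difference quotient then yields $b_0\int Q_{s_0}\big(\frac{n-2s_0}{2}Q_{s_0}+x\cdot\nabla Q_{s_0}\big)\neq0$ times a nonzero constant, which forces $b_0=0$; the scalar mismatch comes from the fact that both solutions share the same $\eta_s$ and the same $s$. An alternative route, if this computation is delicate, is to use the $L^2$ (mass-type) identity: multiply the $w_k$-equation by the $s$-derivative direction. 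Either way we conclude $w\equiv0$. The care needed here is in matching orders: $\eta_s^{2s}\sim s-s_0$ must dominate every error term, and $n\ge5$ ensures $Q_{s_0}\in L^2$, so that the dilation identity terms are finite.

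\emph{Step 3 (conclusion).} From $w\equiv0$ we get $w_k\to0$ on compact sets, and \eqref{decay234} makes the tails small uniformly, so $\|w_k\|_\infty\to0$. This contradicts $\|w_k\|_\infty=1$.
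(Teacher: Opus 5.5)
Your architecture matches the paper's. You rescale both ground states at $x_0$ with $\eta_s$, normalize the difference, get uniform decay from Kato's inequality and the Kelvin transform, pass to the linearized critical equation, use nondegeneracy of $Q_{s_0}$, and kill the kernel with Pohozaev identities.

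\textbf{Translation modes: a different route.} The paper uses no local identity and no Caffarelli--Silvestre boundary terms. Testing the rescaled equation with $\partial_{x_j}v_{k,i}$ on all of $\mathbb{R}^n$ gives the exact relation $\int_{\mathbb{R}^n}\partial_{x_j}\big(V(\eta_{s_k}x+x_0)\big)v_{k,i}^2\,dx=0$. Subtracting for $i=1,2$, dividing by $\|w_k\|_\infty$, and using (V3) with the decay \eqref{decay234} yields $\int|x|^{m-2}x_jQ_{s_0}w\,dx=0$. This is $b_j\int|x|^{m-3}x_j^2Q_{s_0}Q_{s_0}'\,dx=0$, a \emph{negative} multiple of $b_j$, which is harmless. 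Your local version on $B_\delta(x_0)$ has the merit of using $V$ only near $x_0$; the global identity and its limit tacitly need control of $\nabla V$ away from $x_0$. But it costs estimates you only assert: decay of the extensions of $v_{k,i}$ and $w_k$ \emph{and of their weighted gradients} on the curved part of the half-ball in $\mathbb{R}^{n+1}_+$. These do not follow from the pointwise bound \eqref{decay234} alone. Once they are supplied, the power counting closes under $m<n-4s_0$, so your route is heavier but legitimate.

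\textbf{The step you have not justified is $b_0=0$.} Subtract the two global dilation identities and divide by $\|w_k\|_\infty$. The term coming from $\int v_{k,i}^p$ carries the factor $(s_k-s_0)\eta_{s_k}^{-2s_k}$, which by the definition of $\eta_s$ equals $A_{n,s_0}V(x_0)>0$. So it is of the same order as $\int V(\eta_{s_k}x+x_0)(v_{k,1}+v_{k,2})w_k$, not an error term. The $\eta_{s_k}x\cdot\nabla V$ term vanishes in the limit, and the translation modes drop out by radial symmetry. The limit is therefore $c_1\int Q_{s_0}w\,dx=c_2\int Q_{s_0}^{p-1}w\,dx$ with $c_1,c_2>0$. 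Hence the coefficient of $b_0$ is $c_1\int Q_{s_0}Z_0-c_2\int Q_{s_0}^{p-1}Z_0$, where $Z_0=\frac{n-2s_0}{2}Q_{s_0}+x\cdot\nabla Q_{s_0}$. Two computations, both made in the paper, show this is nonzero:
\begin{itemize}
\item $\int Q_{s_0}^{p-1}Z_0\,dx=0$, exactly because $p=2^*_{s_0}$ (integrate $\frac1p\,x\cdot\nabla Q_{s_0}^p$ by parts);
\item $\int Q_{s_0}Z_0\,dx=-s_0\int Q_{s_0}^2\,dx\neq0$.
\end{itemize}
Your reason, ``both solutions share the same $\eta_s$ and $s$'', does not replace these. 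Drop the fallback via an ``$s$-derivative direction'': both solutions solve $(P_s)$ with the same $s$, so it has no meaning here. Also, $Q_{s_0}\in L^2$ only needs $n>4s_0$, which already holds for $n=4$. What $n\geq5$ buys is that $(1,n-4s_0)$ is nonempty for every $s_0<1$.

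\textbf{A caution in Step 1, which the paper also passes over.} A Moser-type sup bound on $\hat w_k$ needs a uniformly bounded input norm. Lemma \ref{decay3} assumes $\|z_s\|_{2^*_s}\le C$, which is not automatic for $|w_k|$ because you divide by $\|v_{k,1}-v_{k,2}\|_\infty\to0$. From $|w_k|\le1$ you only control $\hat w_k$ in $L^r(B_1)$ for $r<\frac{n}{n-2s}$. You can fix this in either of two ways:
\begin{itemize}
\item use a local sup bound valid for small exponents; or
\item get the decay directly by comparison, $|w_k(x)|\le C\int_{\mathbb{R}^n}|x-y|^{2s_k-n}(1+|y|)^{-(p-2)(n-2s_k)}|w_k(y)|\,dy$, bootstrapping from $|w_k|\le1$. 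Each step gains roughly $2s_0$ in the decay exponent.
\end{itemize}
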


\bigskip

The assumption $(V3)$ in Theorem \ref{thm:1.3} can be reflexed to\\
$(V4)$ $V(x)$  has  $l$ minimum point $x_i$ and satisfies
\[
\begin{cases}
V(x)=V(x_i)+A_i|x-x_i|^{m_i}+O(|x-x_i|^{m_i+1}),\quad x\in B_\tau(x_i);\\
\nabla V(x)=m_iA_i|x-x_i|^{m_i-2}(x-x_i)+O(|x-x_i|^{m}),\quad x\in B_\tau(x_i),
\end{cases}
\]
where $\tau_i$ are  small constants, $m_i\in(1, n-4s_0)$, $A_i>0$, $m_0>m_1\geq m_2\geq \cdots\geq m_l>1$, and $i=1,2,\cdots, l$.

Under the assumption $(V4)$, the concentration  point  is the flattest minimum point.

\bigskip

This paper is organized as  follows. In section 2, we reprove some elementary weight inequalities in order to get constants involved independent of $s$.
It is shown in section 3  a  Harnack inequality and  Schauder estimates for  fractional Laplacian  with the universal constants independent  $s$.    In section 4, we study  the  asymptotic behaviour of ground state solutions as $s\uparrow1$, while in section 5, we prove that any ground state solution blows up in $L^\infty(\mathbb{R}^n)$ as $s\downarrow s_0$,  and study the  asymptotic behavior of the solutions as $s\downarrow s_0$. In Section 6, we determine  the blow up rate and locate the limit point of $x_s$,  then  we prove Theorem \ref{thm:1.1} and  Corollary \ref{cor:1.1a}. In Section 7, we are devoted to prove Theorem \ref{thm:1.3}. In the Appendix, we prove that  if $0<s\leq s_0$ and  $\inf_{x\in \mathbb{R}^n}V>0,\, V\in L^\infty(\mathbb{R}^n)$, then $S_V(n,s)$ is not achieved.

In the sequel, we denote   $\|\cdot\|_{L^p(\mathbb{R}^n)}$ by $\|\cdot\|_p$ for simplicity,  and use $C$ to represent positive constants, which is independent of $s$.

\bigskip

\bigskip

\section{Preliminary results}

\bigskip

In this section, we  reprove some elementary weight inequalities so as to getting uniform estimates in $s$. We remark that our main concern is to show that the universal constants in these inequalities are independent of $s$.

First we recall the Caffarelli--Silvestre extension  of the fractional Laplacian.

Denote by  $X=(x,y)$  a  point of $\mathbb{R}_+^{n+1}$, where $x\in \mathbb{R}^{n}$ and $y>0$.
Let $D$ be an open set in $\mathbb{R}_+^{n+1}$.  For $s\in (0,1)$,
we say $U\in L^2(D,|y|^{1-2s})$, if
\[
\int_{D}|y|^{1-2s}| U(x,y)|^2\,dX<+\infty.
\]

We define the space $H(D,|y|^{1-2s})$ by
\[
H(D,|y|^{1-2s})=\{U(x,y):U\in L^2(D,|y|^{1-2s}) \text{ and } |\nabla U|\in L^2(D,|y|^{1-2s})\},
\]
and  the spaces $X^s(\mathbb{R}_+^{n+1})$  and $\dot{H}^s(\mathbb{R}^{n})$ as the completion of $C_0^\infty(\overline{\mathbb{R}_+^{n+1}})$ and $C_0^\infty(\mathbb{R}^{n})$, under the norms
\[
\|U\|_{X^s(\mathbb{R}_+^{n+1})}^2=\Big(\int_{\mathbb{R}_+^{n+1}}y^{1-2s}|\nabla U|^2\,dX\Big)^{\frac12}.
\]
and
\[
\|u\|_{\dot{H}^s(\mathbb{R}^{n})}
=\Big(\int_{\mathbb{R}^{n}}|2\pi\xi|^{2s}|\mathcal{F}(u)(\xi)|^2\,d\xi\Big)^{\frac12}
=\Big(\int_{\mathbb{R}^{n}}|(-\Delta)^{\frac{s}{2}}u|^2\,dx\Big)^{\frac12},
\]
respectively.

For $u\in \dot{H}^s(\mathbb{R}^n)$,   the function  $U=E_{2s}(u)$  satisfying
\begin{equation*}
\begin{cases}
div(y^{1-2s}\nabla U(x,y)=0, \text{  }(x,y)\in\mathbb{R}_+^{n+1},\\
U(x,0)=u(x),\text{  }x\in\mathbb{R}^{n}
\end{cases}
\end{equation*}
is referred to   the $2s$-harmonic extension of $u$.
By \cite{BCP, CS0}, the fraction Laplacian can be realized by
\[
(-\Delta)^su(x)=-\kappa_s\lim_{y\to 0^+}y^{1-2s}\frac{\partial U}{\partial y}(x,y),\text{  }x\in\mathbb{R}^{n}
\]
with
\begin{equation}\label{kappa}
\kappa_s=\frac{2^{1-2s}\Gamma(1-s)}{\Gamma(s)}.
\end{equation}

Next,  we recall the $A_2$ Muckenhoupt function.

We say a function  $\omega\in A_2$ if  there exists a constant $C_\omega$ such that, for any ball $B\subset \mathbb{R}^{n+1}$,
\begin{equation}\label{a2}
\Bigg(\frac{1}{|B|}\int_B\omega \,dX\Bigg)\Bigg(\frac{1}{|B|}\int_B\omega^{-1}\, dX\Bigg)\leq C_\omega.
\end{equation}

The following Lemma not only show $|y|^{1-2s}\in A_2$ but also that there exists a positive constant  $C_\omega$, independent of $s$, such that \eqref{a2} holds with  $\omega=|y|^{1-2s}$ for any $s\in [s_1,s_2]\subset(0,1)$.
\begin{lemma}\label{wsi1}
Let  $0<s_1\leq s\leq s_2<1$. There exists a positive contant $C_0>0$, depending only on $n,s_1,s_2$  such that,
for any ball $B\subset \mathbb{R}^{n+1}$,
\begin{equation}\label{wsi1-0}
\Bigg(\frac{1}{|B|}\int_B|y|^{1-2s} \,dX\Bigg)\Bigg(\frac{1}{|B|}\int_B|y|^{-(1-2s)}\, dX\Bigg)\leq C_0.
\end{equation}
\end{lemma}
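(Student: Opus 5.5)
The weight $|y|^{1-2s}$ depends only on the last coordinate, and the integrand in \eqref{wsi1-0} is invariant under translations in $x$ and under dilations. So the plan is to reduce to a one-dimensional computation with explicit bounds. Write $a=1-2s$. Since $0<s_1\le s\le s_2<1$, we have $|a|\le a_0:=\max\{|1-2s_1|,|1-2s_2|\}<1$. A dilation $X\mapsto rX$ multiplies $\frac{1}{|B|}\int_B|y|^{a}$ by $r^{a}$ and $\frac{1}{|B|}\int_B|y|^{-a}$ by $r^{-a}$, so the product is scale invariant. We may therefore assume the ball $B$ has radius $1$ and center $(x_0,t)$ with $t\ge0$; the case $t<0$ follows by the symmetry $y\mapsto -y$.

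The first step is to replace the ball by a cylinder. We have $B\subset Q:=B'_1(x_0)\times(t-1,t+1)$, where $B'_1$ is the unit ball in $\mathbb{R}^n$, and $|Q|/|B|=c_n$ is a fixed dimensional constant. Both integrands are nonnegative, so each average over $B$ is at most $c_n$ times the corresponding average over $Q$. The $x$-integration then factors out, and it suffices to bound
\[
\Big(\frac12\int_{t-1}^{t+1}|y|^{a}\,dy\Big)\Big(\frac12\int_{t-1}^{t+1}|y|^{-a}\,dy\Big)
\]
uniformly in $t\ge0$ and $|a|\le a_0$.

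We then split into two cases.

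\emph{Case $t\ge2$.} Here $y\in[t-1,t+1]$ satisfies $t/3\le y\le 3t/2$, hence $y^{a}\le 3^{|a|}t^{a}$ and $y^{-a}\le 3^{|a|}t^{-a}$. The product is at most $9^{a_0}\le 9$.

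\emph{Case $0\le t<2$.} Here $[t-1,t+1]\subset[-3,3]$. For $|b|<1$,
\[
\int_{-3}^{3}|y|^{b}\,dy=\frac{2\cdot3^{1+b}}{1+b}\le\frac{2\cdot 9}{1-a_0}.
\]
Applying this with $b=a$ and $b=-a$ gives a product bounded by $\big(9/(1-a_0)\big)^2$.

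Combining the two cases, we obtain \eqref{wsi1-0} with $C_0=c_n^2\max\{9,\,81(1-a_0)^{-2}\}$, which depends only on $n,s_1,s_2$.

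The only delicate point is the small-$t$ case, where the singularity $|y|^{-|a|}$ near $y=0$ must be integrable with a bound that does not blow up. This is exactly where $s$ staying away from $0$ and $1$ is used, through the factor $1/(1-a_0)$. Everything else is elementary.
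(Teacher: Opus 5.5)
Your proof is correct and supplies exactly the direct verification that the paper's proof (``It can be verified directly'') omits. You rescale to a unit ball, enclose it in a cylinder $B'_1(x_0)\times(t-1,t+1)$, and split into the cases $t\ge2$ and $0\le t<2$; this gives a constant depending only on $n$ and $a_0=\max\{|1-2s_1|,|1-2s_2|\}<1$, as required.
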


\begin{proof} It can be verified directly.
\end{proof}

\bigskip

Denote by $d\mu=|y|^{1-2s}dX$  a measure in $\mathbb{R}^{n+1}$, which is defined  for any set $E\in \mathbb{R}^{n+1}$ by

\begin{equation*}
\mu(E)=\int_{E}d\mu=\int_{E}|y|^{1-2s}dX.
\end{equation*}
Hence, $\mu$ is a  positive and finite measure.
Given a function $f\in L^1_{loc}(\mathbb{R}^{n+1};\mu)$, we denote
\[f_E=\frac 1{\mu(E)}\int_Ef\,d\mu.
\]

For $R>0$, let $Q_R=B_R\times (0,R)$ and $\partial'Q_R=B_R\times \{0\}$, where $B_R=B_R(y)$ is a ball in $\mathbb{R}^n$  with the radius $R$ and the center at $y\in\mathbb{R}^n$.

We have the following doubling lemma. The proof is standard, we omit it.
\begin{lemma}\label{doubling}Let  $0<s_1\leq s\leq s_2<1$.
For any ball $\mathbf{B}_r\subset \mathbb{R}^{n+1}$, it holds that
\begin{equation*}
\mu(\mathbf{B}_{2r})\leq 2^{2n+2}C_0\,\mu(\mathbf{B}_r),
\end{equation*}
where $C_0$ is the same as that  in  \eqref{wsi1-0}.
\end{lemma}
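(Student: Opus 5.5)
The doubling estimate $\mu(\mathbf{B}_{2r})\le 2^{2n+2}C_0\,\mu(\mathbf{B}_r)$ should follow directly from the $A_2$ bound of Lemma \ref{wsi1}, via the Cauchy--Schwarz inequality. The plan is the classical argument showing that $A_2$ weights are doubling. The constant $C_0$ from \eqref{wsi1-0} is uniform for $s\in[s_1,s_2]$, and no other constant will enter, so the doubling constant will be uniform in $s$ as well.

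Fix a ball $\mathbf{B}=\mathbf{B}_{2r}\subset\mathbb{R}^{n+1}$ and a measurable subset $E\subset \mathbf{B}$; we will take $E=\mathbf{B}_r$ with the same center. Write $\omega=|y|^{1-2s}$ and split $1=\omega^{1/2}\omega^{-1/2}$. Cauchy--Schwarz gives
\[
|E|^2=\Big(\int_E \omega^{1/2}\omega^{-1/2}\,dX\Big)^2\le \mu(E)\int_E\omega^{-1}\,dX\le \mu(E)\int_{\mathbf{B}}\omega^{-1}\,dX.
\]
Applying \eqref{wsi1-0} on $\mathbf{B}$ yields
\[
\int_{\mathbf{B}}\omega^{-1}\,dX\le C_0\,\frac{|\mathbf{B}|^2}{\mu(\mathbf{B})}.
\]
Combining the two estimates gives
\[
\frac{\mu(\mathbf{B})}{\mu(E)}\le C_0\Big(\frac{|\mathbf{B}|}{|E|}\Big)^2 .
\]

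With $E=\mathbf{B}_r$ and $\mathbf{B}=\mathbf{B}_{2r}$, the Lebesgue measures in $\mathbb{R}^{n+1}$ satisfy $|\mathbf{B}_{2r}|/|\mathbf{B}_r|=2^{n+1}$. Squaring gives $2^{2n+2}$, hence $\mu(\mathbf{B}_{2r})\le 2^{2n+2}C_0\,\mu(\mathbf{B}_r)$, which is exactly the claim.

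Before running this we must check that $0<\mu(\mathbf{B}_r)<\infty$ and that $\int_{\mathbf{B}}\omega^{-1}\,dX<\infty$. Both hold because the exponents $1-2s$ and $-(1-2s)$ lie in $(-1,1)$ for $s\in(0,1)$, so $\omega$ and $\omega^{-1}$ are locally integrable across $\{y=0\}$; moreover $\omega>0$ almost everywhere. The balls are taken in the whole space $\mathbb{R}^{n+1}$ with weight $|y|^{1-2s}$, matching the setting of Lemma \ref{wsi1}. I expect no real obstacle here. The only substantive input is the uniformity of $C_0$ in $s$, which Lemma \ref{wsi1} already supplies.
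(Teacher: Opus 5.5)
Your argument is correct, and it is exactly the standard ``$A_2$ weights are doubling'' proof that the paper omits, calling it standard. The constant your Cauchy--Schwarz step produces, $C_0\bigl(|\mathbf{B}_{2r}|/|\mathbf{B}_r|\bigr)^2=2^{2n+2}C_0$, is precisely the one in the statement, which confirms this is the intended proof; your local-integrability checks for $|y|^{\pm(1-2s)}$ are also the right ones to make.
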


\bigskip

Now, we introduce  relevant weighted inequalities.

\begin{lemma}\label{weighted Sobolev inequality}(Weighted Sobolev inequality)
Let  $n\geq2$ and $0<s_1\leq s\leq s_2<1$.  For any $U\in X^s(\mathbb{R}_+^{n+1})$, there exists $C=C(n,s)>0$ which is uniformly bounded from above for $s\in[s_1,s_2]$,  such that
\begin{equation}\label{sobolev}
\Big(\int_0^{+\infty}\int_{\mathbb{R}^n}|U|^{2\gamma}y^{1-2s}\,dX\Big)^{\frac1{2\gamma}}\leq C\Big(\int_0^{+\infty}\int_{\mathbb{R}^n}|\nabla U|^{2}y^{1-2s}\,dX\Big)^{\frac1{2}},
\end{equation}
where $\gamma=1+\frac{2}{n-2s}$. Let
$$C_{m}=\sup_{s\in[s_1,s_2]}C(n,s)>0,
$$
 then \eqref{sobolev} holds with $C_m$ instead of $C$.
\end{lemma}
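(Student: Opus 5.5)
A natural approach is to deduce the weighted Sobolev inequality \eqref{sobolev} from a weighted trace-free Sobolev inequality in all of $\mathbb{R}^{n+1}$ with the weight $\omega=|y|^{1-2s}$. Then the only work is to track how the constants depend on $s$.

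\emph{Reduction to $\mathbb{R}^{n+1}$.} For $U\in C_0^\infty(\overline{\mathbb{R}^{n+1}_+})$ I will use the even reflection $\tilde U(x,y)=U(x,|y|)$. It is Lipschitz with compact support in $\mathbb{R}^{n+1}$. Both sides of \eqref{sobolev} for $\tilde U$ with the weight $|y|^{1-2s}$ are exactly twice the corresponding half-space integrals. It therefore suffices to prove
\[
\Big(\int_{\mathbb{R}^{n+1}}|\tilde U|^{2\gamma}|y|^{1-2s}dX\Big)^{\frac1{2\gamma}}\le C\Big(\int_{\mathbb{R}^{n+1}}|\nabla \tilde U|^2|y|^{1-2s}dX\Big)^{\frac12}
\]
for compactly supported Lipschitz functions. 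The general case $U\in X^s(\mathbb{R}^{n+1}_+)$ then follows by density and Fatou's lemma.

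\emph{Potential estimate.} I start from the pointwise bound $|\tilde U(X)|\le c_n\int_{\mathbb{R}^{n+1}}|\nabla\tilde U(Z)|\,|X-Z|^{-n}dZ$, valid for compactly supported Lipschitz functions. The task becomes bounding the fractional integral $I_1 f=|\cdot|^{-n}*f$ from $L^2(\omega)$ to $L^{2\gamma}(\omega)$, applied to $f=|\nabla \tilde U|$. For this I will use the Fabes--Kenig--Serapioni weighted Sobolev theorem for $A_2$ weights in its two-weight form. The relevant quantity is the balance condition
\[
\frac{r}{R}\Big(\frac{\mu(\mathbf B_r)}{\mu(\mathbf B_R)}\Big)^{1/(2\gamma)}\le C\Big(\frac{\mu(\mathbf B_r)}{\mu(\mathbf B_R)}\Big)^{1/2},\qquad \mathbf B_r\subset\mathbf B_R .
\]
Together with the $A_2$ constant, this condition yields the inequality with a constant depending only on $n$, $C_0$ and the balance constant. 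That theorem itself goes through a Hedberg-type splitting of $I_1$ into a near part, controlled by the weighted maximal function, and a far part, controlled by the balance condition.

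\emph{Checking the hypotheses uniformly in $s$.} This step is where the uniformity is obtained.
\begin{itemize}
\item The $A_2$ constant is controlled by Lemma \ref{wsi1}, which gives $C_0$ uniform on $[s_1,s_2]$.
\item The doubling constant is controlled by Lemma \ref{doubling}.
\item The weighted maximal function is bounded on $L^2(\mu)$ with norm depending only on the doubling constant and $C_0$, by Muckenhoupt's theorem with explicit constant dependence.
\item For the balance condition, I compute $\mu(\mathbf B_r(X_0))\approx r^{n+1}\max(r,|y_0|)^{1-2s}$, with comparability constants continuous in $s$ on $[s_1,s_2]$. Hence for $r<R$ the ratio $\mu(\mathbf B_r)/\mu(\mathbf B_R)$ is at least $c\,(r/R)^{n+2-2s}$. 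The choice $2\gamma=\frac{2(n+2-2s)}{n-2s}$ is exactly the exponent for which $\frac1{2}-\frac1{2\gamma}=\frac1{n+2-2s}$, so the balance condition holds with a constant bounded on $[s_1,s_2]$.
\end{itemize}

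The main obstacle I expect is making the constant dependence in the Fabes--Kenig--Serapioni / maximal function argument fully explicit. Those results are usually stated as "depending only on the $A_2$ constant," and I will need to verify that this is precisely what the proof gives. With that settled, $C(n,s)$ is bounded by a continuous function of $s$ on $[s_1,s_2]$, so $C_m=\sup_{s\in[s_1,s_2]}C(n,s)<\infty$, and \eqref{sobolev} holds with $C_m$ in place of $C$.
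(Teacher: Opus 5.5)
\textbf{There is a genuine gap: the balance-condition check fails for $s>\tfrac12$, and in that range the inequality itself is false.}

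Your own formula $\mu(\mathbf B_r(X_0))\approx r^{n+1}\max(r,|y_0|)^{1-2s}$ shows the failure. If $1-2s<0$ and $\mathbf B_r(X_0)\subset\mathbf B_R(X_0)$ with $|y_0|\ge R$, then $\mu(\mathbf B_r)/\mu(\mathbf B_R)\approx (r/R)^{n+1}$. Since $(r/R)^{n+1}/(r/R)^{n+2-2s}=(r/R)^{2s-1}\to0$ as $r/R\to0$, no uniform $c$ exists. The bound $\mu(\mathbf B_r)/\mu(\mathbf B_R)\ge c\,(r/R)^{n+2-2s}$ holds only for $s\le\tfrac12$. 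In that case, for $\mathbf B_r(X_0)\subset\mathbf B_R(X_1)$ one has $\max(r,|y_0|)\ge \tfrac{r}{2R}\max(R,|y_1|)$, and the exponent $1-2s$ is nonnegative. For $s>\tfrac12$ the sharp exponent is $n+1$. It corresponds to the Sobolev exponent $\tfrac{2(n+1)}{n-1}$, which is strictly smaller than $2\gamma$ exactly when $s>\tfrac12$.

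This cannot be repaired, because for $s>\tfrac12$ the inequality \eqref{sobolev} is false on $X^s(\mathbb{R}^{n+1}_+)$. Take $0\ne\phi\in C_c^\infty(B_1)$ with $B_1\subset\mathbb{R}^{n+1}$, and set $U_d(x,y)=\phi(x,y-d)$ for $d\ge2$. On the support, $\tfrac23 d^{1-2s}\le y^{1-2s}\le 2d^{1-2s}$. Hence
\[
\frac{\big(\int_{\mathbb{R}^{n+1}_+}y^{1-2s}|U_d|^{2\gamma}\,dX\big)^{\frac1{2\gamma}}}{\big(\int_{\mathbb{R}^{n+1}_+}y^{1-2s}|\nabla U_d|^{2}\,dX\big)^{\frac12}}
\ \ge\ c\, d^{(1-2s)\left(\frac1{2\gamma}-\frac12\right)}\frac{\|\phi\|_{L^{2\gamma}(\mathbb{R}^{n+1})}}{\|\nabla\phi\|_{L^{2}(\mathbb{R}^{n+1})}}
= c\, d^{\frac{2s-1}{n+2-2s}}\frac{\|\phi\|_{L^{2\gamma}(\mathbb{R}^{n+1})}}{\|\nabla\phi\|_{L^{2}(\mathbb{R}^{n+1})}}\longrightarrow\infty
\]
as $d\to\infty$. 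The paper's argument for $s\in(\tfrac12,1)$ has the same defect. It quotes the local Fabes--Kenig--Serapioni inequality and asserts a constant independent of $R$, which this example rules out.

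What your route does establish is \eqref{sobolev} for $s\in[s_1,s_2]\cap(0,\tfrac12]$, with constants depending only on $n,s_1,s_2$ through the $A_2$, doubling and balance constants. That is a legitimate alternative to the paper's appeal to the explicit Bakry/Nguyen constant. For $s>\tfrac12$, the exponent-$(n+1)$ balance condition gives only a local inequality: for example, for $U$ supported in $Q_1\cup\partial'Q_1$, with $\tfrac{n+1}{n-1}$ in place of $\gamma$ and constants still uniform in $s$. That weaker statement is what the Moser iteration in Lemma~\ref{harnack} could use instead.
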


\begin{proof}
If $s\in [s_1,s_2]\cap (0,\frac12)$,  the inequality \eqref{sobolev} was proved in \cite{Bakry,Nguyen}, and the constant
\begin{equation*}
C=C(n,s)=\Big(\frac1{\pi(n+2-2s)(n-2s)}\Big)^{\frac12}\Big[\frac{2\pi^{1-s}\Gamma(n+2-2s)}{\Gamma(1-s)\Gamma(\frac{n+2-2s}2)}\Big]
\end{equation*}
is uniformly bounded in $[s_1,s_2]$.

The case $s=\frac12$ corresponds to the classical Sobolev inequality. Now we deal with the case $s\in [s_1,s_2]\cap (\frac12,1)$.

For any $R>0$ and $U\in C_c^1(\mathbf{B}_R)$,  By Lemma \ref{wsi1}, $|y|^{1-2s}\in A_2$, so we may deduce by Theorem 1.2 in \cite{Fabes1982} as (3.1.6) in \cite{DMV} that
\begin{equation*}
\Big(\int_{\mathbf{B}_R}|y|^{1-2s}|U|^{2\gamma}\,dX\Big)^{\frac1{2\gamma}}\leq C_1\Big(\int_{\mathbf{B}_R}|y|^{1-2s}|\nabla U|^{2}\,dX\Big)^{\frac1{2}},
\end{equation*}
where $\gamma=1+\frac{2}{n-2s}$, and $C_1=C_1(n,C_0)>0$ is a constant independent of $R$ and $s$. This implies that  \eqref{sobolev} is valid for $U\in C_c^1(\mathbf{B}_R)$.
The density argument then completes the proof.
\end{proof}

The following trace inequality can be found in \cite{BCP}.
\begin{lemma}\label{weighted Sobolev inequality2}
Let $0<s<1$ and $n\geq 2$. Then
\[
\|U(\cdot,0)\|_{2^*_s}^2\leq S_1\int_{\mathbb{R}_+^{n+1}}|\nabla U|^2y^{1-2s}\,dX
\]
for all $U\in X^s(\mathbb{R}_+^{n+1})$, where
\[
S_1(s,n)=\frac
{\Gamma(s)\Gamma\Big(\frac{n-2s}{2}\Big)\Gamma(n)^{\frac{2s}{n}}}
{2\pi^{\frac{2s}{n}}\Gamma\Big(\frac{n+2s}{2}\Big)\Gamma(\frac{n}{2})^{\frac{2s}{n}}}.
\]
\end{lemma}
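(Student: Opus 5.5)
The statement is taken from \cite{BCP}. The plan is to derive it from the sharp fractional Sobolev inequality \eqref{eq:1.7}--\eqref{eq:1.8} together with the energy identity for the $2s$-harmonic extension. The idea is that, among all extensions with a given trace, the $2s$-harmonic one minimizes the weighted Dirichlet energy. That minimal energy is a fixed multiple of $\|(-\Delta)^{s/2}u\|_2^2$.

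\textbf{Step 1: reduce to smooth data.} By density it suffices to treat $U\in C_0^\infty(\overline{\mathbb{R}_+^{n+1}})$. Set $u=U(\cdot,0)$, which is smooth with compact support, and let $E_{2s}(u)$ be its $2s$-harmonic extension.

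\textbf{Step 2: compare $U$ with the harmonic extension.} Write $W=U-E_{2s}(u)$, so $W(\cdot,0)=0$. Integrate by parts against $\operatorname{div}(y^{1-2s}\nabla E_{2s}(u))=0$. The boundary term vanishes because $W$ has zero trace, so the cross term drops out. This gives
\[
\int_{\mathbb{R}_+^{n+1}}y^{1-2s}|\nabla U|^2\,dX\ \ge\ \int_{\mathbb{R}_+^{n+1}}y^{1-2s}|\nabla E_{2s}(u)|^2\,dX .
\]
The decay of $E_{2s}(u)$ at infinity, coming from the Poisson kernel $P_s(x,y)=c\,y^{2s}(|x|^2+y^2)^{-(n+2s)/2}$, justifies discarding the terms at infinity.

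\textbf{Step 3: evaluate the minimal energy.} Integrate by parts once more and use the Neumann realization $(-\Delta)^su=-\kappa_s\lim_{y\to0}y^{1-2s}\partial_yE_{2s}(u)$. This yields
\[
\int y^{1-2s}|\nabla E_{2s}(u)|^2\,dX=\kappa_s^{-1}\int_{\mathbb{R}^n}u\,(-\Delta)^su\,dx=\kappa_s^{-1}\|(-\Delta)^{s/2}u\|_2^2 .
\]
Alternatively, one can Fourier transform in $x$. Then $\hat U(\xi,y)=\hat u(\xi)\phi(2\pi|\xi|y)$, where $\phi$ is the normalized Bessel-$K_s$ profile, and the constant reduces to the one-dimensional integral $\int_0^\infty t^{1-2s}(\phi'^2+\phi^2)\,dt$. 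This integral equals $\kappa_s^{-1}$ after the same integration by parts.

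\textbf{Step 4: apply sharp Sobolev.} By \eqref{eq:1.7}, $\|u\|_{2^*_s}^2\le S(n,s)^{-1}\|(-\Delta)^{s/2}u\|_2^2$. Combining this with Steps 2 and 3 gives the inequality with constant $S_1=\kappa_s/S(n,s)$.

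The main obstacle is the bookkeeping of constants, namely checking that $\kappa_s/S(n,s)$ equals the displayed $S_1(s,n)$. Substituting \eqref{kappa} and \eqref{eq:1.8} gives
\[
\frac{\kappa_s}{S(n,s)}=\frac{2^{1-4s}\pi^{-s}\,\Gamma(1-s)\,\Gamma\!\big(\tfrac{n-2s}{2}\big)}{\Gamma(s)\,\Gamma\!\big(\tfrac{n+2s}{2}\big)} .
\]
This differs in form from the paper's expression. The discrepancy has two likely sources:
\begin{itemize}
\item different normalizations of the Fourier transform;
\item the alternative form of the sharp constant $S(n,s)$ with the factor $(\Gamma(n)/\Gamma(n/2))^{2s/n}$, which appears when the Sobolev inequality is written with the $|\xi|^{2s}$ convention, as in \cite{BCP}.
\end{itemize}
I would fix the conventions of \cite{BCP} (Fourier symbol $|\xi|^{2s}$, extension constant $\kappa_s$ as there) and recompute. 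The sharp Lieb constant $\pi^{s}\frac{\Gamma((n-2s)/2)}{\Gamma((n+2s)/2)}\big(\frac{\Gamma(n)}{\Gamma(n/2)}\big)^{2s/n}$ times the extension constant $\frac{\Gamma(s)}{2\Gamma(1-s)}$ should reproduce the stated $S_1$. For the purposes of this paper, only the finiteness of $S_1$ and its continuity in $s$ on compact subintervals of $(0,1)$ matter, and both are evident from the Gamma-function expression.
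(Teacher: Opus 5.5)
The paper gives no argument here beyond citing \cite{BCP}. Your route (minimality of the $2s$-harmonic extension plus the sharp Sobolev inequality) is the standard one behind that citation, and Steps 1--2 are fine.

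\textbf{The gap is the constant.} The statement asserts the specific constant $S_1$, and you never prove the inequality with it. You compute $\kappa_s/S(n,s)$, see that it disagrees with $S_1$, and assert that fixing conventions ``should reproduce the stated $S_1$''. That assertion is false, and your intermediate values are wrong as well.
\begin{enumerate}
\item In Step~3, your own normalized profile satisfies $\phi(t)=1-\frac{2^{-2s}\Gamma(1-s)}{\Gamma(1+s)}\,t^{2s}+\cdots$. Hence $\int_0^\infty t^{1-2s}(\phi'^2+\phi^2)\,dt=-\lim_{t\to0}t^{1-2s}\phi'(t)=\frac{2^{1-2s}\Gamma(1-s)}{\Gamma(s)}=\kappa_s$, not $\kappa_s^{-1}$. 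The Neumann relation stated just before \eqref{kappa} is inverted; the correct one is $-\lim_{y\to0^+}y^{1-2s}\partial_yU=\kappa_s(-\Delta)^su$. So the extension energy equals $\kappa_s\|(-\Delta)^{s/2}u\|_2^2$.
\item In Step~4, \eqref{eq:1.8} omits the factor $(\Gamma(\frac n2)/\Gamma(n))^{2s/n}$. You can check this at $s=1$ against the Aubin--Talenti constant $\pi n(n-2)(\Gamma(\frac n2)/\Gamma(n))^{2/n}$.
\item Consequently your displayed value is not even an admissible constant. At $s=\frac12$ with $n\ge3$ (where $\kappa_{1/2}=1$), it is smaller than the sharp constant by the factor $(\Gamma(\frac n2)/\Gamma(n))^{1/n}<1$, so the inequality with it is false.
\item Your proposed recomputation is also off. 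In Lieb's normalization the extension factor is $\Gamma(s)/(2\pi^{2s}\Gamma(1-s))$, not $\Gamma(s)/(2\Gamma(1-s))$.
\end{enumerate}

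\textbf{How to close it.} With the correct values, your Steps 2--4 give the sharp trace constant
\[
S^*=\frac{\Gamma(s)\,\Gamma\big(\frac{n-2s}{2}\big)\,\Gamma(n)^{\frac{2s}{n}}}{2\pi^{s}\,\Gamma(1-s)\,\Gamma\big(\frac{n+2s}{2}\big)\,\Gamma(\frac n2)^{\frac{2s}{n}}},
\]
which is attained by the extension of a bubble. This is \emph{not} $S_1$. The displayed $S_1$ has $\pi^{2s/n}$ in place of $\pi^{s}$ and drops $\Gamma(1-s)$, so that
\[
S_1=\pi^{s(n-2)/n}\,\Gamma(1-s)\,S^*.
\]
The missing step is this comparison. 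Since $n\ge2$ and $\Gamma(1-s)>\Gamma(1)=1$ for $0<s<1$, we get $S_1>S^*$, so the lemma holds as stated, though not sharply. Your closing remark that only local boundedness of $S_1$ in $s$ matters is accurate for the paper's use in Lemma~\ref{harnack}, but it does not prove the statement as written.
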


\bigskip

Finally, we derive a weighted trace type interpolation inequality.
\begin{lemma}\label{trace embedding}
Let $f(x,y)\in C_c^1(Q_R\cup\partial'Q_R)$. Then for  $0<s_1\leq s<1$ and $0<R<1$, we have
\[
\int_{B_R}|f(x,0)|^2\,dx
\leq (1+s_1^{-2})\varepsilon\int_{Q_R}|\nabla f(x,y)|^2y^{1-2s}\,dX
+\varepsilon^{-\frac{2-3s_1}{s_1}\cdot2-1}\int_{Q_R}| f(x,y)|^2y^{1-2s}\,dX,
\]
for any $\varepsilon\in(0,1)$.
\end{lemma}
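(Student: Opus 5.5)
The plan is a one-dimensional argument in the $y$-variable for each fixed $x\in B_R$, followed by a weighted average in $y$ and an optimization in the averaging height. Since $f\in C_c^1(Q_R\cup\partial'Q_R)$, for every $x\in B_R$ the function $y\mapsto f(x,y)$ vanishes for $y$ close to $R$. Extending $f$ by zero for $y\geq R$, we may write, for every $t>0$,
\[
f(x,0)=f(x,t)-\int_0^t\partial_yf(x,y)\,dy .
\]
By the elementary inequality $(a+b)^2\leq (1+\eta^{-1})a^2+(1+\eta)b^2$, with $\eta>0$ to be fixed in terms of $s_1$, and by Cauchy--Schwarz against the weight $y^{1-2s}$, we get
\[
\Big(\int_0^t|\partial_yf|\,dy\Big)^2\leq \int_0^t|\partial_y f|^2y^{1-2s}\,dy\int_0^t y^{2s-1}\,dy=\frac{t^{2s}}{2s}\int_0^t|\partial_y f|^2y^{1-2s}\,dy .
\]

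Next I multiply the pointwise inequality by $t^{1-2s}$ and integrate over $t\in(0,\delta)$. The left side becomes $|f(x,0)|^2\delta^{2-2s}/(2-2s)$. The gradient term contributes at most $(1+\eta)\frac{\delta^{2}}{4s}\int_0^\delta|\partial_yf|^2y^{1-2s}dy$. After dividing by $\delta^{2-2s}/(2-2s)$, this yields
\[
|f(x,0)|^2\leq (1+\eta^{-1})(2-2s)\,\delta^{2s-2}\int_0^\delta f^2y^{1-2s}\,dy+(1+\eta)\frac{(1-s)}{2s}\,\delta^{2s}\int_0^\delta|\nabla f|^2y^{1-2s}\,dy .
\]
Integrating in $x$ over $B_R$ and using that $f=0$ outside $Q_R$ gives the trace inequality with coefficients $c_1(s,\delta)\delta^{2s}$ in front of the Dirichlet term and $c_2(s,\delta)\delta^{2s-2}$ in front of the $L^2$ term. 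Note that $\delta$ need not be smaller than $R$.

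The last step is to choose $\delta=\delta(\varepsilon,s)$ so that both coefficients are bounded uniformly for $s\in[s_1,1)$. I would take $\delta^{2s}$ proportional to $s\varepsilon$, so that the gradient coefficient is at most $(1+\eta)\varepsilon/s_1$. Choosing $\eta$ suitably (for instance $\eta\leq 1$, and using $2/s_1\leq 1+s_1^{-2}$) then gives the claimed factor $(1+s_1^{-2})\varepsilon$. The $L^2$ coefficient then behaves like $C(s_1)\varepsilon^{-(1-s)/s}\leq C(s_1)\varepsilon^{-(1/s_1-1)}$ for $\varepsilon<1$.

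The main obstacle is purely bookkeeping: the constant $C(s_1)$ (which includes the factor coming from $1+\eta^{-1}$ and from $s^{-(1-s)/s}$) must be absorbed into the larger negative power $\varepsilon^{-\frac{2(2-3s_1)}{s_1}-1}$ stated in the lemma. My first attempt is to exploit the gap between the exponents: the stated exponent $\frac{4}{s_1}-5$ exceeds $\frac1{s_1}-1$ by $\frac{3}{s_1}-4$. I would try to tune the dependence of $\delta$ on $\varepsilon$ (for example, a slightly smaller power of $\varepsilon$) and $\eta$ so that this gap absorbs the numerical constant. This may require tracking when that difference is positive, which holds for $s_1<3/4$. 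If the gap is insufficient, I would instead shrink the choice of $\delta$ by an $s_1$-dependent factor and redistribute the constants between the two terms. Nothing beyond the one-dimensional fundamental theorem of calculus, Cauchy--Schwarz and Fubini is needed, and all constants depend only on $s_1$, so they are uniform in $s\in[s_1,1)$.
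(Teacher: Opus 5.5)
Your one-dimensional estimate is correct, and the route differs from the paper's. The paper bounds $\int_{B_R}|f(x,0)|^2$ by $\varepsilon\int|\nabla f|^2y^{1-2s}+\varepsilon^{-1}\int|f|^2y^{-(1-2s)}$. It then handles $s\le\frac12$ by Young's inequality in powers of $y$ plus a Hardy-type bound, and $s>\frac12$ by $y^{2(2s-1)}\le 1$ for $y<R<1$. Your averaging against $t^{1-2s}\,dt$ is scale-invariant and gives the sharp exponent $\frac{1-s}{s}$. The gap is the last step, which you leave open, and the mechanism you propose for it cannot work. A constant $C(s_1)>1$ can never be absorbed into the surplus factor $\varepsilon^{-(\frac{3}{s_1}-4)}$ uniformly in $\varepsilon\in(0,1)$, because that factor tends to $1$ as $\varepsilon\uparrow 1$. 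Changing the power of $\varepsilon$ in $\delta$ does not help near $\varepsilon=1$, and shrinking $\delta$ goes the wrong way, since it enlarges $\delta^{2s-2}$. What you need is that the $L^2$ constant is at most $1$, and this holds if you spend the full gradient budget. With $\eta=1$ and $\delta^{2s}=\frac{2s\varepsilon}{(1-s)s_1}$, the gradient coefficient is $\frac{2}{s_1}\varepsilon\le(1+s_1^{-2})\varepsilon$, and the $L^2$ coefficient satisfies
\[
4(1-s)^{1/s}\Big(\frac{s_1}{2s}\Big)^{\frac{1-s}{s}}\varepsilon^{-\frac{1-s}{s}}\le\big(2^{3s-1}(1-s)\big)^{1/s}\varepsilon^{-\frac{1-s}{s}}\le\varepsilon^{-\frac{1-s_1}{s_1}} .
\]
This uses $2^{3s-1}(1-s)<0.71$ on $(0,1)$ and $s\ge s_1$. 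The factor $s^{-(1-s)/s}$ you feared appears only if $\delta^{2s}$ is proportional to $s\varepsilon$ with an $s$-independent constant.

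The remaining comparison $\varepsilon^{-\frac{1-s_1}{s_1}}\le\varepsilon^{-\frac{2(2-3s_1)}{s_1}-1}$ holds if and only if $s_1\le\frac34$. You flagged this threshold but drew no conclusion. For $s_1>\frac34$ no bookkeeping can succeed, because the statement is false there. Take $s=s_1$ and $f_\lambda(x,y)=\phi(x)g(\lambda y)$, with $\phi\in C^1_c(B_R)$, $g\in C^1_c([0,R))$, $g(0)=1$, and set $\varepsilon=\lambda^{-2s_1-\tau}$ with $\lambda\to\infty$. The left side stays $\|\phi\|_2^2$. The right side is $O(\lambda^{-\tau})+O\big(\lambda^{(\frac{4}{s_1}-5)(2s_1+\tau)-(2-2s_1)}\big)$, which tends to $0$ for small $\tau>0$, since $(\frac{4}{s_1}-5)2s_1<2-2s_1$ exactly when $s_1>\frac34$. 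So your claim that the constants work for every $s_1$ must be replaced by the restriction $s_1\le\frac34$. For general $s_1$, the valid statement has $\varepsilon^{-\frac{1-s_1}{s_1}}$ in the $L^2$ term, and your argument proves exactly that. With this restriction and the choice of $\delta,\eta$ above, your proof is complete and covers more than the paper's. In the case $s>\frac12$ the paper ends with $\varepsilon^{-1}$ in front of the $L^2$ term, which is dominated by $\varepsilon^{-\frac{2(2-3s_1)}{s_1}-1}$ only when $s_1\le\frac23$.
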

\begin{proof}
For any $f(x,y)\in C_c^1(Q_R\cup\partial'Q_R)$, we have
\begin{equation}\label{trace embedding-1}
\begin{split}
\int_{B_R}|f(x,0)|^2\,dx
&=-\int_0^R\int_{B_R}\partial_y(|f(x,y)|^2)\,dX\\
&\leq\varepsilon\int_{Q_R}|\nabla f(x,y)|^2y^{1-2s}\,dX
+\varepsilon^{-1}\int_{Q_R}|f(x,y)|^2y^{-(1-2s)}\,dX\\
&:=\varepsilon\int_{Q_R}|\nabla f(x,y)|^2y^{1-2s}\,dX+II.
\end{split}
\end{equation}

 If  $0<s\leq \frac12$ and $0<s_1\leq s\leq s_2<1$, then using the fact that

\begin{equation*}
|y|^{-2(1-2s)}\leq \varepsilon^{\frac{2-3s}{2-4s}\cdot2}|y|^{-(2-3s)}
+\varepsilon^{-\frac{2-3s}{s}\cdot2}\\
\leq \varepsilon^2|y|^{-(2-3s)}+\varepsilon^{-\frac{2-3s_1}{s_1}\cdot2},
\end{equation*}
we deduce
\begin{equation*}
\begin{split}
II\leq \varepsilon\int_{Q_R}|f(x,y)|^2y^{-(1-s)}\,dX
+\varepsilon^{-\frac{2-3s_1}{s_1}\cdot2-1}\int_{Q_R}|f(x,y)|^2y^{1-2s}\,dX.
\end{split}
\end{equation*}

 If  $s>\frac12$ and $0<s_1\leq s\leq s_2<1$, since $0<R<1$ we have
\begin{equation*}
\begin{split}
II=\varepsilon^{-1}\int_{Q_R}|f(x,y)|^2y^{1-2s}y^{-2(1-2s)}\,dX
\leq \varepsilon^{-1}\int_{Q_R}|f(x,y)|^2y^{1-2s}\,dX.
\end{split}
\end{equation*}

Hence, the inequality
\begin{equation*}
\begin{split}
|f(x,y)|^2&=\Big(\int_y^R\partial_yf(x,y)\,dy\Big)^2\leq \frac1{2s}R^{2s}\int_0^R|\nabla f(x,y)|^2y^{1-2s}\,dy
\end{split}
\end{equation*}
and $0<R<1$ yield
\begin{equation}\label{trace embedding-3}
\begin{split}
\int_{Q_R}|f(x,y)|^2y^{-(1-s)}\,dX\leq \frac1{2s}\int_0^Ry^{-1+s}dy
\int_{Q_R}|\nabla f(x,y)|^2y^{1-2s}\,dX
\leq s_1^{-2}\int_{Q_R}|\nabla f(x,y)|^2y^{1-2s}\,dX.
\end{split}
\end{equation}
The conclusion follows from \eqref{trace embedding-1}--\eqref{trace embedding-3}.
\end{proof}

\bigskip

Recall that $S_V(n,s)$ is defined in \eqref{eq:1.4}. It is easy to show the following lemma holds.
\begin{lemma} \label{groundminimizer} Let $0<s\leq 1$. The following statements are equivalent:

(i) $u_s$ is a ground state solution of \eqref{eq:1.1};

(ii)  $u_s$ is a minimizer of $S_V(n,s)$ and solves \eqref{eq:1.1};

(iii) $u_s$ is a minimizer of $S_V(n,s)$ with  $\|u_s\|_p^p=S_V(n,s)^{\frac{p}{p-2}}$.
\end{lemma}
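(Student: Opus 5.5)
The plan is to organize everything around the Nehari-type identity and a scaling argument for the quotient $E_{V,s}$. Write
\[
I(u)=\frac12\int_{\mathbb{R}^n}\big(|(-\Delta)^{s/2}u|^2+Vu^2\big)\,dx-\frac1p\int_{\mathbb{R}^n}|u|^p\,dx
\]
for the energy. Ground states are understood as solutions of \eqref{eq:1.1} with least energy among nontrivial solutions. Set $\|u\|^2:=\int|(-\Delta)^{s/2}u|^2+\int Vu^2$. The basic observation is that for a solution $u\neq0$, testing the equation with $u$ gives $\|u\|^2=\|u\|_p^p$. It follows that
\[
I(u)=\Big(\tfrac12-\tfrac1p\Big)\|u\|_p^p, \qquad E_{V,s}(u)=\|u\|_p^{p-2},
\]
so $I(u)=(\frac12-\frac1p)E_{V,s}(u)^{\frac p{p-2}}$. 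Thus, on the solution set, least energy is the same as least value of $E_{V,s}$.

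Next I would show that the least energy among solutions equals $(\frac12-\frac1p)S_V(n,s)^{\frac p{p-2}}$. Since $E_{V,s}(u)\ge S_V(n,s)$ for every $u\ne0$, every nontrivial solution has $I(u)\ge(\frac12-\frac1p)S_V^{p/(p-2)}$. Conversely, as recalled in the introduction, a minimizer $w$ (which exists, by \cite{CW} under $(V1)$) satisfies the Euler--Lagrange equation with some multiplier $\lambda>0$. The rescaled function $\lambda^{1/(p-2)}w$ is then a solution that attains this bound. Replacing $w$ by $|w|$, which does not increase the quotient, handles any sign issues. For $s=1$ the same argument works in $H^1$.

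With this in hand, the three statements are linked as follows.

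(i)$\Rightarrow$(ii): a ground state has energy exactly $(\frac12-\frac1p)S_V^{p/(p-2)}$. By the identity above this forces $E_{V,s}(u_s)=S_V$, so $u_s$ is a minimizer, and it solves \eqref{eq:1.1} by definition.

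(ii)$\Rightarrow$(iii): since $u_s$ solves the equation, $\|u_s\|^2=\|u_s\|_p^p$. Combining this with $\|u_s\|^2=S_V\|u_s\|_p^2$ gives $\|u_s\|_p^{p-2}=S_V$.

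(iii)$\Rightarrow$(i): a minimizer satisfies $(-\Delta)^su_s+Vu_s=\lambda|u_s|^{p-2}u_s$ in the weak sense. Differentiating $E_{V,s}(u_s+t\varphi)$ at $t=0$ gives the multiplier explicitly as $\lambda=\|u_s\|^2/\|u_s\|_p^p$. From $\|u_s\|^2=S_V\|u_s\|_p^2$ and $\|u_s\|_p^{p-2}=S_V$ we get $\lambda=1$, so $u_s$ solves \eqref{eq:1.1}. Its energy is $(\frac12-\frac1p)S_V^{p/(p-2)}$, which is the least energy, so $u_s$ is a ground state.

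The only step needing care is the Lagrange multiplier computation, i.e.\ the differentiability of $E_{V,s}$ on $H^s\setminus\{0\}$ and the formula for $\lambda$. This is routine because $p<2^*_s$ gives $H^s\hookrightarrow L^p$. Everything else is algebra with the Nehari identity.
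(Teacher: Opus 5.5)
Your argument is correct and is essentially what the paper has in mind. The paper gives no separate proof (it calls the lemma easy), but its introduction sketches exactly your route: take a minimizer, compute the Lagrange multiplier, rescale by $\lambda^{1/(p-2)}$, and use $E_{V,s}(u)=\|u\|_p^{p-2}$ on solutions to identify the least energy level with $(\frac12-\frac1p)S_V(n,s)^{\frac{p}{p-2}}$.

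The only substantive input is the existence of a minimizer in (i)$\Rightarrow$(ii). That is available only for $s_0<s\le 1$, which is the range actually intended despite the stated $0<s\le1$. Your remark about replacing $w$ by $|w|$ is unnecessary, since $\lambda=\|w\|^2/\|w\|_p^p>0$ automatically.
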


\bigskip
\section{Uniform Harnack inequalities and Schauder estimates}

\bigskip

In this section, we will establish a  Harnack inequality with  uniform constants in $s$ for the fractional Laplacian elliptic problem
\begin{equation}\label{eq:0.2}
(-\Delta)^su(x)=  a(x)u+b(x),
\end{equation}
and uniform Schauder estimates for
\begin{equation}\label{eq:0.3}
 (-\Delta)^su(x)=f.
\end{equation}

For a fixed $s$, Harnack inequalities of  \eqref{eq:0.2} and  Schauder estimates of  \eqref{eq:0.3} are known in \cite{CS, JL, S, TX}. However, in order to get uniform bounds in $s$ for $u_s$, we need uniform  estimates with respect to $s$, which seems  not known.

Now we give a Harnack inequality with a  constant $C>0$ independent of  $s$ for \eqref{eq:0.2}.

\begin{lemma}\label{lem:2.4}
Let $0<r<1$.  Let $u\in H^s(\mathbb{R}^n)$ be nonnegative and satisfy that
\begin{equation}\label{lem A.2-0}
(-\Delta)^su(x)= a(x)u+b(x) \quad{\rm  in}\quad B_r.
\end{equation}
Assume $0<s_1\leq s\leq s_2\leq 1$. If $a, b\in L^q(B_{r})$  with $q>\frac{n}{2s_1}\geq\frac{n}{2s}$ and there exists $C_a>0$ independent of $s$ such that
\[
\|a\|_{L^q(B_{r})}\leq C_a,
\]
then
\begin{equation*}
\sup_{B_{{r}/2}}u\leq C\Big(\inf_{B_{r/2}}u+\|b\|_{L^q(B_{r})}\Big),
\end{equation*}
where $C>0$ is independent of $s$.
\end{lemma}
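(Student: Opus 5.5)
The plan is to go through the Caffarelli--Silvestre extension and run Moser iteration for the degenerate elliptic equation $\mathrm{div}(y^{1-2s}\nabla U)=0$ in $Q_r$, with the nonlinear Neumann condition $-\kappa_s\lim_{y\to0^+}y^{1-2s}\partial_yU=a(x)U+b(x)$ on $\partial'Q_r$. Throughout I will track every constant to check that it depends only on $n,s_1,s_2,q,C_a$. The template is the fixed-$s$ argument of Tan--Xiong / Jin--Li--Xiong. The uniformity comes from three sources: the uniform $A_2$ bound (Lemma \ref{wsi1}), the uniform doubling property (Lemma \ref{doubling}), and the uniform weighted Sobolev inequality (Lemma \ref{weighted Sobolev inequality}). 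The boundary term is absorbed through the trace interpolation of Lemma \ref{trace embedding}. One caveat: $\kappa_s$ and $1/\kappa_s$ stay bounded only if $s_2<1$. If $s=1$ is really allowed, I would treat that endpoint separately by the classical Harnack inequality. I would also rescale so that $r=1$; the scaling gives $\|a\|_{L^q}$ a factor $r^{2s-n/q}\le1$, so the assumptions are preserved.

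\textbf{Step 1 (setup).} Set $\bar u=U+\|b\|_{L^q(B_r)}$ and $\tilde a=a+b/\|b\|_{L^q}$. Then $\bar u$ satisfies the homogeneous Neumann problem with coefficient $\tilde a$, and $\|\tilde a\|_{L^q}\le C_a+1$.

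\textbf{Step 2 (energy inequality).} Test with $\eta^2\bar u^{\beta}$, using cutoffs $\eta$ on $Q_\rho$. This gives the Caccioppoli-type inequality
\[
\int_{Q}y^{1-2s}|\nabla(\eta\bar u^{(\beta+1)/2})|^2\le C(1+|\beta|)^{m}\Big(\int_Q y^{1-2s}|\nabla\eta|^2\bar u^{\beta+1}+\kappa_s^{-1}\int_{B}|\tilde a|\,\eta^2\bar u^{\beta+1}\,dx\Big),
\]
valid for $\beta\neq-1$. For the boundary integral, apply H\"older with exponents $q$ and $q'$. Since $q>n/(2s_1)$, the fractional trace exponent satisfies $2q'<2^*_{s}$ uniformly in $s\ge s_1$. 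Interpolating between $L^2$ and $L^{2^*_s}$ traces, and using Lemma \ref{trace embedding} for the $L^2$ part and Lemma \ref{weighted Sobolev inequality2} for the $L^{2^*_s}$ part, the boundary term is bounded by $\varepsilon\|\nabla(\eta w)\|^2_{L^2(y^{1-2s})}+C\varepsilon^{-N}\|\eta w\|^2_{L^2(y^{1-2s})}$. The exponent $N$ here depends only on $s_1$ and $q$.

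\textbf{Step 3 (iteration).} Combine Step 2 with Lemma \ref{weighted Sobolev inequality}, whose gain $\gamma=1+2/(n-2s)\ge1+2/n$ is uniform. Iterating with $\beta>0$ gives $\sup_{Q_{1/2}}\bar u\le C\|\bar u\|_{L^{p_0}(Q_{3/4},\mu)}$. Iterating with $\beta<-1$ gives $\|\bar u\|_{L^{-p_0}}\le C\inf_{Q_{1/2}}\bar u$.

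\textbf{Step 4 (crossover, main obstacle).} To link the two estimates, test with $\beta=-1$: $\log\bar u$ lies in weighted BMO with a uniform norm, using the weighted Poincar\'e inequality, which holds uniformly for $A_2$ weights by Fabes--Kenig--Serapioni with constants depending only on $C_0$. The John--Nirenberg inequality on the doubling space $(\mathbb{R}^{n+1},\mu)$ has constants depending only on the doubling constant $2^{2n+2}C_0$. It yields $\int e^{p_0|\log\bar u-c|}d\mu\le C$, hence $\|\bar u\|_{L^{p_0}}\|\bar u^{-1}\|_{L^{p_0}}\le C$. Chaining Steps 3 and 4 gives $\sup_{B_{r/2}}u\le\sup_{Q_{1/2}}\bar u\le C\inf_{Q_{1/2}}\bar u\le C(\inf_{B_{r/2}}u+\|b\|_{L^q})$. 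The last inequality uses that the boundary infimum controls the interior infimum, which holds since $\inf$ over $Q_{1/2}$ is at most the infimum over its bottom face.

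I expect Step 4 to be the main obstacle: verifying that the Poincar\'e and John--Nirenberg constants depend only on the $A_2$ and doubling constants, so that they stay uniform in $s$. The other steps are mostly careful bookkeeping of how $\kappa_s$, $S_1(s,n)$ and $\gamma$ vary on $[s_1,s_2]$.
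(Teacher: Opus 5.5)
Your proposal follows essentially the same route as the paper: the Caffarelli--Silvestre extension, a Moser iteration for the sup bound in which the boundary term is absorbed via the trace inequality of Lemma \ref{weighted Sobolev inequality2} and Lemma \ref{trace embedding} (this is Lemma \ref{harnack}), and a weak Harnack inequality obtained from the uniform weighted Poincar\'e and John--Nirenberg inequalities applied to $\log\bar U$ (this is Lemma \ref{harnack2}; there the paper reflects evenly across $y=0$, and obtains the $\inf$ bound by applying Lemma \ref{harnack} to the subsolution $\bar U^{-1}$ instead of iterating with $\beta<-1$). Two small corrections: in Step 1 the identity $aU+b=\tilde a\,\bar u$ is false for $\tilde a=a+b/k$, but $aU+b=\big(a+(b-ak)/\bar u\big)\bar u$ with $|a+(b-ak)/\bar u|\le 2|a|+|b|/k$ (because $\bar u\ge k$) works; and, like the paper's own auxiliary lemmas, the extension argument needs $s_2<1$, since both $\kappa_s$ and the $A_2$ constant of $|y|^{1-2s}$ blow up as $s\uparrow 1$, so treating the single endpoint $s=1$ classically would not restore uniformity near $1$.
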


\bigskip

The conclusion that $C>0$ independent of $s$ is new. It relies on meticulous estimates.

To prove Lemma \ref{lem:2.4}, we start with the following two lemmas.

\begin{lemma}\label{harnack}
Let $r\in (0,1)$ and $0<s_1\leq s\leq s_2<1$. Suppose $U\in H(Q_r,|y|^{1-2s})$ is a  nonnegative function  satisfying
\begin{equation}\label{harnack-1}
\int_{Q_r}y^{1-2s}\nabla U\cdot\nabla \phi\,dX
\leq \kappa_s\int_{\partial'Q_r}\big[a(x)U(x,0)+b(x)\big]\phi(x,0)\,dx
\end{equation}
for any nonnegative $\phi\in H(Q_r,|y|^{1-2s})$,  where $\kappa_s$ is defined in \eqref{kappa}.
If $a\in L^q(B_r)$  and  $b\in L^q(B_r)$, $q>\frac{n}{2s_1}\geq\frac{n}{2s}$, and
\begin{equation}\label{harnack-1aaaa}
\|a\|_q\leq C_{a}
\end{equation}
for some $C_{a}>0$ independent of $s$, then for any $p>0$   and  $\theta, \theta'\in (0,1)$ with $\theta<\theta'$,
there exists $C>0$  independent of $s$ such that
\begin{equation*}
\sup_{Q_{\theta r}}U\leq C\Big[(\theta'-\theta)^{-\gamma/p}\|U\|_{L^{p}(Q_{\theta'r},|y|^{1-2s})}+\|b\|_{L^q(B_{r})}\Big],
\end{equation*}
where $\gamma>0$ depends only on $n$, $s_1$ and $s_2$.
\end{lemma}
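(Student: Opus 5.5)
I will prove this local boundedness estimate by De Giorgi--Nash--Moser iteration for the degenerate operator $\mathrm{div}(y^{1-2s}\nabla\cdot)$ with the weighted Neumann-type boundary term, keeping track of every constant's dependence on $s$. The point is that each ingredient enters only through quantities bounded uniformly for $s\in[s_1,s_2]$: the $A_2$ constant $C_0$ (Lemma \ref{wsi1}), the doubling constant (Lemma \ref{doubling}), the weighted Sobolev constant $C_m$ (Lemma \ref{weighted Sobolev inequality}), and the trace interpolation (Lemma \ref{trace embedding}). I also need $\kappa_s=2^{1-2s}\Gamma(1-s)/\Gamma(s)$ to be bounded above and below on $[s_1,s_2]$, which holds since $s_2<1$ and $s_1>0$.

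First I reduce to $b$ small. Set $k=\|b\|_{L^q(B_r)}$ (or $k=\delta>0$, then let $\delta\to0$) and $\bar U=U+k$. Since $b\le (|b|/k)\bar U$ on $\partial'Q_r$, inequality \eqref{harnack-1} becomes
\[
\int_{Q_r}y^{1-2s}\nabla\bar U\cdot\nabla\phi\,dX\le\kappa_s\int_{\partial'Q_r}\tilde a\,\bar U\phi\,dx,
\qquad \tilde a=|a|+|b|/k .
\]
Here $\|\tilde a\|_q\le C_a+1$, so it suffices to treat the homogeneous case with a uniformly controlled potential.

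Next comes the Caccioppoli-type step. For $\beta\ge1$, a cutoff $\eta$ supported in $Q_{\theta' r}\cup\partial'Q_{\theta' r}$, and a truncation of $\bar U$ (to keep the test function admissible), I test with $\phi=\eta^2\bar U^{\beta}$. Writing $w=\bar U^{(\beta+1)/2}$, this gives
\[
\int y^{1-2s}|\nabla(\eta w)|^2\le C\beta\Big(\int y^{1-2s}|\nabla\eta|^2w^2+\kappa_s\int_{B_r}\tilde a\,(\eta w)^2dx\Big).
\]
The boundary term is where uniformity is delicate. By Hölder,
\[
\int\tilde a(\eta w)^2\le\|\tilde a\|_q\,\|\eta w\|^{2}_{L^{2q'}(B_r)},
\]
and $2q'<2^*_{s}$ uniformly because $q>n/(2s_1)$. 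I interpolate $L^{2q'}$ between $L^2(B_r)$ and $L^{2^*_s}(B_r)$. The $L^{2^*_s}$ part is controlled by Lemma \ref{weighted Sobolev inequality2} (its constant $S_1$ is bounded on $[s_1,s_2]$), and the $L^2$ trace part by Lemma \ref{trace embedding}. Choosing $\varepsilon\sim\beta^{-1}$ absorbs the gradient term and leaves $C\beta^{\sigma}\int y^{1-2s}w^2(\eta^2+|\nabla\eta|^2)$, with $\sigma$ depending only on $n,s_1,s_2,q$. Combining with Lemma \ref{weighted Sobolev inequality} (constant $C_m$) yields the reverse Hölder step
\[
\|\bar U\|_{L^{\gamma(\beta+1)}(Q_{\rho_1},\mu)}\le\Big(\frac{C\beta^{\sigma}}{(\rho_2-\rho_1)^2}\Big)^{1/(\beta+1)}\|\bar U\|_{L^{\beta+1}(Q_{\rho_2},\mu)},
\]
where the weighted-measure normalisations are handled with Lemma \ref{doubling}.

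Iterating with $\beta_j+1=\gamma^j p_0$ over radii $\rho_j$ decreasing from $\theta' r$ to $\theta r$ gives the bound for $p\ge 2$ with factor $(\theta'-\theta)^{-\gamma'/p}$; the product converges because $\gamma=1+2/(n-2s)\ge 1+2/n$ is bounded away from $1$. For $0<p<2$ I use the standard Giaquinta-type interpolation: bound $\|U\|_{L^2}$ by $\sup^{1-p/2}\|U\|_p^{p/2}$ on intermediate cylinders, apply Young's inequality, and then the iteration lemma for $f(t)\le\frac12 f(t')+C(t'-t)^{-c}A$. Finally I return from $\bar U$ to $U$, which produces the $\|b\|_q$ term.

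I expect the main obstacle to be the absorption of the boundary potential term with constants uniform in $s$, especially near $s=1/2$. There the splitting in Lemma \ref{trace embedding} changes form, so the exponents in $\varepsilon$ must be checked to be bounded using only $s_1$. The power $\beta^{\sigma}$ must also stay polynomial so that the iteration product converges uniformly.
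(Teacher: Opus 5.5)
Your proposal is correct and follows essentially the same route as the paper. The paper also shifts by $k=\|b\|_{L^q(B_r)}$, uses truncated Moser test functions $\eta^2\bar U_m^\beta\bar U$, and bounds the boundary term by Hölder and interpolation between $L^2$ and $L^{2^*_s}$, controlled through Lemma \ref{weighted Sobolev inequality2} and Lemma \ref{trace embedding}, before closing with the weighted Sobolev inequality and iterating with $s$-uniform constants. Your explicit Giaquinta-type absorption argument for $0<p<2$ spells out what the paper leaves as ``standard by iteration.''
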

\begin{proof} The proof relies on the Moser-Nash iteration, we sketch the proof.
For $k,m>0$,  let $\overline{U}=U+k$ and
\begin{equation*}
\overline{U}_m=
\begin{cases}
\overline{U}&\text{   if }U<m;\\
k+m& \text{   if }U\geq m.
\end{cases}
\end{equation*}
If $b\neq0$,  fix $k$ as
\[
k=\|b\|_q.
\]
Otherwise, choose arbitrary $k>0$ and finally let $k\to0$.

Choose the test function $\phi$ in \eqref{harnack-1} with the form
\[
\phi=\eta^2\overline{U}_m^\beta\overline{U}\in H(y^{1-2s}, Q_r)
\]
for some $\beta\geq 0$ and some nonnegative function
$\eta\in C_c^1(Q_r\cap \partial'Q_r)$.

Let $W=\overline{U}_m^{\frac{\beta}2}\overline{U}$.
 We have that
\begin{equation}\label{083001}
\begin{split}
&\int_{Q_r}y^{1-2s}\nabla U\cdot\nabla \phi\,dX\geq \frac{1}{8(1+\beta)}\int_{Q_r}y^{1-2s}|\nabla(\eta W)|^2\,dX
-3\int_{Q_r}y^{1-2s}W^2|\nabla \eta|^2\,dX.
\end{split}
\end{equation}

Since $0<s_1\leq s\leq s_2<1$ and $\overline{U}>U$, we have for $q'=\frac{q}{q-1}$ that,
\begin{equation*}
\begin{split}
&\kappa_s\int_{\partial'Q_r}\big[a(x)U(x,0)+b(x)\big]\phi(x,0)\,dx
\leq C\big[\|a\|_{L^q(\partial'Q_r)}+1\big]\|\eta W\|_{L^{2q'}(\partial'Q_r)}^2
:=I_1.
\end{split}
\end{equation*}
We may choose  $C>1$ independent of $s$ here and the rest of the proof.

Noting $2<2q'=\frac{2q}{q-1}<2^*_s$, by the Young inequality and Lemma \ref{weighted Sobolev inequality2}, we deduce that
\begin{equation*}
\begin{split}
I_1&\leq C\big[\|a\|_{L^q(\partial'Q_r)}+1\big]\|\eta W\|_{L^{2}(\partial'Q_r)}^{2\theta}\|\eta W\|_{L^{2_s^*}(\partial'Q_r)}^{2(1-\theta)}\\
&\leq \Big[\frac1{16(1+\beta)}\Big]^{\frac1{1-\theta}}\int_{Q_r}y^{1-2s}|\nabla(\eta W)|^2\,dX
+ \Big[\frac1{16(1+\beta)}\Big]^{-\frac1{\theta}}C^{\frac1{\theta}}\big[\|a\|_{L^q(\partial'Q_r)}+1\big]^{\frac1{\theta}}\|\eta W\|_{L^{2}(\partial'Q_r)}^{2}.\\
\end{split}
\end{equation*}
where
\[
1<\frac1\theta=\frac{2sq}{2sq-n}\leq \frac{2s_2q}{2s_1q-n}.
\]
Hence,
\begin{equation}\label{083003a}
I_1\leq\frac1{16(1+\beta)}\int_{Q_r}y^{1-2s}|\nabla(\eta W)|^2\,dX
+\Big[C(1+\beta)\big(\|a\|_{L^q(\partial'Q_r)}+1\big)\Big]^{\frac{2s_2q}{2s_1q-n}}\|\eta W\|_{L^{2}(\partial'Q_r)}^{2}.
\end{equation}
It follows from \eqref{harnack-1}, \eqref{harnack-1aaaa}, \eqref{083001}--\eqref{083003a}, Lemma \ref{weighted Sobolev inequality} and Lemma \ref{trace embedding} that
\begin{equation*}
\begin{split}
\Big(\int_{Q_r}y^{1-2s}|\eta W|^{\nu}\,dX\Big)^{\frac2{\nu}}\leq\int_{Q_r}y^{1-2s}|\nabla(\eta W)|^2\,dX\leq C(1+\beta)^{\alpha}
\int_{Q_r}y^{1-2s}(|\eta|^2+|\nabla \eta|^2 )|W|^2\,dX,
\end{split}
\end{equation*}
where $\nu=2+\frac{4}{n-2s}$. For any $0<r'<r\leq 1$, choose $\eta\in C_c^1(Q_r\cap \partial'Q_r)$ such that
\[
\eta\equiv1\text{ in }Q_{r'}, \text{     } |\nabla\eta|\leq \frac2{r-r'}.
\]
Then
\begin{equation*}
\Big(\int_{Q_{r'}}y^{1-2s}|\overline{U}_m|^{\nu(\beta+2)/2}\,dX\Big)^{\frac2{\nu}}
\leq C\frac{(1+\beta)^{\alpha}}{(r-r')^2}
\int_{Q_r}y^{1-2s}|\overline{U}|^{\beta+2}\,dX
\end{equation*}
provided that $\overline{U}\in L^{\beta+2}(Q_r, y^{1-2s})$. Letting $m\to +\infty$, then we obtain

\begin{equation*}
\|\overline{U}\|_{L^{\nu(\beta+2)/2}(Q_{r'}, y^{1-2s})}\leq \Big(C\frac{(1+\beta)^{\alpha}}{(r-r')^2}\Big)^{1/(\beta+2)}
\|\overline{U}\|_{L^{\beta+2}(Q_{r}, y^{1-2s})}.
\end{equation*}
 The rest proof is standard by iteration.
\end{proof}

\begin{lemma}\label{harnack2}
Let $r\in (0,1)$, $0<s_1\leq s\leq s_2<1$. Suppose $U\in H(Q_r,|y|^{1-2s})$ is a nonnegative function satisfying
\begin{equation}\label{harnack2-1}
\int_{Q_r}y^{1-2s}\nabla U\cdot\nabla \phi\,dX
\geq \kappa_s\int_{\partial'Q_r}\big[a(x)U(x,0)+b(x)\big]\phi(x,0)\,dx
\end{equation}
for any nonnegative function $\phi\in H(Q_r,|y|^{1-2s})$.
If $a, b\in L^q(B_r)$,  $q>\frac{n}{2s_1}\geq\frac{n}{2s}$,  and
\[
\|a\|_q\leq C_{a}
\]
for some $C_{a}>0$ independent of $s$. Then for any $\theta, \theta'\in (0,\sqrt{2}/2)$,  $\theta<\theta'$,
there exist $C>0$  independent of $s$, and a positive constant $p>0$  such that
\begin{equation*}
\inf_{Q_{\theta r}}U+\|b\|_{L^q(B_{r})}\geq C(\theta'-\theta)^{\gamma/p}\|U\|_{L^{p}(Q_{\theta'r},|y|^{1-2s})},
\end{equation*}
where $\gamma>0$ depends only on $n$, $s_1$ and $s_2$.
\end{lemma}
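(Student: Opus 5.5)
We prove Lemma \ref{harnack2}, the weak Harnack inequality for supersolutions, by the Moser iteration with negative exponents combined with a John--Nirenberg/crossover step for $\log \overline{U}$. Throughout we track every constant so that it depends only on $n,s_1,s_2,q,C_a$. Set $\overline{U}=U+k$ with $k=\|b\|_{L^q(B_r)}$ (or $k>0$ arbitrary, letting $k\to0$ if $b\equiv0$). Then $|b|\le k^{-1}|b|\,\overline{U}$, so \eqref{harnack2-1} becomes a supersolution inequality with boundary potential $\tilde a=|a|+k^{-1}|b|$. This potential satisfies $\|\tilde a\|_{L^q(B_r)}\le C_a+1$ uniformly in $s$.

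\textbf{Negative powers.} For $\beta<0$, $\beta\neq-1$, test \eqref{harnack2-1} with $\phi=\eta^2\overline{U}^{\beta}$, where $\eta\in C_c^1(Q_r\cup\partial'Q_r)$ is a cutoff. Put $W=\overline{U}^{(\beta+1)/2}$. Exactly as in \eqref{083001}, this gives
\[
\frac{|\beta|}{C(1+|\beta|)^2}\int_{Q_r}y^{1-2s}|\nabla(\eta W)|^2\le C\int_{Q_r}y^{1-2s}W^2|\nabla\eta|^2+C\int_{\partial'Q_r}\tilde a\,\eta^2W^2\,dx .
\]
The boundary term is absorbed using the interpolation between $L^2$ and $L^{2^*_s}$ together with Lemma \ref{weighted Sobolev inequality2}, as in \eqref{083003a}. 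Lemma \ref{trace embedding} then converts $\|\eta W\|_{L^2(\partial'Q_r)}$ back into interior weighted quantities. Finally the weighted Sobolev inequality, Lemma \ref{weighted Sobolev inequality}, gives a gain from exponent $2$ to $\nu=2+4/(n-2s)\ge 2+4/n$.

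Iterating over shrinking cylinders with $\beta+1=-p_0\nu^j$ yields
\[
\inf_{Q_{\theta r}}\overline{U}\ge C(\theta'-\theta)^{\gamma/p_0}\Big(\int_{Q_{\theta' r}}y^{1-2s}\overline{U}^{-p_0}\Big)^{-1/p_0}.
\]
The same scheme with $-1<\beta<0$, which only needs finitely many steps and so keeps the constants uniform, gives the reverse comparison
\[
\|\overline{U}\|_{L^{p}(Q_{\theta' r},\mu)}\le C\,\|\overline{U}\|_{L^{p_0}(Q_{\theta'' r},\mu)}
\]
for some fixed $p<p_0$. This uses the measure $\mu=y^{1-2s}dX$.

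\textbf{Crossover.} It remains to show that
\[
\int \overline{U}^{p_0}d\mu\cdot\int\overline{U}^{-p_0}d\mu\le C\,\mu(Q)^2 .
\]
Test with $\phi=\eta^2\overline{U}^{-1}$. This shows that $w=\log\overline{U}$ satisfies
\[
\int_{Q}y^{1-2s}|\nabla w|^2\eta^2\le C\int y^{1-2s}|\nabla\eta|^2+C\int_{\partial'}\tilde a\,\eta^2 ,
\]
where the last term is bounded via Hölder since $\tilde a\in L^q$. Applied on every sub-cylinder (and reflected evenly across $y=0$), this gives
\[
\int_{B_\rho}|\nabla w|^2d\mu\le C\rho^{-2}\mu(B_\rho).
\]
A weighted Poincaré inequality for the $A_2$ weight $|y|^{1-2s}$ (Fabes--Kenig--Serapioni, with constant depending only on $C_0$ from Lemma \ref{wsi1}) then shows $w\in BMO(d\mu)$ with norm bounded uniformly in $s$. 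Because $\mu$ is doubling with a uniform constant (Lemma \ref{doubling}), the John--Nirenberg inequality on the space of homogeneous type $(\mathbb{R}^{n+1},\mu)$ supplies $p_0>0$ and $C$ depending only on $n,s_1,s_2$ such that
\[
\int e^{p_0|w-w_Q|}\,d\mu\le C\mu(Q).
\]
This is exactly the crossover estimate. The restriction $\theta'<\sqrt2/2$ guarantees that the balls used in this covering argument around $Q_{\theta' r}$ stay inside $Q_r$.

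\textbf{Conclusion and main obstacle.} Chaining the three estimates gives
\[
\inf_{Q_{\theta r}}U+k\ge C(\theta'-\theta)^{\gamma/p}\|U\|_{L^p(Q_{\theta'r},\mu)}.
\]
The main obstacle is the uniformity in $s$ of the crossover step, that is, the constants in the weighted Poincaré and John--Nirenberg inequalities. I would handle it by observing that these constants depend on the weight only through the $A_2$ constant $C_0$ and the doubling constant, both already uniform on $[s_1,s_2]$ by Lemmas \ref{wsi1} and \ref{doubling}. A second point to check is that the factor $\kappa_s$ and the exponent $1/\theta=2sq/(2sq-n)$ stay bounded, which holds because $q>n/(2s_1)$.
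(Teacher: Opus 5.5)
Your proposal is correct and follows essentially the paper's route: $\overline U=U+k$, even reflection across $y=0$, the bound $\int|\nabla\log\overline U|^2\,d\mu\le Ct^{-2}\mu(\mathbf{B}_t)$ turned into exponential integrability through the Fabes--Kenig--Serapioni Poincar\'e inequality and a weighted John--Nirenberg inequality with the uniform $A_2$ and doubling constants, and a negative-power Moser step. The paper gets that Moser step by noting that $\overline U^{-1}$ is a subsolution and applying Lemma \ref{harnack}, instead of iterating $\eta^2\overline U^{\beta}$, $\beta<-1$, by hand as you do.

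You are more careful than the paper in keeping the boundary term $\kappa_s\int\tilde a\,\eta^2\,dx$ in the logarithmic estimate; the paper drops it, and your H\"older bound with $q>n/(2s)$ is what controls it. Your extra $-1<\beta<0$ comparison is superfluous: the lemma only asks for some $p>0$, so you may take $p=p_0$, and for $p<p_0$ that comparison is just H\"older anyway.
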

\begin{proof}
Let $\overline{U}=U+k$ with $k>0$ and $V=\Big(\overline{U}\Big)^{-1}$.
If $b\neq0$,  fix $k$ as $k=\|b\|_q$.  If $b=0$, choose arbitrary $k>0$ and finally let $k\to0$.
Let  $\mathbf{B}_{R,0}=\mathbf{B}_{R,0}((y,0))$ be the ball in $\mathbb{R}^{n+1}$, and $\mathbf{B}_{R,0}^+$, $\mathbf{B}_{R,0}^-$ be the half balls in $\mathbb{R}_+^{n+1}$ and $\mathbb{R}_-^{n+1}$, respectively.

We first prove that there exists $p>0$ such that
\begin{equation}\label{harnack2-4}
\int_{Q_{\theta' r}}\overline{U}^{p}|y|^{1-2s}\,dX\int_{Q_{\theta' r}}\Big(\overline{U}\Big)^{-p}|y|^{1-2s}\,dX\leq C.
\end{equation}
Since $Q_{\theta' r}\subset \mathbf{B}_{\sqrt{2}\theta'r,0}^+\subset Q_r$ for $\theta' \in (0, \sqrt{2}/2)$, it suffices to prove that
\begin{equation}\label{harnack2-5}
\int_{\mathbf{B}_{\sqrt{2}\theta'r,0}^+}\overline{U}^{p}|y|^{1-2s}\,dX\int_{\mathbf{B}_{\sqrt{2}\theta'r,0}^+}\Big(\overline{U}\Big)^{-p}|y|^{1-2s}\,dX\leq C.
\end{equation}
Define $\widetilde{U}\in H(B_r\times (-r,r), |y|^{1-2s})$ by
\begin{equation*}
\widetilde{U}(x,y)=
\begin{cases}
 & \overline{U}(x,y)   \,\,\,\quad {\rm if} \quad y\geq0,\\
 & \overline{U}(x,-y)   \quad {\rm if} \quad y<0.
\end{cases}
\end{equation*}
The inequality \eqref{harnack2-5} is equivalent to
\begin{equation}\label{harnack2-5a}
\int_{\mathbf{B}_{\sqrt{2}\theta'r,0}}\widetilde{U}^{p}|y|^{1-2s}\,dX\int_{\mathbf{B}_{\sqrt{2}\theta'r,0}}\Big(\widetilde{U}\Big)^{-p}|y|^{1-2s}\,dX\leq C.
\end{equation}
If  there exists a $p$ such that
\begin{equation}\label{harnack2-6}
\int_{\mathbf{B}_{\sqrt{2}\theta'r,0}}e^{p|W|}|y|^{1-2s}\,dX\leq C,
\end{equation}
where
$$
W=\log\widetilde{U}-\beta \quad {\rm and}\quad \beta=\mu\big(\mathbf{B}_{\sqrt{2}\theta'r,0}\big)^{-1}\int_{\mathbf{B}_{\sqrt{2}\theta'r,0}}|y|^{2s}\log\widetilde{U}\,dX,
$$
then \eqref{harnack2-5a} is valid.

\bigskip

Now, we prove that there exists a $p$ such that \eqref{harnack2-6} holds.
Let $X_0\in \mathbf{B}_{\sqrt{2}\theta'r,0}$ and choose $t>0$ such that $ \mathbf{B}_{2t}(X_0)\subset\mathbf{B}_{\sqrt{2}\theta'r,0}$, and
let $\Psi\in C_c^1( \mathbf{B}_{2t}(X_0))$  be a function such that
\[
0\leq \Psi\leq 1, \quad \Psi=1\text{ in } \mathbf{B}_t(X_0), \quad |\nabla \Psi|\leq \frac{C}t,
\]
Take  the  test function $\phi$ in \eqref{harnack2-1} as
$$
\phi=\overline{U}^{-1}\Psi^2,
$$
we have
\begin{equation}\label{harnack2-22}
\int_{Q_r}|y|^{1-2s}{|\nabla W |^2}\Psi^2\,dX\leq C\int_{Q_r}|y|^{1-2s}{|\nabla \Psi |^2}\,dX
\end{equation}
and
\begin{equation*}
\begin{split}
\int_{\mathbf{B}_t^+(X_0)}|y|^{1-2s}{|\nabla W |^2}\,dX
\leq C\int_{\mathbf{B}_{2t}^+(X_0)}|y|^{1-2s}{|\nabla \Psi |^2}\,dX,
\end{split}
\end{equation*}

Replacing $\Psi$ in \eqref{harnack2-22} by
\[
\tilde{\Psi}(x,y)=\Psi(x,-y),
\]
we have
\begin{equation*}
\begin{split}
\int_{\mathbf{B}_t^-(X_0)}|y|^{1-2s}{|\nabla W |^2}\,dX
\leq C\int_{\mathbf{B}_{2t}^-(X_0)}|y|^{1-2s}{|\nabla \Psi |^2}\,dX,
\end{split}
\end{equation*}

Hence,
\begin{equation*}
\begin{split}
\int_{\mathbf{B}_t(X_0)}|y|^{1-2s}{|\nabla W |^2}\,dX
\leq C\int_{\mathbf{B}_{2t}(X_0)}|y|^{1-2s}{|\nabla \Psi |^2}\,dX\leq Ct^{-2}\int_{\mathbf{B}_{2t}(X_0)}|y|^{1-2s}\,dX \leq
 4Ct^{-2}\int_{\mathbf{B}_{t}(X_0)}|y|^{1-2s}\,dX.
\end{split}
\end{equation*}

Let $W_{X,t}=W_{\mathbf{B}_t(X_0)}$.
The Poincare inequality, see Theorem 1.5 in \cite{Fabes1982}, and Lemma \ref{wsi1} yield for any ball $\mathbf{B}_t(X_0)
\subset \mathbf{B}_{\sqrt{2}\theta'r,0}$ that,
\begin{equation*}
\begin{split}
&\frac{1}{\int_{\mathbf{B}_t(X_0)}|y|^{1-2s}\,dX}\int_{\mathbf{B}_t(X_0)}|y|^{1-2s}|W-W_{X,t}|\,dX\\
&\leq \frac{1}{\int_{\mathbf{B}_t(X_0)}|y|^{1-2s}\,dX}\Bigg(\int_{\mathbf{B}_t(X_0)}|y|^{1-2s}\,dX\int_{\mathbf{B}_t(X_0)}|y|^{1-2s}|W-W_{X,t}|^2\,dX\Bigg)^{1/2}\\
&\leq Ct\Bigg[\frac{1}{\int_{\mathbf{B}_t(X_0)}|y|^{1-2s}\,dX}\int_{\mathbf{B}_t(X_0)}|y|^{1-2s}|\nabla W|^2\,dX\Bigg]^{1/2}\leq C.
\end{split}
\end{equation*}
Thus, 	the weighted John-Nirenberg  inequality, see  Corollary 18.8 in \cite{HK}) and  Lemma \ref{doubling},  there exists  $p>0$ such that
\begin{equation*}
\begin{split}
\int_{\mathbf{B}_{\sqrt{2}\theta'r,0}}e^{p|W|}|y|^{1-2s}\,dX
\leq 3\int_{\mathbf{B}_{\sqrt{2}\theta'r,0}}|y|^{1-2s}\,dX\leq \frac1{2-2s}Cr^{n+2-2s}\leq \frac1{2-2s_2}C,
\end{split}
\end{equation*}
that is, \eqref{harnack2-6} holds.

 Let  $\tilde U=\Big(\overline{U}\Big)^{-1}$. Choosing  the  test function
$$
\phi=\overline{U}^{-2}\Phi\in H(Q_r,|y|^{1-2s})
$$
in \eqref{harnack2-1}, where  $\Phi\in H(Q_r,|y|^{1-2s})$ is nonnegative, we have
\begin{equation*}
\begin{split}
-\int_{Q_r}y^{1-2s}\nabla \tilde U\cdot\nabla \Phi\,dX
\geq \kappa_s\int_{\partial'Q_r}\frac{a(x)U(x,0)+b(x)}{\overline{U}}\tilde U\Phi\,dx.
\end{split}
\end{equation*}
By Lemma \ref{harnack}, for any $p>0$,
\begin{equation*}
\sup_{Q_{\theta r}}\tilde U\leq C(\theta'-\theta)^{-\gamma/p}\|\tilde U\|_{L^{p}(Q_{\theta'r},|y|^{1-2s})}.
\end{equation*}
This with  \eqref{harnack2-4} yields the conclusion.
\end{proof}

{\bf Proof of Lemma \ref{lem:2.4}.}
Let  $U$ be the $2s$-harmonic extension of $u$. Then by \eqref{lem A.2-0},
\begin{equation*}
	\int_{Q_r}y^{1-2s}\nabla U\cdot\nabla \phi\,dX
= \kappa_s\int_{\partial'Q_r}\big[a(x)U(x,0)+b(x)\big]\phi(x,0)\,dx.
\end{equation*}

Taking $\theta=\frac12$ and $\theta'=\frac23$ in Lemmas \ref{harnack} and \ref{harnack2},  we find
\[
\sup_{B_{r/2}}u\leq \sup_ {Q_{r/2}}U\leq C\big(\inf_{Q_{r/2}}U+\|b\|_{L^q(B_r)}\big)\leq C\big(\inf_{B_{r/2}}u+\|b\|_{L^q(B_r)}\big).
\]
The proof is completed.\quad
$\Box$

\bigskip

Next, we establish uniform Schauder estimates with respect to $s\in [s_1,1)\subset(0,1)$ for the fractional Laplacian equation
$$(-\Delta)^su=f.$$
The proof is inspired by  Proposition 2.8 in  \cite{S}, in which the Schauder estimates  obtained for fixed $s\in(0,1)$.

\begin{lemma}\label{schauder}
Assume $(-\Delta)^su=w$  and $u, w\in L^\infty(\mathbb{R}^n)$. Then

$(i)$ If $0<s_1<  s\leq \frac12$, then  $u\in C^{0,\, \beta s_1}(\mathbb{R}^n)$  for any $\beta\in(0,2)$, and
\[
\|u\|_{C^{0,\,\beta s_1}(\mathbb{R}^n)}\leq C(\|u\|_{L^\infty(\mathbb{R}^n)}+\|w\|_{L^\infty(\mathbb{R}^n)}),
\]
 where $C>0$ depends only $n$, $\beta$ and $s_1$.

$(ii)$ If  $\frac12\leq s_1< s< 1$, then $u\in C^{1,s_1-1/2}(\mathbb{R}^n)$   and
\[
\|u\|_{C^{1,s_1-1/2}(\mathbb{R}^n)}\leq C(\|u\|_{L^\infty(\mathbb{R}^n)}+\|w\|_{L^\infty(\mathbb{R}^n)}),
\]
where $C>0$ depends only $n$ and $s_1$.
\end{lemma}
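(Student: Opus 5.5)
We follow Silvestre's Proposition 2.8 in \cite{S}, but track every constant in $s$. The key point is that $u=(-\Delta)^{-s}w$ locally up to an $s$-harmonic correction. A cleaner route, which keeps constants uniform, is the Riesz-potential/heat-semigroup representation. For $\delta>0$ we write
\[
u=\big(\delta+(-\Delta)^s\big)^{-1}(\delta u+w)=\int_0^\infty e^{-\delta t}P^s_t(\delta u+w)\,dt ,
\]
where $P^s_t$ is the fractional heat semigroup. We take $\delta=1$, so that $g:=u+w\in L^\infty$ with $\|g\|_\infty\le\|u\|_\infty+\|w\|_\infty$. This avoids the non-integrability of the Riesz kernel at infinity. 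It is justified for bounded $u$ because $(1+(-\Delta)^s)$ is injective on bounded functions, by the Liouville property for bounded solutions of $(1+(-\Delta)^s)v=0$.

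Regularity then reduces to estimates on the Bessel-type kernel $G_s=\int_0^\infty e^{-t}p^s_t\,dt$, where $p^s_t(x)=t^{-n/(2s)}p^s_1(t^{-1/(2s)}x)$. The needed inputs are uniform bounds for $s\in[s_1,1)$ on $\|p^s_1\|_{L^1}=1$ and on $\|\nabla^k p^s_1\|_{L^1}$ for $k=1,2$. These follow from the subordination formula $p^s_1=\int_0^\infty \eta_s(\tau)\,\Phi_\tau\,d\tau$ with the Gaussian $\Phi_\tau$ and the $s$-stable subordinator density $\eta_s$: $\|\nabla^k\Phi_\tau\|_{L^1}\le C\tau^{-k/2}$, and the negative moments $\int\tau^{-k/2}\eta_s(\tau)\,d\tau=\Gamma(1+k/(2s))/\Gamma(1+k/2)$ are bounded for $s\ge s_1$. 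By scaling, $\|\nabla^k p^s_t\|_{L^1}\le Ct^{-k/(2s)}$ with $C=C(n,k,s_1)$.

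Now we estimate differences. For $|h|\le1$ we have $\|p^s_t(\cdot+h)-p^s_t\|_{L^1}\le \min(2,C|h|t^{-1/(2s)})$. Integrating against $e^{-t}$ and splitting at $t=|h|^{2s}$ gives
\[
|u(x+h)-u(x)|\le C\|g\|_\infty\big(|h|^{2s}+|h|\textstyle\int_{|h|^{2s}}^\infty e^{-t}t^{-1/(2s)}dt\big).
\]
In case (i), $s\le\frac12$, so $1/(2s)\ge1$ and the integral is $\le C|h|^{2s-1}$, with a logarithm when $s=\frac12$. This yields $|h|^{2s}\lesssim|h|^{\beta s_1}$ for $\beta<2$, where we use $2s>\beta s_1$ and $|h|\le1$. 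The slack $\beta<2$ absorbs the logarithm and the blow-up of the constant $(1/(2s)-1)^{-1}$ as $s\downarrow$ toward the relevant endpoint; equivalently, we may bound the integral directly by $C_\beta|h|^{\beta s_1-1}$. For $|h|>1$ the trivial bound $2\|u\|_\infty$ suffices.

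In case (ii), $s>\frac12$ and the gradient kernel $\int e^{-t}\|\nabla p^s_t\|_{L^1}dt\le C\int e^{-t}t^{-1/(2s)}dt\le C(s_1)$ is integrable uniformly, since $1/(2s)\le 1/(2s_1)\le1$. Care is needed at $s_1=\frac12$: there we use $s>s_1$ strictly and the target exponent $s_1-\frac12=0$. So $\nabla u=\int e^{-t}\nabla p^s_t*g\,dt$ is bounded. We then repeat the splitting for $\nabla u$ using the $k=1$ and $k=2$ bounds: $\|\nabla p^s_t(\cdot+h)-\nabla p^s_t\|_1\le\min(Ct^{-1/(2s)},C|h|t^{-1/s})$. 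Splitting at $t=|h|^{2s}$ gives a modulus $C|h|^{2s-1}\le C|h|^{2s_1-1}$. This is stronger than the required $|h|^{s_1-1/2}$, so the weaker exponent in the statement leaves room for uniformity as $s\to1$, where $\int_{|h|^{2s}}e^{-t}t^{-1/s}dt$ behaves like $|h|^{2s-2}/(1/s-1)$ and degenerates. We handle this by bounding $t^{-1/s}\le t^{-1/(2s)-\theta}$ with $\theta$ chosen so that the exponent $s_1-\frac12$ results with a constant depending only on $s_1$.

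The main obstacle is the uniform-in-$s$ kernel bounds, chiefly the subordination moment computation and checking that no constant degenerates at $s\to1$ or $s=\frac12$. The rest is bookkeeping of the splitting integrals.
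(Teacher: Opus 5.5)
Your argument takes a genuinely different route from the paper's. It is sound for case (i) and for case (ii) when $s_1>\frac12$.

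\textbf{How the routes differ.} The paper localizes. It cuts off the right-hand side and writes $\tilde v=(-\Delta)^{-s}(\eta f)$ as $(-\Delta)^{\frac12-s}(-\Delta)^{-\frac12}(\eta f)$, respectively $(-\Delta)^{1-s}(-\Delta)^{-1}(\eta f)$. It then quotes Silvestre's H\"older mapping properties of $(-\Delta)^{\sigma}$ and controls the $s$-harmonic remainder in $B_{1/2}$ through the Poisson kernel of the ball. You instead use the global identity $u=(1+(-\Delta)^s)^{-1}(u+w)$. This costs only the standard Liouville fact that a bounded solution of $(1+(-\Delta)^s)v=0$ vanishes: $\hat v$ is supported at the origin, so $v$ is a bounded polynomial, hence a constant, hence zero. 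Everything then reduces to $L^1$ bounds on derivatives of $p^s_t$ via subordination, with constants that are explicit Gamma functions. This makes uniformity in $s$ transparent. In the paper, uniformity rests on the $s$-dependence of quoted constants and on untracked cancellations. For instance, the factor $\sin\pi s$ in the Poisson kernel has to compensate the $(1-s)^{-1}$ blow-up of $\int_{1<|y|<2}(|y|^2-1)^{-s}\,dy$ as $s\to1$.

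\textbf{A step to restate.} Your bound ``$t^{-1/s}\le t^{-1/(2s)-\theta}$'' is false for $t<1$. What works is to trade powers of $|h|$ for powers of $t$ on $\{t\ge|h|^{2s}\}$:
\[
|h|\,t^{-1/s}\le |h|^{\gamma}\,t^{-(1+\gamma)/(2s)}, \qquad \gamma=s_1-\tfrac12 .
\]
Since $(1+\gamma)/(2s)\le\frac12+\frac1{4s_1}<1$, the tail is at most $C\,\Gamma(\frac12-\frac1{4s_1})\,|h|^{\gamma}\|u+w\|_\infty$. The same trick with $\gamma=\beta s_1$ gives case (i) with constant $\Gamma(1-\frac{\beta}{2})$, which is what you had there.

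\textbf{The genuine gap: $s_1=\frac12$ in case (ii).} There your bound $\|\nabla u\|_\infty\le C\,\Gamma(1-\frac1{2s})\,\|u+w\|_\infty$ blows up as $s\downarrow\frac12$. ``Using $s>s_1$ strictly'' only yields an $s$-dependent constant.

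No argument can repair this, because the statement itself fails at that endpoint. Suppose $\|\nabla u\|_\infty\le C(n)(\|u\|_\infty+\|w\|_\infty)$ held for all $s\in(\frac12,1)$. Apply it to $u_s=G_s*g$, with $w=g-u_s$, and let $s\downarrow\frac12$. Then $(1+(-\Delta)^{1/2})^{-1}$ would map $L^\infty$ into $W^{1,\infty}$, i.e.\ $\nabla G_{1/2}$ would be a finite measure. But $|\nabla G_{1/2}(x)|\sim c|x|^{-n}$ near $0$, so this is impossible.

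The paper's proof breaks at the same place. At $s_1=\frac12$ it asserts $(-\Delta)^{-1}(\eta f)\in C^{1,\frac32-s_1}=C^{1,1}$ for bounded $f$, which is false. Also, Lemma~\ref{schauder2}, applied to $(-\Delta)^{1-s}$, needs $2(1-s_1)<\frac32-s_1$, i.e.\ $s_1>\frac12$.

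So (ii) should be stated for $s_1>\frac12$, and your proof covers exactly that range. This is harmless for the paper's applications, which only need a uniform $C^{0,\alpha}$ bound. Your case (i) computation already gives such a bound for every $s\in(s_1,1)$ and any $\alpha<\min(1,2s_1)$.
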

To prove Lemma \ref{schauder}, we need the following lemma,which follows from Propositions 2.5 and 2.6 in  \cite{S}.

\begin{lemma}\label{schauder2}
Let $n\geq1$, $0<s\leq s_2<\frac12$ and $u\in C^{i,\alpha}(\mathbb{R}^n)$ with $\alpha\in (2s_2,1]$, $i=0,1$. Then $(-\Delta)^su\in C^{i,\alpha-2s_2}(\mathbb{R}^n)$ and
\[
\|(-\Delta)^su\|_{C^{i,\alpha-2s_2}(\mathbb{R}^n)}\leq C\|u\|_{C^{i,\alpha}(\mathbb{R}^n)},
\]
where $C>0$ depends only on $n$, $\alpha$ and $s_2$.
\end{lemma}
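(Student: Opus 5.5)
We need to prove Lemma \ref{schauder2}: for $0<s\le s_2<\frac12$ and $u\in C^{i,\alpha}(\mathbb{R}^n)$ with $\alpha\in(2s_2,1]$, the bound $\|(-\Delta)^su\|_{C^{i,\alpha-2s_2}}\le C\|u\|_{C^{i,\alpha}}$ holds with $C$ depending only on $n,\alpha,s_2$. The plan is to work directly from the singular integral \eqref{eq:1.2}. Because $2s<1$ and $u$ is Hölder of order $\alpha>2s$, there is no principal value to take, and
\[
(-\Delta)^su(x)=D(n,s)\int_{\mathbb{R}^n}\frac{u(x)-u(x+z)}{|z|^{n+2s}}\,dz .
\]
Throughout we track the dependence on $s$. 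By \eqref{eq:1.3}, $D(n,s)\le C(n)\,s$ for $s\in(0,\frac12)$, since $\Gamma(\frac{n+2s}{2})$ and $\Gamma(2-s)^{-1}$ are bounded there. This factor $s$ is what will cancel the $1/(2s)$ coming from the integral near infinity.

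\textbf{Case $i=0$.} Set $\beta=\alpha-2s_2$ and $w=(-\Delta)^su$. For the sup bound, split the integral at $|z|=1$. The near part is at most $[u]_\alpha\int_{|z|<1}|z|^{\alpha-n-2s}\,dz=\frac{|S^{n-1}|}{\alpha-2s}[u]_\alpha$, and $\alpha-2s\ge\alpha-2s_2>0$. The far part is at most $2\|u\|_\infty\frac{|S^{n-1}|}{2s}$. After multiplying by $D(n,s)\le Cs$, both pieces are bounded by $C(n,\alpha,s_2)\|u\|_{C^{0,\alpha}}$.

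For the Hölder seminorm, fix $x_1,x_2$ with $h=|x_1-x_2|$. If $h\ge1$, the sup bound suffices. If $h<1$, split at $|z|=h$. For $|z|<h$, estimate each term separately by $[u]_\alpha|z|^\alpha$; this gives $C\frac{h^{\alpha-2s}}{\alpha-2s}[u]_\alpha$. For $|z|\ge h$, use
\[
|u(x_1)-u(x_1+z)-u(x_2)+u(x_2+z)|\le 2[u]_\alpha h^{\alpha}
\]
to obtain $C[u]_\alpha h^\alpha\frac{h^{-2s}}{2s}$, where the $1/(2s)$ is again absorbed by $D(n,s)$. In both ranges the result is $h^{\alpha-2s}\le h^{\alpha-2s_2}$ because $h<1$, so $[w]_{\beta}\le C[u]_\alpha$ with $C$ uniform in $s$.

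\textbf{Case $i=1$.} Differentiation commutes with $(-\Delta)^s$. This can be justified by difference quotients, using the uniform sup bound from case $i=0$ applied to the difference quotients (which lie in $C^{0,1}$) and dominated convergence, or by translation invariance. Then $\partial_j(-\Delta)^su=(-\Delta)^s\partial_ju$ with $\partial_ju\in C^{0,\alpha}$. Applying case $i=0$ to $u$ (which lies in $C^{0,1}\subset C^{0,\alpha'}$ for an $\alpha'\in(2s_2,1]$, or simply in $C^{0,\alpha}$ with the $C^1$ norm controlling it) and to each $\partial_ju$ yields the $C^{1,\alpha-2s_2}$ bound.

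The main obstacle is keeping the constant independent of $s$. The only delicate points are the two places where $\int|z|^{-n-2s}$ over $|z|\ge r$ produces a factor $\frac{r^{-2s}}{2s}$, which blows up as $s\to0$. The plan handles these through the factor $s(1-s)$ built into $D(n,s)$, while the near-origin integrals stay bounded because $\alpha-2s\ge\alpha-2s_2$. Justifying the differentiation under the integral in case $i=1$ is routine by dominated convergence.
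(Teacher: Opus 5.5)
Your proof is correct, and it follows the argument the paper relies on. The paper only cites Silvestre's Propositions 2.5 and 2.6, which rest on this same direct estimate of the singular integral (split at $|z|=|x_1-x_2|$) together with $\partial_j(-\Delta)^s u=(-\Delta)^s\partial_j u$. Your version also makes explicit what the citation leaves unchecked: the uniformity in $s\in(0,s_2]$, via $D(n,s)\le C(n)s$ absorbing the $\frac{1}{2s}$ tail factors, and the passage from $h^{\alpha-2s}$ to $h^{\alpha-2s_2}$ for $h<1$. The one slip is in case $i=1$: the difference quotients are uniformly bounded in $C^{0,\alpha}$ (by $\|\partial_j u\|_{C^{0,\alpha}}$), not in $C^{0,1}$, and it is this $C^{0,\alpha}$ bound that supplies the dominating function $C\min\{|z|^{\alpha},1\}|z|^{-n-2s}$.
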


\bigskip

{\bf Proof of Lemma \ref{schauder}.}
$(i)$ It suffices to prove that there exist $C=C(n,s_1)>0$ and $r>0$  such that for $x\in \mathbb{R}^n$,
\begin{equation*}
\|u\|_{C^{0,\,\beta s_1}(B_r(x))}\leq C(\|u\|_{L^\infty(\mathbb{R}^n)}+\|w\|_{L^\infty(\mathbb{R}^n)}).
\end{equation*}
This is equivalent to
\begin{equation*}
\|v\|_{C^{0,\,\beta s_1}(B_r(0))}\leq C(\|v\|_{L^\infty(\mathbb{R}^n)}+\|f\|_{L^\infty(\mathbb{R}^n)}),
\end{equation*}
where $v=u(\cdot-x)$ and $f=w(\cdot-x)$.

Let $\eta\in C_c^\infty(\mathbb{R}^n)$ be a function such that $\eta=1$ in $B_1$, $\eta=0$ in $B_2^c$ and $0\leq \eta\leq 1$.
Denote
\[
\tilde{v}=(-\Delta)^{-s}(\eta f).
\]
By  Proposition 2.8 in \cite{S}, for any $\beta\in(0,2)$,
\[
\|(-\Delta)^{-\frac12}(\eta f)\|_{C^{0, 1-(2-\beta)s_1}(\mathbb{R}^n)}\leq
C(n,\beta,s_1)[\|(-\Delta)^{-\frac12}(\eta f)\|_{L^\infty(\mathbb{R}^n)}+\|\eta f\|_{L^\infty(\mathbb{R}^n)}].
\]
While
\begin{equation*}
\begin{split}
|(-\Delta)^{-\frac12}(\eta f)|=C(n)\Big|\int_{\mathbb{R}^n}\frac{\eta f}{|x-y|^{n-1}}\,dx\Big|\leq C(n)\int_{B_2}\frac{1}{|x-y|^{n-1}}\,dx\|f\|_{L^\infty(\mathbb{R}^n)},
\end{split}
\end{equation*}
it yields
\[
\|(-\Delta)^{-\frac12}(\eta f)\|_{C^{0, 1-(2-\beta)s_1}(\mathbb{R}^n)}\leq
C(n,\beta,s_1)\| f\|_{L^\infty(\mathbb{R}^n)}.
\]
Hence we derive from  Lemma \ref{schauder2}  that,
\begin{equation}\label{shauder-4}
\begin{split}
\|\tilde{v}\|_{C^{0, \beta s_1}(\mathbb{R}^n)}&=\|(-\Delta)^{\frac12-s}(-\Delta)^{-\frac12}(\eta f)\|_{C^{0, \,\beta s_1}(\mathbb{R}^n)}\\
&\leq C(n,\beta,s_1)\|(-\Delta)^{-\frac12}(\eta f)\|_{C^{0, 1-(2-\beta)s_1}(\mathbb{R}^n)}\\
&\leq
C(n,\beta,s_1)\| f\|_{L^\infty(\mathbb{R}^n)}.
\end{split}
\end{equation}

Set $h=v-\widetilde{v}$, then $h$ solves
\begin{equation*}
\begin{cases}
(-\Delta)^s h=0 &\text{   in  } B_1, \\
h=v-\widetilde{v}       &\text{   in  } B_1^c .
\end{cases}
\end{equation*}
Equation $(4.5)$ in \cite{CLM} implies that
\begin{equation*}
h(x)=\int_{\mathbb{R}^n\setminus{B_1}}P_s(y, x)\Big(v(y)-\tilde{v}(y)\Big)\,dy,\quad x\in B_1
\end{equation*}
where
\begin{equation*}
P_s(y, x)=
\begin{cases}
\begin{aligned}
 &\frac{\Gamma(n/2)}{\pi^{\frac n 2+1}}sin (\pi s)\Big(\frac{1-|x|^2}{|y|^2-1}\Big)^s \frac{1}{|x-y|^n}, \quad |y|>1,  \\
 &0,       \quad |y|\leq 1.
 \end{aligned}
\end{cases}
\end{equation*}
Therefore, if $0<s_1<s<1$ and $x\in B_{1/2}$, we have
\begin{equation*}
\begin{split}
|h(x)|&\leq \int_{\mathbb{R}^n\setminus{B_1}}P_s(y, x)\,dy\Big(\|v\|_{L^\infty(\mathbb{R}^n)}+\|\tilde{v}\|_{L^\infty(\mathbb{R}^n)}\Big)\\
&\leq C(n)\Big[ \int_{1\leq |y|\leq 2}(|y|^2-1)^{-s}\,dy+\int_{ |y|\geq 2}{|y|^{-(n+2s_1)}}\,dy\Big]\Big(\|v\|_{L^\infty(\mathbb{R}^n)}+\|\tilde{v}\|_{L^\infty(\mathbb{R}^n)}\Big)\\
&\leq C(n,s_1)\Big(\|v\|_{L^\infty(\mathbb{R}^n)}+\|\tilde{v}\|_{L^\infty(\mathbb{R}^n)}\Big).
\end{split}
\end{equation*}
Similarly,  for  $k\geq 1$ we have
\begin{equation}\label{shauder-10}
\begin{split}
\|h\|_{C^k(B_{1/2}(0))}\leq C(n,s_1,k)\Big(\|v\|_{L^\infty(\mathbb{R}^n)}+\|\tilde{v}\|_{L^\infty(\mathbb{R}^n)}\Big).
\end{split}
\end{equation}
Combing \eqref{shauder-4} with \eqref{shauder-10}, we get the result:
\begin{equation*}
\begin{split}
\|v\|_{C^{0, \beta s_1}(B_{1/2}(0))}
&\leq \|\tilde{v}\|_{C^{0, \beta s_1}(C^1(B_{1/2}(0))}+\|h\|_{C^1(B_{1/2}(0))}\\
&\leq C(n,\beta,s_1)\Big(\|v\|_{L^\infty(\mathbb{R}^n)}+\|f\|_{L^\infty(\mathbb{R}^n)}\Big).
\end{split}
\end{equation*}

$(ii)$ Since
\[
\tilde{v}=(-\Delta)^{-s} (\eta f)=(-\Delta)^{1-s} (-\Delta)^{-1}(\eta f).
\]
The classical Schauder estimate yields that
\[
(-\Delta)^{-1}(\eta f)\in C^{1,\frac32-s_1}(\mathbb{R}^n)
\]
as well as
\begin{equation*}
\begin{split}
\|(-\Delta)^{-1}(\eta f)\|_{ C^{1,\frac32-s_1}(\mathbb{R}^n)}\leq C\|f\|_{L^\infty(\mathbb{R}^n)}.
\end{split}
\end{equation*}
So by Lemma \ref{schauder2} and $1-s\in (0,1/2]$, we infer that
\begin{equation}\label{AAA}
\begin{split}
\|\tilde{v}\|_{C^{1,s_1-\frac12}(\mathbb{R}^n)}
&=\|(-\Delta)^{1-s}(-\Delta)^{-1}(\eta f)\|_{C^{1,s_1-\frac12}(\mathbb{R}^n)}\\
&\leq C(n,s_1)\|(-\Delta)^{-1}(\eta f)\|_{ C^{1,\frac32-s_1}(\mathbb{R}^n)}\\
&\leq C(n,s_1)\|f\|_{L^\infty(\mathbb{R}^n)}.
\end{split}
\end{equation}
 The conclusion then follows from  \eqref{shauder-10},  \eqref{AAA} and $v=\tilde{v}+h$.\quad
$\Box$

\bigskip

 Next, we  establish a $L^\infty$ bound  for solutions of a differential inequality, which  is uniform in $s\in(0,1)$.  Inspired by Proposition 5.1.1 in \cite{DMV}, first  we derive a higher integrability of the solutions by  the Moser-Nash iteration.

\begin{lemma}\label{lem A.1} Let $n\geq3$.
Suppose that $u\in H^s(\mathbb{R}^n)$, $0<s<1$, is nonnegative and satisfies
\begin{equation}\label{lem:0.1-0}
(-\Delta)^su(x)\leq a(x)u \quad {\rm in }\quad \mathbb{R}^n.
\end{equation}
Then there exists a $\delta(n,s)>0$ such that if
\begin{equation}\label{lem:0.1-1a}
\int_{B_{2r}}|a(x)|^{\frac{n}{2s}}\,dx<\delta(n,s),
\end{equation}
we have
\[
\|u\|_{L^{\frac{(2_s^*)^2}{2}}(B_r)}\leq C(n,s,r)\|u\|_{L^{2_s^*}(\mathbb{R}^n)}.
\]
Moreover, there exist $\delta_0, C_0>0$, independent of $s$,  such that $\delta(n,s)>\delta_0$ and $C(n,s,r)<C_0$.
\end{lemma}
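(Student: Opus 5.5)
The plan is the Brezis--Kato / Moser-type argument from Proposition 5.1.1 of \cite{DMV}, run on the extension while tracking every constant in $s$. Let $U=E_{2s}(u)$ be the $2s$-harmonic extension of $u$. Then \eqref{lem:0.1-0} becomes, for every nonnegative test function $\phi$,
\[
\int_{\mathbb{R}_+^{n+1}}y^{1-2s}\nabla U\cdot\nabla\phi\,dX\leq \kappa_s\int_{\mathbb{R}^n}a(x)u\,\phi(x,0)\,dx .
\]
Fix a cutoff $\eta\in C_c^\infty(\mathbf{B}_{2r})$ with $\eta\equiv1$ on $\mathbf{B}_r$ and $|\nabla\eta|\le 2/r$. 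For $m>0$ put $U_m=\min(U,m)$, and test with
\[
\phi=\eta^2 U\,U_m^{2\beta},\qquad \beta=\frac{2^*_s-2}{2},
\]
so that $(1+\beta)\cdot 2=2^*_s$. Set $W=\eta\,U\,U_m^{\beta}$. As in \eqref{083001}, the left side dominates $c(1+\beta)^{-1}\int y^{1-2s}|\nabla W|^2\,dX$ minus $C\int y^{1-2s}|\nabla\eta|^2U^2U_m^{2\beta}\,dX$.

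On the right side, with $w=W(\cdot,0)$, Hölder gives
\[
\kappa_s\int a\,w^2\le \kappa_s\|a\|_{L^{n/(2s)}(B_{2r})}\|w\|_{2^*_s}^2 .
\]
By the trace inequality of Lemma \ref{weighted Sobolev inequality2}, $\|w\|_{2^*_s}^2\le S_1\int y^{1-2s}|\nabla W|^2\,dX$. So if
\[
\kappa_s\,\delta\,S_1(s,n)\le \frac{c}{2(1+\beta)},
\]
this term is absorbed into the left side. That condition defines $\delta(n,s)$. What remains is
\[
\|w\|_{2^*_s}^2\le C(n,s)\,r^{-2}\int_{\mathbf{B}_{2r}^+}y^{1-2s}U^2U_m^{2\beta}\,dX .
\]

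The bulk term on the right must be converted back to information on $u$. Let $m\to\infty$. The integrand is $U^{2^*_s}$ on a bounded set. Since $U$ is the Poisson extension, $U(\cdot,y)=P_s(\cdot,y)*u$ with a probability kernel, so $\|U(\cdot,y)\|_{2^*_s}\le\|u\|_{2^*_s}$. Integrating $y^{1-2s}$ over $0<y<2r$ costs a factor $(2r)^{2-2s}/(2-2s)$, which is bounded for $s\le s_2<1$. Strictly, to be rigorous one first keeps $U_m$ (finite) and applies the weighted Sobolev inequality, Lemma \ref{weighted Sobolev inequality}, where needed. The outcome is
\[
\|u\|_{L^{(2^*_s)^2/2}(B_r)}^{2^*_s}=\|u^{2^*_s/2}\|_{L^{2^*_s}(B_r)}^{2}\le C\|u\|_{2^*_s}^{2^*_s},
\]
which is the claimed estimate after taking the $1/2^*_s$ power.

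For uniformity in $s$, the key point is that $\kappa_s S_1(s,n)$ stays bounded above. Using $\kappa_s=2^{1-2s}\Gamma(1-s)/\Gamma(s)$, the product is an explicit combination of Gamma functions. The factors $\Gamma(s)$ cancel, and $\Gamma(1-s)$ is compensated by the normalization, since $\kappa_sS_1$ is essentially the reciprocal of the Sobolev constant $S(n,s)$ in \eqref{eq:1.8}. That constant is continuous and positive on $[0,1]$ because $n\ge3$ keeps $\Gamma(\frac{n-2s}{2})$ finite. The constants $c$ and $1+\beta=2^*_s/2\le n/(n-2)$ are also bounded. Hence $\delta(n,s)\ge\delta_0>0$.

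The main obstacle is the remaining constant: both the bulk bound and the cutoff term involve $y^{1-2s}$ weights, and their constants must be checked to stay bounded. The factor $1/(2-2s)$ blows up as $s\to1$. I would handle this by normalizing through $\kappa_s$, i.e. working directly with the identity $\kappa_s\int y^{1-2s}|\nabla U|^2=\|(-\Delta)^{s/2}u\|_2^2$, so that the degenerate factors cancel. Failing that, I would restrict to $s\in[s_1,s_2]$, as in the rest of this section, which is enough for the applications near $s_0$.
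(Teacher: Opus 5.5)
Your proof is correct in substance, but it takes a different route from the paper. The paper does not use the extension for this lemma. It works in $\mathbb{R}^n$ with the convex truncation $\varphi_{q,L}$, the fractional Sobolev inequality with constant $S(n,s)$, and the nonlocal cutoff estimate
\[
\|(-\Delta)^{s/2}(\eta\varphi_{q,L}(u))\|_2^2\le q\int_{\mathbb{R}^n}a\eta^2\varphi_{q,L}(u)^2\,dx+D(n,s)\int_{\mathbb{R}^n}\int_{\mathbb{R}^n}\frac{|\eta(x)-\eta(y)|^2}{|x-y|^{n+2s}}\varphi_{q,L}(u)(x)^2\,dy\,dx .
\]
It bounds the kernel integral by $C(n)\bigl[\frac{1}{(1-s)r^2}+\frac1s\bigr]$, and the factor $s(1-s)$ in $D(n,s)$ cancels both singularities, so its $C(n,s,r)$ is bounded on all of $(0,1)$.

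You instead run one Moser step on the Caffarelli--Silvestre extension, using the local algebra of Lemma \ref{harnack}. You absorb the potential term through the trace inequality and control the cutoff term by the $L^{2^*_s}$-contractivity of the Poisson kernel. This replaces the nonlocal convexity/commutator inequality, which the paper states without proof, by purely local calculus. The price is that you must track $\kappa_s$, $S_1$ and $\int_0^{2r}y^{1-2s}\,dy$.

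Your first option for the ``main obstacle'' works, and you should write it out.

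\begin{itemize}
\item Use $S_1=(\kappa_sS(n,s))^{-1}$. This follows from $\int_{\mathbb{R}^{n+1}_+}y^{1-2s}|\nabla E_{2s}(u)|^2\,dX=\kappa_s\|(-\Delta)^{s/2}u\|_2^2$ together with the minimality of the extension.
\item Do not use the expression printed in Lemma \ref{weighted Sobolev inequality2}. It lacks the factor $\Gamma(1-s)$ in its denominator, and with it $\kappa_sS_1$ would blow up as $s\to1$.
\end{itemize}

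With the correct $S_1$, your final constant is
\[
\frac{C(1+\beta)}{S(n,s)}\cdot\frac{(2r)^{2-2s}}{r^2}\cdot\frac{1}{\kappa_s(2-2s)},\qquad \kappa_s(2-2s)=\frac{2^{2-2s}\Gamma(2-s)}{\Gamma(s)}.
\]
So the factor $1/(2-2s)$ is cancelled exactly, and the bound is uniform on $[s_1,1)$ for every $s_1>0$. No restriction $s\le s_2<1$ is needed. Unlike the paper's constant, yours grows like $1/s$ as $s\to0$.

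There is also a slip in the absorption condition: it should read $\delta^{2s/n}\le cS(n,s)/(2(1+\beta))$, not involve $\delta$ itself. Hence $\delta(n,s)=\bigl(cS(n,s)/(2(1+\beta))\bigr)^{n/(2s)}$, which is bounded below only for $s$ away from $0$. The paper's choice $\delta(n,s)=(S(n,s)/(2q))^{n/(2s)}$ has the same limitation. This is harmless, because the lemma is only applied for $s$ near $s_0$.
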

\begin{proof}
Let $q>1$ and $L>0$. Set a convex function
\[
\varphi_{q,L}(t) =
\begin{cases}
0, & \text{if } t< 0, \\
t^q,        & \text{if } 0 \leq t < L, \\
qL^{q-1}(t-L)+L^q,  & \text{if } t \geq L.
\end{cases}
\]

Let $\eta\in C^1_c(\mathbb{R}^n)$ be a function such that $\eta=1$ in $B_{r}$, $\eta=0$ in $B_{2r}^c$, $0\leq \eta\leq 1$, and $|\nabla \eta|\leq \frac{2}{r}$.

We may verify that
\begin{equation}\label{lem:0.1-4}
\begin{split}
 \Big(\int_{B_{2r}}|\eta \varphi_{q,L}(u)|^{2_s^*}\,dx\Big)^{\frac2{2_s^*}}&\leq S^{-1}(n,s)\int_{\mathbb{R}^n}|(-\Delta)^{s/2}(\eta \varphi_{q,L}(u))|^{2}\,dx\\
&\leq qS^{-1}(n,s)\int_{\mathbb{R}^n}a(x)\eta^2(x) |\varphi_{q,L}(u)(x)|^2\,dx\\
&\quad+S^{-1}(n,s){D(n,s)}\int_{\mathbb{R}^n}\int_{\mathbb{R}^n}\frac{|\eta(x)-\eta(y)|^2}{|x-y|^{n+2s}}|\varphi_{q,L}(u)(x)|^2\,dydx.
\end{split}
\end{equation}
By the H\"{o}lder inequality,
\begin{equation*}
\int_{B_{2r}}a(x)\eta^2(x) |\varphi_{q,L}(u)(x)|^2\,dx\leq \Big(\int_{B_{2r}}a(x)^{\frac n{2s}}\,dx\Big)^{\frac {2s}n}\Big(\int_{B_{2r}}|\eta \varphi_{q,L}(u)|^{2_s^*}\,dx\Big)^{\frac2{2_s^*}}.
\end{equation*}
Choose $\delta(n,s)$ such that
\begin{equation}\label{lem:0.1-6}
qS^{-1}(n,s)\delta(n,s)^{\frac{2s}{n}}=\frac12,
\end{equation}
and in the rest of the proof, we assume that \eqref{lem:0.1-1a} holds.

Whence,
\begin{equation}\label{lem:0.1-7}
\Big(\int_{B_{2r}}|\eta \varphi_{q,L}(u)|^{2_s^*}\,dx\Big)^{\frac2{2_s^*}}
\leq 2S^{-1}(n,s){D(n,s)}\int_{\mathbb{R}^n}\int_{\mathbb{R}^n}\frac{|\eta(x)-\eta(y)|^2}{|x-y|^{n+2s}}|\varphi_{q,L}(u)(x)|^2\,dydx
\end{equation}
by \eqref{lem:0.1-4}--\eqref{lem:0.1-6}.

Now we estimate the term on the right hand of \eqref{lem:0.1-7}.

Since $|\nabla \eta|\leq 2r^{-1}$,
\begin{equation*}
\begin{split}
\int_{|x-y|\leq 1}\frac{|\eta(x)-\eta(y)|^2}{|x-y|^{n+2s}}\,dy
&\leq \int_{|x-y|\leq 1}\frac{4r^{-2}|x-y|^2}{|x-y|^{n+2s}}\,dy=\frac{C(n)\pi^{n/2}}{(1-s)r^2}.
\end{split}
\end{equation*}
Noting that $0<\eta\leq 1$, we have
\begin{equation}\label{lem:0.1-9}
\begin{split}
\int_{|x-y|\geq 1}\frac{|\eta(x)-\eta(y)|^2}{|x-y|^{n+2s}}\,dy\leq\int_{|x-y|\geq 1}\frac{4}{|x-y|^{n+2s}}\,dy=\frac{C(n)\pi^{n/2}}{s}.
\end{split}
\end{equation}
Hence,  \eqref{lem:0.1-7}--\eqref{lem:0.1-9} imply that
\begin{equation}\label{lem:0.1-10}
\Big(\int_{B_{2r}}|\eta \varphi_{q,L}(u)|^{2_s^*}\,dx\Big)^{\frac2{2_s^*}}
\leq 2S^{-1}(n,s){D(n,s)}{C(n)\pi^{n/2}}\Big[\frac{1}{(1-s)r^2}+\frac{1}{s }\Big]\int_{\mathbb{R}^n}|\varphi_{q,L}(u)(x)|^2\,dx.
\end{equation}

Setting $q=\frac{2_s^*}2$ and  letting $L\to +\infty$ in \eqref{lem:0.1-10}, we obtain
\begin{equation*}
\begin{split}
\left( \int_{B_{r}} |u|^{\frac{(2_s^*)^2}{2}} \, dx \right)^{\frac{2}{2_s^*}}
&\leq 2S^{-1}(n,s)D(n,s) C(n){4\pi^{n/2}} \left[ \frac{1}{(1-s)r^2} + \frac{1}{s} \right] \int_{\mathbb{R}^n} |u(x)|^{2_s^*} \, dy \, dx\\
&:=C(n,s,r)\int_{\mathbb{R}^n} |u(x)|^{2_s^*}  \, dx.
\end{split}
\end{equation*}
By \eqref{eq:1.3}, \eqref{eq:1.8} and  the definitions of $\delta(n,s)$ and $C(n,s,r)$,  there exist $\delta>0$ and $C_0>0$, independent of $s$,  such that $\delta(n,s)>\delta$ and $C(n,s,r)<C_0$.
\end{proof}

Finally, we establish a uniform $L^\infty$ estimate in $s$ for  the solutions.

\begin{lemma}\label{lem:A.3}
 Let $u\in H^s(\mathbb{R}^n), 0<s<1,$  be a nonnegative solution of \eqref{lem:0.1-0}.

If $a\in L^t(\mathbb{R}^n)$ with $t>\frac{n}{2s}$ and
\[
\|a\|_{L^t(\mathbb{R}^n)}\leq C_a,
\]
then
\[
\|u\|_\infty\leq C(n,t,s,C_a)\|u\|_{2_s^*}.
\]
Moreover, for fixed $s_0\in (0,1)$, if $t>\frac{n}{2s_0}$ and $C_a$ is independent of $s$, $C(n,t,s, C_a)$ is uniformly bounded in $(s_0,1)$.
\end{lemma}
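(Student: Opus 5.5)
The plan is a Moser--Nash iteration on the whole of $\mathbb{R}^n$, in the spirit of Proposition 5.1.1 in \cite{DMV}, keeping track of every constant so that it depends on $s$ only through $S(n,s)^{-1}$, $D(n,s)$, $1/s$ and $1/(1-s)$.

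\textbf{Step 1: energy inequality.} For $q\ge1$ and $L>0$ take the truncated convex function $\varphi_{q,L}$ from the proof of Lemma \ref{lem A.1}. We use the convexity inequality $(-\Delta)^s\varphi(u)\le\varphi'(u)(-\Delta)^s u$ together with \eqref{lem:0.1-0}, with no cutoff $\eta$ since we work globally. Testing with $\varphi_{q,L}(u)$ and applying the fractional Sobolev inequality \eqref{eq:1.7} gives
\[
\|\varphi_{q,L}(u)\|_{2_s^*}^2\le S(n,s)^{-1}\,q\int_{\mathbb{R}^n}|a|\,\varphi_{q,L}(u)^2\,dx,
\]
where we used $\varphi'\varphi\le q\varphi^2/u$ in the appropriate sense. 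Letting $L\to\infty$ yields the same bound with $u^q$ in place of $\varphi_{q,L}(u)$.

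\textbf{Step 2: splitting the potential.} Split $a=a\chi_{\{|a|>M\}}+a\chi_{\{|a|\le M\}}$. By Hölder with exponents $\frac{n}{2s}$ and $\frac{n}{n-2s}$,
\[
\int|a|\chi_{\{|a|>M\}}u^{2q}\le\|a\chi_{\{|a|>M\}}\|_{n/2s}\,\|u^q\|_{2_s^*}^2 .
\]
Since $t>n/(2s)$, we have $\|a\chi_{\{|a|>M\}}\|_{n/2s}\le C_a^{t\cdot 2s/n}M^{-(t-n/2s)2s/n}$. Choosing $M=M(q)$ so that $qS(n,s)^{-1}$ times this quantity is at most $\frac12$, we can absorb that term into the left-hand side. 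This leaves
\[
\|u\|_{2_s^* q}^{2q}\le 2qS(n,s)^{-1}M(q)\|u\|_{2q}^{2q},
\]
with $M(q)$ growing polynomially in $q$.

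\textbf{Step 3: the iteration.} The bound in Step 2 needs $\|u\|_{2q}$, whereas we want to iterate starting from $L^{2_s^*}$. We therefore handle the low part more cleverly. We interpolate $\|u\|_{2q}$ between $L^{2_s^*}$-type norms, or, more simply, we use Hölder again on the bounded part:
\[
\int_{\{|a|\le M\}}|a|u^{2q}\le\|a\|_t\,\|u^{q}\|_{2t'}^2 .
\]
Here $2t'<2_s^*$ because $t>n/(2s)$, and we interpolate $\|u^q\|_{2t'}$ between $\|u^q\|_{2_s^*}$ and $\|u^q\|_{2_s^*/2\cdot\,?}$. The cleanest choice is to follow \cite{DMV}. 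We set $q_0=\frac{2_s^*}{2}$ and use Lemma \ref{lem A.1} (with $\delta_0$ uniform, via absolute continuity of $\int|a|^{n/2s}$ on small balls, covering $\mathbb{R}^n$ by balls of uniform radius chosen from $C_a$ and $t$ only) to get $u\in L^{(2_s^*)^2/2}$ with a uniform bound. We then iterate the Step 2 inequality with $q_k=\chi^k q_0$, $\chi=2_s^*/2\,t'>1$, obtaining
\[
\|u\|_{2_s^*q_{k+1}}\le (Cq_k^{\,m})^{1/(2q_k)}\|u\|_{2_s^*q_k}.
\]
Since $\sum\log(Cq_k^m)/q_k<\infty$, we get $\|u\|_\infty\le C\|u\|_{2_s^*}$.

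\textbf{Step 4: uniformity in $s$.} This is the main obstacle, namely checking that every constant stays bounded for $s\in(s_0,1)$ given $t>n/(2s_0)$. The key facts are the following. First, $S(n,s)$ is bounded below on $[s_0,1]$ by \eqref{eq:1.8}. Second, $\chi=\frac{n}{n-2s}\cdot\frac{t-1}{t}$ is bounded below by $\frac{n(t-1)}{(n-2s_0)t}>1$. Third, the exponents $(t-n/2s)^{-1}$ entering $M(q)$ are bounded because $t-n/(2s)\ge t-n/(2s_0)>0$. Fourth, Lemma \ref{lem A.1} already provides $\delta_0$ and $C_0$ independent of $s$; the factor $1/(1-s)$ appearing there is cancelled by $D(n,s)\sim s(1-s)$, which is why its constant is uniform. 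The remaining issue is that the covering radius in Step 3 must be chosen uniformly. We obtain it from $\int_{B_{2r}}|a|^{n/2s}\le |B_{2r}|^{1-n/(2st)}C_a^{n/2s}$, whose exponent $1-n/(2st)\ge1-n/(2s_0t)>0$ is bounded below uniformly.
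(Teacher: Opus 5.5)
\textbf{Gap in the route you commit to.} Your proposal already contains a correct proof, and it is the paper's. Step~1 with no cutoff, followed by the global H\"older bound $\int|a|\varphi_{q,L}(u)^2\le\|a\|_t\|\varphi_{q,L}(u)\|_{2t'}^2$ with $t'=t/(t-1)$, gives $\|u\|_{2_s^*q}^2\le(qS(n,s)^{-1}C_a)^{1/q}\|u\|_{2qt'}^2$, using $2t'<2_s^*$. The paper iterates this along $q_j=\chi^{j+1}$ with $\chi=\frac{2_s^*(t-1)}{2t}$. Then $2q_0t'=2_s^*$, so the first step needs only $u\in L^{2_s^*}$. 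Your Step~4 bookkeeping ($S(n,s)^{-1}$ bounded, $\chi\ge\frac{n(t-1)}{(n-2s_0)t}>1$) is exactly what makes the constant uniform on $(s_0,1)$. No splitting of $a$ and no Lemma~\ref{lem A.1} are needed.

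The path you actually choose in Step~3 has a step that fails as written. Lemma~\ref{lem A.1} bounds $\|u\|_{L^{(2_s^*)^2/2}(B_r)}$ by the \emph{global} norm $\|u\|_{L^{2_s^*}(\mathbb{R}^n)}$. This is because the nonlocal term $\iint|\eta(x)-\eta(y)|^2|x-y|^{-n-2s}\varphi(u)(x)^2$ in its proof integrates over all $x\in\mathbb{R}^n$. So covering $\mathbb{R}^n$ by infinitely many balls of fixed radius and summing gives the bound $\sum_i(C_0\|u\|_{2_s^*})^{(2_s^*)^2/2}=\infty$, not a global $L^{(2_s^*)^2/2}$ bound. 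To rescue it you would have to reopen that proof, keep the weights $\int|\eta_i(x)-\eta_i(y)|^2|x-y|^{-n-2s}\,dy$, and show they sum to a bounded function over a lattice of centers.

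The obstruction that sent you there is also illusory. Step~2, whose right-hand side is $\|u\|_{2q}$, iterates directly from $L^{2_s^*}$ if you take $q_0=2_s^*/2$ and ratio $2_s^*/2$. That would itself be a valid alternative proof, since $M(q)=(2qS(n,s)^{-1}C_a^{2st/n})^{n/(2st-n)}$ is polynomial in $q$ uniformly for $s\in(s_0,1)$. Conversely, the recursion with ratio $\chi$ that you display does not follow from Step~2; it follows from the H\"older bound. Drop the Lemma~\ref{lem A.1} detour and use either of these two iterations consistently.
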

\begin{proof}
Letting $r\to+\infty$ in \eqref{lem:0.1-4}, we have
\begin{equation*}
\begin{split}
\Big(\int_{\mathbb{R}^n}| \varphi_{q,L}(u)|^{2_s^*}\,dx\Big)^{\frac2{2_s^*}}
&\leq qS^{-1}(n,s)\Big(\int_{\mathbb{R}^n}a(x)^t \,dx\Big)^{1/t}\Big(\int_{\mathbb{R}^n}\varphi_{q,L}(u)^{\frac{2t}{t-1}}(x)\,dx\Big)^{1-1/t}.
\end{split}
\end{equation*}
If $u\in L^{\frac{2qt}{t-1}}(\mathbb{R}^n)$, let $L\to+\infty$,  then
\begin{equation}\label{lem:A.3-2}
\|u\|_{q2_s^*}^2\leq q^{1/q}C(n,s,C_a)^{1/q}\|u\|_{\frac{2qt}{t-1}}^2,
\end{equation}
where $C(n,s,C_a):=S^{-1}(n,s)C_a$.

We choose in \eqref{lem:A.3-2} that $q=q_0$ such that $\frac{2q_0t}{t-1}=2_s^*$,  and successively   $q=q_j$, where $q_j=\frac{2_s^*(t-1)}{2t}q_{j-1}=\Big(\frac{2_s^*(t-1)}{2t}\Big)^{j+1}$, $j=1,2,\cdots$.
Then
\begin{equation*}
\begin{split}
\|u\|_{q_j2_s^*}^2&\leq q_j^{1/q_j}[1+C(n,s,C_a)]^{1/q_j}\|u\|_{\frac{2q_jt}{t-1}}^2\leq e^{\sum_{k=0}^j{(\log q_k})/{q_k}}[1+C(n,s,C_a)]^{\sum_{k=0}^j 1/q_k}\|u\|_{2_s^*}^2.
\end{split}
\end{equation*}

Since $\frac{2_s^*(t-1)}{2t}>1$,  letting $j\to +\infty$, we obtain
\[
\|u\|_{\infty}\leq C(n,t,s, C_a)\|u\|_{2_s^*},
\]
where
\[
C(n,t,s, C_a)^2:=e^{\sum_{k=0}^\infty{(\log q_k})/{q_k}}[1+C(n,s,C_a)]^{\sum_{k=0}^\infty 1/q_k}.
\]

Finally,  if  $t>\frac{n}{2s_0}$ and $C_a$ is independent of $s$,  and then $C(n,t,s, C_a)$ is uniformly bounded for $s\in(s_0,1)$.

\end{proof}

\bigskip

\bigskip
\section{Asymptotic behaviour of ground state solutions as $s\uparrow1$}

\bigskip

In this section, we study asymptotic behaviour of ground state solutions of \eqref{eq:1.1} as  $s\uparrow1$.

\bigskip

We recall that $S_{V_\infty}(n,s)$ is defined in \eqref{eq:1.4} with $V=V_\infty$.
\begin{lemma}\label{sjx} There  holds that
\[
\lim_{s\uparrow1}S_{V_\infty}(n,s)=S_{V_\infty}(n,1).
\]
\end{lemma}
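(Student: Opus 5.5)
We prove $\limsup_{s\uparrow1}S_{V_\infty}(n,s)\le S_{V_\infty}(n,1)$ and $\liminf_{s\uparrow1}S_{V_\infty}(n,s)\ge S_{V_\infty}(n,1)$. Throughout we work in Fourier variables, where
\[
\|(-\Delta)^{s/2}u\|_2^2=\int_{\mathbb{R}^n}(2\pi|\xi|)^{2s}|\mathcal{F}(u)(\xi)|^2\,d\xi .
\]

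\textbf{Upper bound.} Fix any $u\in H^1(\mathbb{R}^n)\setminus\{0\}$; for instance a minimizer of $S_{V_\infty}(n,1)$, which exists since $V_\infty$ is a constant. Use $u$ as a test function in \eqref{eq:1.4} with $V=V_\infty$. The pointwise bound $(2\pi|\xi|)^{2s}\le 1+(2\pi|\xi|)^2$ gives an integrable majorant $(1+(2\pi|\xi|)^2)|\mathcal{F}(u)|^2$. Since $(2\pi|\xi|)^{2s}\to(2\pi|\xi|)^2$ pointwise, dominated convergence yields $\|(-\Delta)^{s/2}u\|_2^2\to\|\nabla u\|_2^2$. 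The other two terms in $E_{V_\infty,s}(u)$ do not depend on $s$. Hence $E_{V_\infty,s}(u)\to E_{V_\infty,1}(u)$, and therefore $\limsup_{s\uparrow1}S_{V_\infty}(n,s)\le S_{V_\infty}(n,1)$.

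\textbf{Lower bound (the main step).} Take $s_k\uparrow1$ realizing the $\liminf$, and let $w_k$ be positive radial minimizers of $S_{V_\infty}(n,s_k)$, normalized so that $\|w_k\|_p=1$. These exist by the results cited for constant $V$ (\cite{DPV,FLS}). By the upper bound, $\|(-\Delta)^{s_k/2}w_k\|_2^2+V_\infty\|w_k\|_2^2$ is bounded. One could try to pass to the limit directly using $(2\pi|\xi|)^{2s_k}\ge(2\pi|\xi|)^2-C(1-s_k)\ldots$, but the tails make this awkward. Instead we use an interpolation-free trick: for any fixed $\sigma<1$ and all $s_k\ge\sigma$, the elementary inequality $t^{2\sigma}\le 1+t^{2s_k}$ shows that $\{w_k\}$ is bounded in $H^\sigma(\mathbb{R}^n)$, uniformly in $k$. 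For radial functions the embedding $H^\sigma_{rad}\hookrightarrow L^p$ is compact as long as $p<2^*_\sigma$ (Lions' compactness lemma). Choosing $\sigma$ close to $1$ so that $p<2^*_\sigma$, we extract a subsequence with $w_k\to w$ in $L^p$, hence $\|w\|_p=1$ and in particular $w\ne0$. Moreover $\mathcal{F}(w_k)\rightharpoonup\mathcal{F}(w)$ weakly in $L^2$.

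Applying Fatou's lemma on the Fourier side (after passing to an a.e.\ convergent subsequence, or using weak lower semicontinuity) we obtain
\[
\int(2\pi|\xi|)^2|\mathcal{F}(w)|^2\,d\xi\le\liminf_{k\to\infty}\int(2\pi|\xi|)^{2s_k}|\mathcal{F}(w_k)|^2\,d\xi .
\]
Since the integrands change with $k$, this needs justification. It follows from the weak lower semicontinuity of $f\mapsto\int m|f|^2$ for a fixed weight $m$, combined with the bound $(2\pi|\xi|)^{2s_k}\ge\min\{(2\pi|\xi|)^{2},\,R^{2}\}-o_k(1)$ on the ball $|2\pi\xi|\le R$, and then letting $R\to\infty$. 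This shows $w\in H^1$. Also $\|w\|_2^2\le\liminf_k\|w_k\|_2^2$. Consequently
\[
S_{V_\infty}(n,1)\le E_{V_\infty,1}(w)\le\liminf_{k\to\infty}S_{V_\infty}(n,s_k),
\]
which is the required lower bound.

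The expected obstacle is the compactness in the lower bound. We handle it with radial symmetry (symmetric decreasing rearrangement does not increase the fractional Gagliardo energy, which justifies taking $w_k$ radial) together with the uniform $H^\sigma$ bound; this avoids concentration-compactness arguments that would depend on $s$.
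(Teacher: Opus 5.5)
Your proof is correct, but it follows a different route from the paper's. The paper proves the lemma in two lines. It takes positive radial minimizers $\varphi_s$ of $S_{V_\infty}(n,s)$, which solve \eqref{eq:1.1} with $V=V_\infty$. It imports from Frank--Lenzmann--Silvestre the convergence $\varphi_s\to\varphi_1$ in $L^p(\mathbb{R}^n)$ as $s\uparrow1$. It then reads off the convergence of the infima from $\|\varphi_s\|_p^p=S_{V_\infty}(n,s)^{p/(p-2)}$, i.e.\ \eqref{keyequality}. The real work sits inside that cited convergence theorem, which relies on uniform-in-$s$ bounds and identifies the limit through uniqueness of the ground state of the local problem with constant potential.

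Your argument is self-contained and purely variational. The upper bound uses a fixed $H^1$ test function and dominated convergence on the Fourier side. For the lower bound you handle the $s$-dependence of the function spaces by freezing one exponent $\sigma<1$ with $p<2^*_\sigma$. Compactness of $H^\sigma_{\mathrm{rad}}\hookrightarrow L^p$ then gives $\|w\|_p=1$. Weak lower semicontinuity of the quadratic form on the balls $\{2\pi|\xi|\le R\}$, where $(2\pi|\xi|)^{2s_k}\to(2\pi|\xi|)^2$ uniformly, completes the argument.

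What your route buys:
\begin{itemize}
\item It needs no uniqueness for $(P_1)$, and not even existence of minimizers at level $s_k$; rearranged near-minimizers would suffice.
\item It yields, as a by-product, subsequential $L^p$ convergence of normalized radial minimizers to some minimizer of $S_{V_\infty}(n,1)$.
\item It mirrors the paper's own proof of Lemma \ref{s1}, with radial compactness replacing the Brezis--Lieb splitting. That splitting step in Lemma \ref{s1} itself uses the present lemma, so your version avoids a layer of dependence.
\end{itemize}

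One small correction. Drop the parenthetical appeal to Fatou after ``passing to an a.e.\ convergent subsequence''. Weak $L^2$ convergence of $\mathcal{F}(w_k)$ does not give a.e.\ convergence, so that step is unjustified. The lower semicontinuity argument on balls that you give next is the right justification and should stand alone.
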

\begin{proof}
By \cite{BL,DPV}, there exists  a positive radial minimizer $\varphi_s$ of $S_{V_\infty}(n,s)$ and  $\varphi_s$ solves \eqref{eq:1.1} with $V=V_\infty$ for any $s\in (s_0,1]$. Since $\varphi_s\to \varphi_1$
in $L^p(\mathbb{R}^n)$ as $s\uparrow1$(see \cite{{FLS}}) and \eqref{keyequality} with $V=V_\infty$, the conclusion follows.

\end{proof}

\bigskip

Let $u_s$   be a ground state solution $u_s$ of  \eqref{eq:1.1} , which is also minimizer of $S_V(n,s)$, $s_0<s<1$. We have a similar result.

\begin{lemma}\label{s1} There holds
\begin{equation}\label{limit}
\lim_{s\uparrow1}S_V(n,s)=S_V(n,1).
\end{equation}
Moreover, for any $\{s_k\}\uparrow 1$, the corresponding minimizer sequence $\{u_{s_k}\}$ converges to a minimizer $v$ of $S_V(n,s)$,  $\|v\|_p^p=1$, in the sense that
 \[
 \lim_{k\to\infty}\frac{u_{s_k}}{\|u_{s_k}\|_p}=v
 \]
     in   $L^p(\mathbb{R}^n)$  and   $ C_{loc}(\mathbb{R}^n)$ up to subsequence.
\end{lemma}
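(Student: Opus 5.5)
Two things have to be proved: the limit $\lim_{s\uparrow1}S_V(n,s)=S_V(n,1)$, and compactness of the normalized minimizers. Write $v_k=u_{s_k}/\|u_{s_k}\|_p$. For the limit I give an upper and a lower bound.

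\emph{Upper bound.} Take a ground state $u_1$ of $(P_1)$; it decays exponentially and is smooth. Use it, or a cutoff $C_c^\infty$ approximation of it, as a test function in $E_{V,s}$. By the Fourier representation, $\int(2\pi|\xi|)^{2s}|\mathcal F u|^2\,d\xi\to\int(2\pi|\xi|)^2|\mathcal F u|^2\,d\xi$ as $s\uparrow1$, by dominated convergence with $|\xi|^{2s}\le 1+|\xi|^2$. Hence $\limsup_{s\uparrow1}S_V(n,s)\le S_V(n,1)$. In particular $S_V(n,s)$ is bounded above uniformly for $s$ near $1$.

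\emph{Uniform bounds.} Combining this with \eqref{keyequality}, $\|u_s\|_p^p$, $\int V u_s^2$ and $\|(-\Delta)^{s/2}u_s\|_2^2$ are uniformly bounded. Since $V\ge V_0$, we also get $\|u_s\|_{H^s}\le C$. The same identity together with the fractional Sobolev inequality \eqref{eq:1.7} (whose constant is uniform near $s=1$ by \eqref{eq:1.8}) bounds $S_V(n,s)$ below. Therefore $\|u_s\|_p$ stays away from $0$ and $\infty$.

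\emph{Regularity and local convergence.} Apply Lemma \ref{lem:A.3} with $a=u_s^{p-2}$, after checking that $\|a\|_{L^t}$ is uniformly bounded for some $t>n/(2s_0)$; this should come by interpolating $L^p$ against the first Moser steps. This gives $\|u_s\|_\infty\le C$ uniformly in $s$. Lemma \ref{schauder}(ii) with $w=u_s^{p-1}-Vu_s$ then yields uniform $C^{1,\beta}$ bounds. By Arzel\`a--Ascoli, along a subsequence $u_{s_k}\to u$ in $C_{loc}(\mathbb R^n)$ (with $\nabla u_{s_k}\to\nabla u$ locally as well), and $u\ge0$.

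\emph{Passing to the limit.} Test \eqref{eq:1.1} against $\varphi\in C_c^\infty$ written as $\int u_s(-\Delta)^s\varphi$, and use $(-\Delta)^s\varphi\to-\Delta\varphi$ in $L^2$. This shows that $u$ solves $(P_1)$ weakly. Lower semicontinuity on the Fourier side (Fatou in $\xi$ for the $L^2$-weak limit) gives $u\in H^1$ with $\|\nabla u\|_2^2+\int Vu^2\le\liminf_k\big(\|(-\Delta)^{s_k/2}u_{s_k}\|_2^2+\int Vu_{s_k}^2\big)$.

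\emph{Main obstacle: ruling out $u=0$ and loss of mass at infinity.} Suppose first that $u\equiv0$, i.e. the maxima $x_s$ escape. The uniform Harnack inequality of Lemma \ref{lem:2.4} applied around the maximum points, plus a lower bound on $\|u_s\|_\infty$ (from $\|u_s\|_p^p\le\|u_s\|_\infty^{p-2}\|u_s\|_2^2$ and the lower bound on $\|u_s\|_p$), gives nontrivial local mass near $x_s$. If $|x_s|\to\infty$, the translates $u_s(\cdot+x_s)$ converge to a nonzero solution of the problem with $V$ replaced by $V_\infty$. The energy would then be at least $S_{V_\infty}(n,1)=\lim S_{V_\infty}(n,s)$ by Lemma \ref{sjx}, while $S_V(n,1)<S_{V_\infty}(n,1)$ by $(V1)$ (the standard strict inequality, obtained by testing with a translate of the $V_\infty$-ground state). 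This contradicts the upper bound, so $x_s$ stays bounded and $u\not\equiv0$.

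\emph{Conclusion.} With the splitting $u_s=u+(u_s-u)$ (Brezis--Lieb in $L^p$ and the quadratic form), the same comparison with $S_{V_\infty}$ rules out a remainder carrying positive mass. This gives $\liminf_{s\uparrow1} S_V(n,s)\ge E_{V,1}(u)\ge S_V(n,1)$, hence \eqref{limit}, and also $\|u_{s_k}\|_p\to\|u\|_p$, so $u_{s_k}\to u$ in $L^p$. Then $u/\|u\|_p=v$ attains $S_V(n,1)$, and dividing by the convergent norms gives the stated convergence of $u_{s_k}/\|u_{s_k}\|_p$ in $L^p$ and $C_{loc}$.
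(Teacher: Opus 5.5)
Your proposal is correct and is essentially the paper's argument. Both proofs use the same steps: a test-function upper bound; uniform $L^\infty$ and H\"older bounds from Lemma \ref{lem:A.3} and Lemma \ref{schauder}; exclusion of vanishing and dichotomy by comparing the escaping part with $S_{V_\infty}(n,s_k)\to S_{V_\infty}(n,1)>S_V(n,1)$ (Lemma \ref{sjx}) via a Brezis--Lieb splitting; and lower semicontinuity to identify the limit as a minimizer of $S_V(n,1)$.

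The paper is more economical in two places, both of which simplify your write-up:
\begin{itemize}
\item In Lemma \ref{lem:A.3} it simply takes $t=p/(p-2)=n/(2s_0)$. Since this exceeds $n/(2s_1)$ for $s_1=(s_0+1)/2$, it already gives uniform constants for $s\in(s_1,1)$, so no extra integrability or interpolation is needed.
\item It rules out a zero local limit directly. If the limit vanishes, then $\int(V-V_\infty)u_{s_k}^2\to0$, which forces $\liminf S_V(n,s_k)\ge S_{V_\infty}(n,1)$. So your Harnack and translation argument is unnecessary; your own splitting inequality with $l=0$ already covers this case.
\end{itemize}
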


\begin{proof}

We first prove that
\begin{equation}\label{sjx-1}
\limsup_{s\uparrow1}S_{V}(n,s)\leq S_{V}(n,1).
\end{equation}
Let $\varphi$ be a positive minimizer of $S_{V_\infty}(n,1)$ .  It is well known that  $\varphi$ is exponential decaying at  infinity. The inequality \eqref{sjx-1} follows from $S_{V}(n,s)\leq E_{V,s}(\varphi)$.

Let
\[
v_s=\frac{u_s}{\|u_s\|_p}.
\]
Then $v_s$ is also  a minimizer of $S_V(n,s)$, which satisfies
\begin{equation}\label{ws}
(-\Delta)^sv_s+Vv_s=S_V(n,s)|v_s|^{p-2}v_s,\quad {\rm in} \quad \mathbb{R}^n.
\end{equation}

 By \eqref{sjx-1}, we deduce that for $s<1$ and closed to $1$, $S_V(n,s)$ is uniformly bounded.
Therefore,
\begin{equation}\label{ws-1}
\int_{\mathbb{R}^n}|(-\Delta)^{s/2}v_s|^2+|v_s|^2\,dx=S_V(n,s)\leq C.
\end{equation}

In the inequality
\begin{equation*}
(-\Delta)^sv_s\leq C|v_s|^{p-2}v_s:=a(x)v_s\quad {\rm in} \quad \mathbb{R}^n,
\end{equation*}
the coefficient $a(x)$ satisfying
\[
\Big[\int_{\mathbb{R}^n}a(x)^{\frac{p}{p-2}}\,dx\Big]^{\frac{p-2}{p}}=C\Big[\int_{\mathbb{R}^n}v_s^p\,dx\Big]^{\frac{p-2}{p}}=1
\]
is also independent of $s\in(s_0,1)$.

Let $s_1=(s_0+1)/2$. Noting
\[
\frac{p}{p-2}=\frac n{2s_0}>\frac n{2s_1},
\]
by \eqref{eq:1.7}, \eqref{eq:1.8} and \eqref{ws-1} as well as Lemma \ref{lem:A.3}, we infer for $s\in (s_1,1)$ that,
\begin{equation}\label{ws-3}
\|v_s\|_\infty\leq C\|v_s\|_{2_s^*}\leq CS(n,s)^{-1}\|(-\Delta)^{\frac s2}v_s\|_2\leq C.
\end{equation}

Applying the Fourier transformation to \eqref{ws}, we obtain
\[
|\xi|^{2s}\hat{v}_s=-\widehat{Vv_s}+S_V(n,s)\widehat{v_s^{p-1}}.
\]
It follows from $(V1)$,  \eqref{ws-1}, \eqref{ws-3} and $\|v_s\|_p^p=1$ that
\begin{equation*}
\begin{split}
\int_{\mathbb{R}^n}|\xi|^{4s}|\hat{v}_s|^2\,d\xi
&=\int_{\mathbb{R}^n}\big|-\widehat{Vv_s}+S_V(n,s)\widehat{v_s^{p-1}}\big|^2\,d\xi\\
&\leq  2\Big[\int_{\mathbb{R}^n}\big|Vv_s\big|^2\,d\xi
+S_V(n,s)^2\int_{\mathbb{R}^n}\big|v_s^{p-1}\big|^2\,d\xi\Big]\\
&\leq 2\Big[V_\infty^2\int_{\mathbb{R}^n}|v_s|^2\,d\xi
+S_V(n,s)^2\|v_s\|_\infty^{p-2}\int_{\mathbb{R}^n}v_s^{p}\,d\xi\Big]\leq C,
\end{split}
\end{equation*}
 Hence,
\begin{equation*}
\|v_s\|_{H^{2s}(\mathbb{R}^n)}\leq C
\end{equation*}
and
\begin{equation}\label{ws-6}
\|v_s\|_{H^{1}(\mathbb{R}^n)}\leq 2\|v_s\|_{H^{2s}(\mathbb{R}^n)}\leq C, \text{ for any  } s>1/2.
\end{equation}
Moreover, by \eqref{ws-1} and  $\|v_s\|_p=1$, we have
\begin{equation}\label{ws-11}
\begin{split}
S_V(n,s)&\geq \frac12 \int_{\mathbb{R}^n}(|(-\Delta)^{s_1/2}v_s|^2+|v_s|^2)\,dx\geq \frac12 S_V(n,s_1)>0.
\end{split}
\end{equation}
So we derive from \eqref{ws-3}, \eqref{ws-6}, \eqref{ws-11} and  Lemma \ref{schauder} that,  for any $\{s_k\}$ with  $s_k \uparrow1$ as $k\to\infty$, there exists  a positive constant $c$ such that $\lim_{k\to \infty}S_V(n,s_k)=c$ and
\begin{equation}\label{s1-2}
\begin{split}
&v_{s_k}\rightharpoonup v\text{        in     }C_{loc}(\mathbb{R}^n);\\
&v_{s_k}\rightharpoonup v\text{        in     }H^1(\mathbb{R}^n);\\
&v_{s_k}\rightharpoonup v\text{        in     }L^p(\mathbb{R}^n);\\
&v_{s_k}\to v\text{        in     }L_{loc}^2(\mathbb{R}^n)
\end{split}
\end{equation}
up to subsequence.

Let $l=\|v\|_p^p$. Obviously, $0\leq l\leq 1$.  For simplicity, we denote $v_{s_k}$ by $v_k$.

We claim that $l>0$. Indeed, if $l=0$, then $v_k\to 0$  in   $L_{loc}^2(\mathbb{R}^n)$.
  By $(V1)$, it yields
\begin{equation}\label{p0}
\lim_{k\to\infty}\int_{\mathbb{R}^n}[V(x)-V_\infty]v_k^2\,dx=0.
\end{equation}
So we deduce from  Lemma \ref{sjx}, $\|v_k\|_p^p=1$ and \eqref{p0} that
\begin{equation*}
\begin{split}
\lim_{k\to\infty}S_V(n,s_k)&=\lim_{k\to\infty}\Big[\int_{\mathbb{R}^n}|(-\Delta)^{s_k/2}v_k|^2\,dx
+\lim_{k\to\infty}\int_{\mathbb{R}^n}V_\infty v_k^2\,dx\Big]+\lim_{k\to\infty}\int_{\mathbb{R}^n}(V-V_\infty)v_k^2\,dx\\
&\geq \lim_{k\to\infty}S_{V_\infty}(n,s_k)=S_{V_\infty}(n,1),
\end{split}
\end{equation*}
which contradicts to \eqref{sjx-1} since $S_{V_\infty}(n,1)>S_{V}(n,1)$. Thus, $l>0$.

Let $\varphi_k=v_k-v$. By \eqref{s1-2}, we have
\begin{equation*}
\begin{split}
&\varphi_{s_k}\rightharpoonup 0\text{        in     }H^1(\mathbb{R}^n);\\
&\varphi_{s_k}\rightharpoonup 0\text{        in     }L^p(\mathbb{R}^n);\\
&\varphi_{s_k}\to 0\text{        in     }L_{loc}^2(\mathbb{R}^n).
\end{split}
\end{equation*}
Then $(V1)$ implies that
\begin{equation}\label{s1-4a}
\lim_{k\to\infty}\int_{\mathbb{R}^n}[V(x)-V_\infty]|\varphi_k|^2\,dx=0.
\end{equation}
By the Brezis-Lieb Lemma, we have
\begin{equation*}
\lim_{k\to\infty}\|\varphi_k\|_p^p=\lim_{k\to\infty}\|v_k\|_p^p-\lim_{k\to\infty}\|v\|_p^p=1-l.
\end{equation*}

By $(V1)$, \eqref{ws-1} and  \eqref{s1-4a},
\begin{equation*}
\begin{split}
\lim_{k\to\infty}S_V(n,s_k)
&=\lim_{k\to\infty}\int_{\mathbb{R}^n}(2\pi|\xi|)^{s_k}|\hat{\varphi}_k|^2\,dx+\int_{\mathbb{R}^n}V_\infty\hat{\varphi}_k^2\,dx
+\lim_{k\to\infty}\int_{\mathbb{R}^n}(2\pi|\xi|)^{s_k}|\hat{v}|^2\,dx+\int_{\mathbb{R}^n}V\hat{v}^2\,dx\\
&+\lim_{k\to\infty}\int_{\mathbb{R}^n}\big(V-V_\infty\big)|\hat{\varphi}_k|^2\,dx\\
&\geq \lim_{k\to\infty}S_{V_\infty}(n,s_k)\|\varphi_{s_k}\|_p^2+ \lim_{k\to\infty}S_V(n,s_k)\|v\|_p^2\\
&\geq \lim_{k\to\infty}S_{V}(n,s_k)(\|\varphi_{s_k}\|_p^2+ \|v\|_p^2).
\end{split}
\end{equation*}
Hence, the fact $\lim_{k\to\infty}S_{V}(n,s_k)>0$ yields
\[
(1-l)^{\frac2p}+ l^{\frac2p}\leq 1.
\]
It implies  $l=1$ since $0<l\leq 1$. Thus, $v_k\to v$ in $L^p(\mathbb{R}^n)$ and
\begin{equation}\label{s1-10}
\begin{split}
\lim_{k\to\infty}S_V(n,s_k)&
=\lim_{k\to\infty}\int_{\mathbb{R}^n}(2\pi|\xi|)^{s_k}|\hat{v}_k|^2\,dx+\int_{\mathbb{R}^n}V{v}_k^2\,dx\\
&\geq \int_{\mathbb{R}^n}(2\pi|\xi|)^{2}|\hat{v}|^2\,dx+\int_{\mathbb{R}^n}V{v}^2\,dx\\
&=E_{V,1}(v)\geq S_V(n,1).
\end{split}
\end{equation}
It follows from \eqref{sjx-1} and \eqref{s1-10} that
\[
\lim_{k\to\infty}S_V(n,s_k)=S_V(n,1)=E_{V,1}(v).
\]
This means $v$ is a minimizer of $S_V(n,1)$ with $\|v\|_p^p=1$.
Due to $\{s_k\}$ is arbitrary selected,  \eqref{limit} holds.
\end{proof}

\bigskip

{\bf Proof of Theorem \ref{thm:1.3a}.} The conclusion follows from Lemma \ref{s1}.

\quad $\Box$
\bigskip

\bigskip
\section{Asymptotic behaviour of ground state solutions as $s\downarrow s_0$}

\bigskip

In this section, we will  prove that any ground state solution $u_s$ of \eqref{eq:1.1} blows up in $L^\infty(\mathbb{R}^n)$ as $s\downarrow s_0$,  and study the asymptotic behaviour of $u_s$  as $s\downarrow s_0$.
\bigskip

Let $S_V(n,s)$ and ${S}(n,s)$ be  defined in \eqref{eq:1.4} and \eqref{eq:1.7}, respectively.
\begin{lemma} \label{lem3.1} $S_V(n,s)$ is uniformly bounded for $s\in (0,1)$, and
\begin{equation*}
\lim_{s\downarrow s_0}S_V(n,s)= S(n,s_0),
\end{equation*}
\end{lemma}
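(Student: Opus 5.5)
We split the statement into an upper bound and a lower bound, each uniform in $s$, and then pass to the limit $s\downarrow s_0$.

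\emph{Uniform boundedness.} Fix a nonzero $\varphi\in C_c^\infty(\mathbb{R}^n)$. Then
\[
S_V(n,s)\leq E_{V,s}(\varphi)\leq \frac{\int_{\mathbb{R}^n}(1+|2\pi\xi|^{2})|\hat\varphi|^2\,d\xi+V_\infty\|\varphi\|_2^2}{\|\varphi\|_p^2},
\]
because $|2\pi\xi|^{2s}\leq 1+|2\pi\xi|^2$ for $s\in(0,1)$. This bound is independent of $s$.

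\emph{Upper bound for the limit.} We show $\limsup_{s\downarrow s_0}S_V(n,s)\leq S(n,s_0)$ by concentrating a bubble. Let $U_\varepsilon(x)=\varepsilon^{-(n-2s_0)/2}U(x/\varepsilon)$, where $U$ is the extremal \eqref{eq:1.9a} for $S(n,s_0)$, and multiply by a cutoff $\psi\in C_c^\infty(B_2)$ with $\psi\equiv 1$ on $B_1$. Since $p=2^*_{s_0}$, $\|\psi U_\varepsilon\|_p\to\|U\|_p$ as $\varepsilon\to0$. For fixed $\varepsilon$, dominated convergence in Fourier variables gives $\int|2\pi\xi|^{2s}|\widehat{\psi U_\varepsilon}|^2\to\int|2\pi\xi|^{2s_0}|\widehat{\psi U_\varepsilon}|^2$ as $s\downarrow s_0$; the integrand is dominated by $(1+|2\pi\xi|^{2})|\widehat{\psi U_\varepsilon}|^2$, which is integrable since $\psi U_\varepsilon$ is smooth with compact support. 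Moreover $\int V(\psi U_\varepsilon)^2\leq V_\infty\|\psi U_\varepsilon\|_2^2\to0$ as $\varepsilon\to0$: here $n\geq4>4s_0$, so $U\in L^2_{loc}$ with enough decay and $\|\psi U_\varepsilon\|_2^2=O(\varepsilon^{2s_0})$ up to logarithms. The standard cutoff estimate gives $\|(-\Delta)^{s_0/2}(\psi U_\varepsilon)\|_2^2=\|(-\Delta)^{s_0/2}U\|_2^2+O(\varepsilon^{n-2s_0})$. Letting first $s\downarrow s_0$ and then $\varepsilon\to0$ yields the upper bound.

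\emph{Lower bound.} We show $\liminf_{s\downarrow s_0}S_V(n,s)\geq S(n,s_0)$ by interpolation. For $u\in H^s$ with $s>s_0$, pointwise in $\xi$ we have $|2\pi\xi|^{2s_0}\leq \theta|2\pi\xi|^{2s}+(1-\theta)$ with $\theta=s_0/s$, by Young's inequality. Hence, with $V\geq V_0$,
\[
\|(-\Delta)^{s_0/2}u\|_2^2\leq \theta\|(-\Delta)^{s/2}u\|_2^2+(1-\theta)\|u\|_2^2\leq \max\Big\{1,\frac{1-\theta}{V_0}\Big\}\Big(\|(-\Delta)^{s/2}u\|_2^2+\int_{\mathbb{R}^n} Vu^2\,dx\Big).
\]
Since $\theta<1$ and $1-\theta\to0$, the maximum equals $1$ once $1-\theta\leq V_0$, i.e.\ for $s$ close to $s_0$. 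The left-hand side is at least $S(n,s_0)\|u\|_p^2$ because $p=2^*_{s_0}$. Thus $S_V(n,s)\geq S(n,s_0)$ for $s$ near $s_0$, and the limit follows.

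\emph{Expected obstacle.} The main difficulty is the interchange of limits in the upper bound. We must check that the fractional norm of the truncated bubble depends continuously on $s$ at fixed $\varepsilon$, and that the truncation error is uniform. Both hold because $\psi U_\varepsilon$ is a Schwartz-type compactly supported function, so no uniformity in $s$ is needed there. If \eqref{eq:1.7} is interpreted with the Fourier normalization $(2\pi|\xi|)^{2s}$, all constants are consistent with this argument.
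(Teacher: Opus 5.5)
Your proof is correct, and it departs from the paper's mainly in the lower bound. The paper obtains $\liminf_{s\downarrow s_0}S_V(n,s)\ge S(n,s_0)$ in one line, by comparing $S_V(n,s)$ with $S(n,s)$ and using the explicit formula for $S(n,s)$. That comparison is not justified as written. The two quotients have different denominators ($\|u\|_p$ versus $\|u\|_{2^*_s}$, with $p<2^*_s$ for $s>s_0$). Moreover, $S_V(n,s)\ge S(n,s)$ is false in general at fixed $s$: if $V\le c$, the scaling $u\mapsto u(c^{1/(2s)}x)$ shows that $S_V(n,s)$ is at most $c^{1-s_0/s}$ times the value for $V\equiv 1$, which drops below $S(n,s)$ for small $c$. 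Your pointwise Young inequality $|2\pi\xi|^{2s_0}\le\theta|2\pi\xi|^{2s}+(1-\theta)$, combined with $V\ge V_0$ and the sharp Sobolev inequality at the level $s_0$ where $p=2^*_{s_0}$, supplies exactly the missing justification. It even gives $S_V(n,s)\ge S(n,s_0)$ for all $s$ near $s_0$.

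For the upper bound the two routes are close. The paper tests with the untruncated $s$-dependent extremal $\psi_{\varepsilon,s}$, which lies in $L^2$ since $n\ge 4>4s$, and needs continuity in $s$ of the explicit norms of $\psi_s$. Its display also writes $\|\psi_{\varepsilon,s}\|_{2^*_s}$ where $\|\psi_{\varepsilon,s}\|_p$ belongs; this hides a factor $\varepsilon^{-2(s-s_0)}$ that is harmless only because $\varepsilon$ is fixed as $s\downarrow s_0$. Your $s$-independent test function makes the denominator and the potential term independent of $s$, so a single dominated convergence in $\xi$ suffices.

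Two simplifications are available. The cutoff $\psi$ is unnecessary, since $n>4s_0$ already gives $U\in H^1(\mathbb{R}^n)$. And from the cutoff estimate you only need $\|(-\Delta)^{s_0/2}(\psi U_\varepsilon)\|_2\to\|(-\Delta)^{s_0/2}U\|_2$, not the rate $O(\varepsilon^{n-2s_0})$.
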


\begin{proof}
For any $\varphi\in C_c^\infty(\mathbb{R}^n)$ such that $\int_{\mathbb{R}^n}|\varphi|^p\,dx=1$,  by   $(V1)$ we find that
\begin{equation*}
\begin{split}
S_V(n,s)
\leq\int_{\mathbb{R}^n}|(-\Delta)^{s/2}\varphi|^2\,dx+\int_{\mathbb{R}^n}V|\varphi|^2\,dx\leq[(2\pi)^2+V_\infty]\|\varphi\|_{H^1(\mathbb{R}^n)}.
\end{split}
\end{equation*}

First,
by \eqref{eq:1.8}, we have
\begin{equation*}
\liminf_{s\downarrow s_0}S_V(n,s)\geq \liminf_{s\downarrow s_0}S(n,s)=S(n,s_0).
\end{equation*}

Next, we show that

\begin{equation*}
\limsup_{s\downarrow s_0}S_V(n,s)\leq S(n,s_0).
\end{equation*}

By Theorem 1.1 in \cite{CT}, there exists a function $\psi_s=(1+|x|^2)^{\frac{2s-n}{2}}$ such that $S(n,s)=E_{0,s}(\psi_s)$.

Let $\varepsilon>0$ and
\begin{equation*}
\psi_{\varepsilon,s}=\varepsilon^{\frac{2s-n}{2}}\psi_s\bigg(\frac x\varepsilon\bigg).
\end{equation*}

There holds that,
\begin{equation*}
\begin{split}
\limsup_{s\downarrow s_0}\inf_{u\in H^s(\mathbb{R}^n)\setminus\{0\}}\,E_{V,s}(u)
&\leq \limsup_{s\downarrow s_0} \,E_{V,s}(\psi_{\varepsilon,s})\\
&= \limsup_{s\downarrow s_0}\,S(n,s)+\frac{\|\sqrt{V}\psi_{\varepsilon,s}\|_{2}^2}{\|\psi_{\varepsilon,s}\|_{2_s^*}^2}\\
&\leq S(n,s_0)+\frac{\varepsilon^{2s_0}V_\infty}{\|\psi_{s_0}\|_{2_{s_0}^*}^2}\int_{\mathbb{R}^n}\Big(1+|x|^2\Big)^{2s_0-n}\,dx,
\end{split}
\end{equation*}
which implies
\[
\limsup_{s\downarrow s_0}S_V(n,s)\leq S(n,s_0).
\]
The conclusion follows.
\end{proof}

\bigskip

\begin{remark}\label{r:3.1}

By  \eqref{keyequality} and Lemma \ref{lem3.1}, both $\|(-\Delta)^{s/2}u_s\|_2^2+\|\sqrt{V}u_s\|_2^2$ and $\|u_s\|_p^p$ are uniformly
bounded for $s\in(s_0,1)$, so is in $H^s(\mathbb{R}^n)$ for $s\in(s_0,1)$ by $(V1)$.  The embedding $H^s(\mathbb{R}^n) \hookrightarrow H^{s_0}(\mathbb{R}^n)$ tells us that  $\{u_s\}$ is uniformly bounded in  $H^{s_0}(\mathbb{R}^n)$.  Hence, we infer from \eqref{eq:1.7} and \eqref{eq:1.8} that  $\|u_s\|_{2_s^*}$ is uniformly bounded with respect to  $s\in(s_0,1)$.  Moreover,
\begin{equation}\label{r:3.1-2}
\lim_{s\downarrow s_0}\|(-\Delta)^{s/2}u_s\|_2^2+\|\sqrt{V}u_s\|_2^2=\lim_{s\downarrow s_0}\|u_s\|_p^p= S(n,s_0)^{\frac p{p-2}}.
\end{equation}
\end{remark}
\bigskip

\begin{lemma}\label{lem:3.2}
Let $u_s$ be a positive ground state solution of \eqref{eq:1.1}. Then
\begin{equation}\label{lem:3.2-1}
\lim_{s\downarrow s_0}\|u_s\|_{\infty}=+\infty.
\end{equation}
\end{lemma}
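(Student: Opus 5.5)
We argue by contradiction. Suppose there is a sequence $s_k\downarrow s_0$ with $\|u_{s_k}\|_\infty\le M$ for some $M>0$ independent of $k$. The idea is that a uniform $L^\infty$ bound, combined with the uniform $L^2$ bound, forces the $L^p$ norm to be controlled by a strictly subcritical interpolation. That produces compactness, which contradicts the nonexistence of minimizers at $s=s_0$ (Lemma \ref{lem:A1}).

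First we collect the uniform bounds. By Remark \ref{r:3.1}, $\|u_{s_k}\|_{H^{s_k}}$ is uniformly bounded; hence so are $\|u_{s_k}\|_{H^{s_0}}$ and $\|u_{s_k}\|_2$ (using $V\ge V_0$). By \eqref{r:3.1-2}, $\|u_{s_k}\|_p^p\to S(n,s_0)^{p/(p-2)}>0$.

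Next we rule out vanishing and pass to a limit. Applying Lemma \ref{schauder} to $(-\Delta)^{s_k}u_{s_k}=u_{s_k}^{p-1}-Vu_{s_k}$, whose right-hand side is bounded by $M^{p-1}+V_\infty M$, we get equicontinuity with exponent depending only on $s_0$. If $\sup_x\int_{B_1(x)}u_{s_k}^2\to0$, then equicontinuity and $\|u_{s_k}\|_\infty\le M$ give $\|u_{s_k}\|_\infty\to0$. In that case
\[
\|u_{s_k}\|_p^p\le \|u_{s_k}\|_\infty^{p-2}\|u_{s_k}\|_2^2\to 0,
\]
which is a contradiction. So there are points $y_k$ with $u_{s_k}(\cdot+y_k)\to w\neq0$ in $C_{loc}$ and weakly in $H^{s_0}$. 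Passing to the limit in the weak formulation (for $\phi\in C_c^\infty$, $(-\Delta)^{s_k}\phi\to(-\Delta)^{s_0}\phi$ suitably), $w$ solves $(-\Delta)^{s_0}w+V_*w=w^{p-1}$. Here $V_*$ is either the translate limit of $V$ or the constant $V_\infty$ if $|y_k|\to\infty$. Moreover $w\in H^{s_0}\cap L^\infty$, $w\ge0$, $w\not\equiv0$.

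Then we derive the contradiction. Since $p=2^*_{s_0}$, the Pohozaev identity for the $s_0$-fractional problem gives
\[
\tfrac{n-2s_0}{2}\|(-\Delta)^{s_0/2}w\|_2^2+\tfrac n2\int V_*w^2+\tfrac12\int(x\cdot\nabla V_*)w^2=\tfrac np\int w^p .
\]
Combining this with the Nehari identity, the terms cancel to leave
\[
s_0\int V_*w^2+\tfrac12\int (x\cdot\nabla V_*)w^2=0 .
\]
In the case $V_*=V_\infty$ this already gives $w=0$, a contradiction. In general we prefer an energy route that avoids (V2). By weak lower semicontinuity and Lemma \ref{lem3.1},
\[
\|(-\Delta)^{s_0/2}w\|_2^2+\int V_*w^2\le\liminf_k S_V(n,s_k)^{p/(p-2)}=S(n,s_0)^{p/(p-2)} .
\]
Together with $\|(-\Delta)^{s_0/2}w\|_2^2+\int V_* w^2=\int w^p$, the definition of $S(n,s_0)$ then gives
\[
S(n,s_0)\|w\|_p^2\le \|(-\Delta)^{s_0/2}w\|^2_2<\|(-\Delta)^{s_0/2}w\|_2^2+\int V_*w^2=\|w\|_p^p ,
\]
so $\|w\|_p^{p-2}>S(n,s_0)$, i.e. $\|w\|_p^p>S(n,s_0)^{p/(p-2)}$. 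But lower semicontinuity gives $\|w\|_p^p\le S(n,s_0)^{p/(p-2)}$, a contradiction.

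The main obstacle is the passage to the limit $s_k\to s_0$ in the equation and in the lower semicontinuity of the quadratic form. This needs $\int|\xi|^{2s_k}|\hat u_k|^2\ge\int|\xi|^{2s_0}|\hat u_k|^2-o(1)$, which follows from $|\xi|^{2s_0}\le|\xi|^{2s_k}+1$ together with the uniform $L^2$ bound giving an $O(s_k-s_0)$ error (after splitting $|\xi|\le1$). This holds for every sequence, which yields \eqref{lem:3.2-1}.
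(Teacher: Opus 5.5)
Your proposal is correct and follows essentially the paper's route. Uniform Schauder bounds give $C_{loc}$ compactness of suitable translates, and the limit $w\not\equiv 0$ solves $(-\Delta)^{s_0}w+V_*w=w^{p-1}$ with $V_*\ge V_0>0$. Sobolev plus the Nehari identity then force $\|w\|_p^{p-2}>S(n,s_0)$, contradicting $\|w\|_p^p\le \lim_k\|u_{s_k}\|_p^p=S(n,s_0)^{p/(p-2)}$.

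The differences are cosmetic. The paper gets non-vanishing by translating to the maximum point and using $\|u_s\|_\infty\ge V_0^{1/(p-2)}$ (from testing the equation with $u_s$), instead of your $L^2$/$L^\infty$ interpolation. Your Pohozaev digression and the lower semicontinuity of the $s_k$-quadratic form are not needed, since Fatou applied to $\|u_{s_k}(\cdot+y_k)\|_p^p$ already suffices.
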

\begin{proof}

Since $u_s$ is a solution of  \eqref{eq:1.1}, by  $(V1)$, we have
\[
\int_{\mathbb{R}^n}u_s^2(u_s^{p-2}-V_0)\,dx\geq\int_{\mathbb{R}^n}|(-\Delta)^{\frac s2}u_s|^2\,dx\geq0,
\]
which implies that
\begin{equation}\label{lem:3.2-2}
\|u_s\|_{L^\infty(\mathbb{R}^n)}\geq V_0^{\frac1{p-2}}>0.
\end{equation}

In order to  prove \eqref{lem:3.2-1}, we suppose, on the contrary, that there exists a sequence $\{s_k\}\downarrow s_0$  as $k\to\infty$
such that $\{u_{s_k}\}$ is uniformly bounded in $L^\infty(\mathbb{R}^n)$.

Let $x_k$ be the maximum point of $u_{s_k}$ and $u_{s_k}^{x_k}(x)=u_{s_k}(x+x_k)$. Since $u_{s_k}$ solves \eqref{eq:1.1} with $s=s_k$,  by \eqref{lem:3.2-2},
$$
u_{s_k}^{x_k}(0)\geq V_0^{\frac1{p-2}}>0,
$$
and for $\varphi\in C^\infty_0(\mathbb{R}^n)$, there holds
\begin{equation}\label{lem:3.2-3}
\int_{\mathbb{R}^n}(-\Delta)^{\frac{s_0}2}u_{s_k}^{x_k}(x)(-\Delta)^{s_k-\frac{s_0}2}\varphi\,dx+\int_{\mathbb{R}^n}V(x+x_k)u_{s_k}^{x_k}(x)\varphi\,dx
=\int_{\mathbb{R}^n}u_{s_k}^{x_k}(x)^{p-1}\varphi\,dx.
\end{equation}

We know from  Remark \ref{r:3.1}  that  $\{u_{s_k}^{x_k}(x)\}$ is bounded in $H^{s_0}(\mathbb{R}^n)$.

Using Lemma \ref{schauder} with $u=u_{s_k}^{x_k}(x)$ and $f=u_{s_k}^{x_k}(x)^{p-1}-V(x+x_k)u_{s_k}^{x_k}(x)$ , we see that  there exists $\alpha\in (0,1]$ such that
 \begin{equation*}
 \|u_{s_k}^{x_k}(x)\|_{C^{0,\alpha}(\mathbb{R}^n)} \leq C.
\end{equation*}
So we may assume that
\begin{equation*}
\begin{split}
&u_{s_k}^{x_k}(x)\rightharpoonup v_0 \text{ weakly in }H^{s_0}(\mathbb{R}^n);\\
&u_{s_k}^{x_k}(x)\to v_0 \text{ in } C_{loc}(\mathbb{R}^n).
\end{split}
\end{equation*}
As a result,  $v_0(0)\geq V_0^{\frac1{p-2}}>0$.

Now we distinguish two cases to derive a contradiction.

\bigskip

Case $1$. $\{x_k\}$ is bounded. We may assume that $x_k\to x_0$. Since
$$
(-\Delta)^{s_k-\frac{s_0}2}\varphi\to (-\Delta)^{\frac{s_0}2}\varphi\quad {\rm in}\quad L^2(\mathbb{R}^n)
$$
as $s_k \downarrow s_0$, taking limit in \eqref{lem:3.2-3},  we find that  $v_0$ solves
\begin{equation}\label{lem:3.2-4}
(-\Delta)^{s_0}v_0+V(x+x_0)v_0=v_0^{p-1} \text{  in  } \mathbb{R}^n.
\end{equation}
By \eqref{r:3.1-2}, \eqref{lem:3.2-4} and $x_k\to x_0$, we derive
\[
S(n,s_0)\leq \frac{\|(-\Delta)^{s_0/2}v_0\|_2^2}{\|v_0\|_p^2}<\|v_0\|_p^{p-2}\leq \lim_{k\to \infty}\|u_{s_k}^{x_k}\|_p^{p-2}=\lim_{s \downarrow s_0}\|u_s\|_p^{p-2}=S(n,s_0)
\]
a contadiction.

\bigskip

Case $2$. $\{x_k\}$ is unbounded. We may assume that $x_k\to \infty$. Then  $v_0$ solves
\begin{equation*}
(-\Delta)^{s_0}v_0+V_\infty v_0=v_0^{p-1} \text{  in  } \mathbb{R}^n.
\end{equation*}
Then we may obtain a contradiction as the case $1$. The proof is complete.
\end{proof}

\bigskip

Let  $v_s$ be defined in \eqref{scaling1}.

 Now, we  study the  asymptotic behaviour of $v_s$ as $s\downarrow s_0$.  A difficulty to be encountered is the lack the convergence of $\{v_s\}$ in the space of integrable functions as $s\downarrow s_0$. We then turn to use the pointwise convergence of $\{v_s\}$. Our result can be described as follows.

\bigskip

\begin{lemma} \label{lem2.2} There holds

\begin{equation}\label{lem2.2:a}
v_s\to Q_{s_0}\quad {\rm in}\quad L^p(\mathbb{R}^n)\quad {\rm and}\quad C_{loc}(\mathbb{R}^n)
\end{equation}
as $s\downarrow s_0$,  where $Q_{s}$ is given by \eqref{eq:1.9} and
 \begin{equation}\label{thm2.2-aa}
\lim_{s\downarrow s_0}\mu_{s}^{s-s_0}=1.
\end{equation}
\end{lemma}

\begin{proof}
 The fact that
\begin{equation*}
\|v_s\|_\infty=v_s(0)=1
\end{equation*}
and
\begin{equation}\label{thm1.2-1}
(-\Delta)^sv_s+\mu_s^{2s}V(\mu_s x+x_s)v_s=v_s^{p-1}\quad {\rm in} \quad \mathbb{R}^n
\end{equation}
allows us to show by Lemma \ref{schauder} that, there exists $\alpha\in(0,1)$ such that
\begin{equation*}
\|v_s\|_{C^{0,\alpha}(\mathbb{R}^n)}\leq C(\|v_s\|_\infty+\|v_s\|_\infty^{p-1})\leq C,
\end{equation*}
where $C>0$ depends only on $n$ and $s_0$. Hence, for any sequence $\{s_k\}$ with  $s_k\downarrow s_0$ as $k\to\infty$, correspondingly there exists a sequence solutions $\{v_{s_k}\}$ of \eqref{thm1.2-1} such that
\begin{equation*}
v_{s_k}\to v \quad {\rm  in } \quad C_{loc}(\mathbb{R}^n)
\end{equation*}
up to subsequence. Thus, $v(0)=1$, that is, $v$ is nontrivial.

Furthermore, for $\varphi\in C_c^\infty(\mathbb{R}^n)$, the Lebeasgue dominated theorem implies that as $k\to\infty$,
\begin{equation*}
\begin{split}
 \mu_{s_k}^{2{s_k}}\int_{\mathbb{R}^n}V(\mu_{s_k} x+x_{s_k})v_{s_k}\varphi\,dx\to0,\quad \int_{\mathbb{R}^n}v_{s_k}^{p-1}\varphi\,dx\to\int_{\mathbb{R}^n}v^{p-1}\varphi\,dx.
\end{split}
\end{equation*}

In order to prove the lemma, we need to pass the limit in \eqref{thm1.2-1}. This will  be done if we may show as $k\to\infty$,
\begin{equation*}
 \int_{\mathbb{R}^n}v_{s_k}(-\Delta)^{s_k}\varphi\,dx\to\int_{\mathbb{R}^n}v(-\Delta)^{s_0}\varphi\,dx.
\end{equation*}

To this end,  we may assume, without of generalization, that $\varphi\in C_c^\infty(\mathbb{R}^n)$ satisfying $\varphi=0$ if $|x|\geq R>1$ and $0<\varphi\leq 1$. By the definition of the fractional Laplacian  \eqref{eq:1.2}, we have for  $|x|\geq 2R>2$
and  $s\in(s_0,1)$ that,
\begin{equation}\label{thm1.2-a1}
\begin{split}
|(-\Delta)^s\varphi(x)|
=D(n,s)\int_{\{|x|\leq R\}}\frac{-\varphi(y)}{|x-y|^{n+2s}}\,dy
\leq C|x|^{-(n+2s_0)}.
\end{split}
\end{equation}

On the other hand, by Lemma 3.2 in \cite{NPV}, for any $x\in\mathbb{R}^n$,
\begin{equation*}
\begin{split}
(-\Delta)^s\varphi(x)=-\frac12D(n,s)\int_{\mathbb{R}^n}\frac{\varphi(x+y)+\varphi(x-y)-2\varphi(x)}{|y|^{n+2s}}\,dy.
\end{split}
\end{equation*}
We find for  $\delta\in (0,1-s_0)$ and $s\in (s_0,s_0+\delta)$ that a uniformly bound of $(-\Delta)^s\varphi(x)$, that is,
\begin{equation}\label{thm1.2-a3}
\begin{split}
|(-\Delta)^s\varphi(x)|
&\leq C\Bigg[\int_{\{|y|\leq1\}}\frac{\|D^2\varphi\|_{\infty}}{|y|^{n+2s_0+2\delta-2}}\,dy+\int_{\{|y|\geq1\}}\frac{1}{|y|^{n+2s_0}}\,dy\Bigg].
\end{split}
\end{equation}
By \eqref{thm1.2-a1}, \eqref{thm1.2-a3}, $0<\varphi\leq 1$ and  $0<v_s,v\leq 1$,
\begin{equation*}
|(-\Delta)^s\varphi(x)|
\leq \frac {C}{(1+ |x|)^{n+2s_0}}.
\end{equation*}
Hence,  we demonstrate by the Lebeasgue dominated theorem, as $k\to\infty$, that
\begin{equation*}
\int_{\mathbb{R}^n}v_{s_k}(-\Delta)^{s_k}\varphi\,dx\to\int_{\mathbb{R}^n}v(-\Delta)^{s_0}\varphi\,dx.
\end{equation*}
As a result,
\begin{equation*}
\int_{\mathbb{R}^n}v(-\Delta)^{s_0}\varphi\,dx=\int_{\mathbb{R}^n}v^{p-1}\varphi\,dx.
\end{equation*}
Noting that  $p=2_{s_0}^*$, we know from \cite{CLO} that $v=Q_{s_0}$, where $Q_{s_0}$ is given by \eqref{eq:1.9}.  Therefore,  Theorem 1.1 in \cite{CT} implies that
\begin{equation*}
S(n,s_0)=\frac{\|(-\Delta)^{s_0/2}v\|_2^2}{\|v\|_p^2}=\|v\|_p^{p-2}.
\end{equation*}

Consequently,
\begin{equation*}
\begin{split}
\liminf_{k\to\infty}\|v_{s_k}\|_p^p\geq \|v\|_p^p=S(n,s_0)^{\frac p{p-2}}.
\end{split}
\end{equation*}
By \eqref{r:3.1-2} and $\mu_s\leq1$, we have
\begin{equation*}
\begin{split}
\limsup_{k\to\infty}\|v_{s_k}\|_p^p
=\limsup_{k\to\infty}\mu_{s_k}^{\frac{2p}{p-2}(s_k-s_0)}\|u_{s_k}\|_p^p\leq \lim_{k\to\infty}\|u_{s_k}\|_p^p=S(n,s_0)^{\frac p{p-2}}.
\end{split}
\end{equation*}
Therefore,
\begin{equation*}
\lim_{k\to\infty}\|v_{s_k}\|_p^p=S(n,s_0)^{\frac p{p-2}}
\end{equation*}
and
\begin{equation*}
\lim_{k\to\infty}\mu_{s_k}^{s_k-s_0}=1.
\end{equation*}
This yields
\begin{equation*}
v_{s_k}\to v=Q_{s_0} \text{ strongly in } L^p(\mathbb{R}^n).
\end{equation*}
We conclude that \eqref{lem2.2:a} and \eqref{thm2.2-aa} hold since $\{s_k\}$ is arbitrary.
\end{proof}

\section{The concentration of ground state solutions}

In this section, we will determine the blowup rate and the location of the limit of the maximum point $x_s$  of  the ground state solution $u_s$ as $s\downarrow s_0$.

Let $u_s^{x_s}(x):=u_s(x+x_s)$, where $x_s$ is a maximum point of $u_s$.

\begin{lemma}\label{lem:4.3}
For fixed $R>0$, it holds that
\[
\lim_{s\downarrow s_0}\int_{\{|x|\geq R\}}(u_s^{x_s})^p(x)\,dx=0.
\]
\end{lemma}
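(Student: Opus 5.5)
The plan is to read off the statement from the rescaled convergence in Lemma \ref{lem2.2}, combined with the fact that the scaling parameter $\mu_s$ tends to $0$. By \eqref{scaling1}--\eqref{scaling2} we have $u_s^{x_s}(x)=\mu_s^{-\alpha_s}v_s(x/\mu_s)$. A change of variables $x=\mu_s z$ then gives
\[
\int_{\{|x|\geq R\}}(u_s^{x_s})^p\,dx=\mu_s^{\,n-p\alpha_s}\int_{\{|z|\geq R/\mu_s\}}v_s^p\,dz .
\]
Since $\alpha_s=\frac{2s}{p-2}$ and $n=\frac{2ps_0}{p-2}$, the exponent is $n-p\alpha_s=-\frac{2p}{p-2}(s-s_0)$. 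Thus the prefactor equals $\big(\mu_s^{s-s_0}\big)^{-\frac{2p}{p-2}}$, and by \eqref{thm2.2-aa} it tends to $1$. It therefore suffices to show that $\int_{\{|z|\geq R/\mu_s\}}v_s^p\,dz\to0$.

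Next I use that $\mu_s\to0$. By Lemma \ref{lem:3.2}, $\|u_s\|_\infty=\mu_s^{-\alpha_s}\to\infty$, and $\alpha_s\in\big(\frac{2s_0}{p-2},\frac{2}{p-2}\big)$ is bounded above and below. Hence $\mu_s\to0$, and in particular $R/\mu_s\to\infty$.

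For the tail integral, split $v_s=Q_{s_0}+(v_s-Q_{s_0})$. By Minkowski's inequality,
\[
\Big(\int_{\{|z|\geq R/\mu_s\}}v_s^p\Big)^{1/p}\leq \|v_s-Q_{s_0}\|_p+\Big(\int_{\{|z|\geq R/\mu_s\}}Q_{s_0}^p\Big)^{1/p}.
\]
The first term tends to $0$ by the $L^p$ convergence in Lemma \ref{lem2.2}. The second term tends to $0$ because $Q_{s_0}\in L^p(\mathbb{R}^n)$ and $R/\mu_s\to\infty$. Explicitly, $Q_{s_0}^p\sim|z|^{-2n}$ at infinity, so the tail is integrable. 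This completes the argument.

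There is no serious obstacle here. The only point that needs care is the exponent bookkeeping in the change of variables, namely checking that $\mu_s^{\,n-p\alpha_s}$ is exactly a power of $\mu_s^{s-s_0}$, so that \eqref{thm2.2-aa} controls it even though $\mu_s\to0$. The convergence statements in Lemma \ref{lem2.2} were proved along arbitrary sequences $s_k\downarrow s_0$, so the limit holds for the full family $s\downarrow s_0$.
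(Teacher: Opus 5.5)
Your proposal is correct and follows essentially the same route as the paper. You rescale to $v_s$, observe that the prefactor $\mu_s^{-\frac{2p}{p-2}(s-s_0)}$ tends to $1$ by \eqref{thm2.2-aa}, and make the tail $\int_{\{|z|\geq R/\mu_s\}}v_s^p\,dz$ vanish using $\mu_s\to0$ (Lemma \ref{lem:3.2}) and $v_s\to Q_{s_0}$ in $L^p$ (Lemma \ref{lem2.2}). Your Minkowski splitting is simply a cleaner form of the paper's $2^p$ estimate.
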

\begin{proof}
Apparently,
\begin{equation}\label{lem:4.3-1}
\begin{split}
\int_{\{|x|\geq R\}}(u_s^{x_s})^p(x)\,dx=\mu_s^{-\frac{2p(s-s_0)}{p-2}}\int_{\{|y|\geq \mu_s^{-1}R\}}v_s^p(y)\,dy
\end{split}
\end{equation}
and
\begin{equation*}
\int_{\{|y|\geq \mu_s^{-1}R\}}v_s^p(y)\,dy\leq 2^p\Big(\int_{\{|y|\geq \mu_s^{-1}R\}}|v_s(y)-Q_{s_0}|^p\,dy+\int_{\{|y|\geq \mu_s^{-1}R\}}|Q_{s_0}|^p\,dy\Big).
\end{equation*}
In accordance to Lemmas \ref{lem:3.2} and  \ref{lem2.2},  $\mu_s\to0$ and $v_s\to Q_{s_0}$ in $L^p(\mathbb{R}^n)$ as $s\downarrow s_0$, we derive that
\[
\int_{\{|y|\geq \mu_s^{-1}R\}}v_s^p(y)\,dy \to 0
\]
as $s\downarrow s_0$.
Taking into account \eqref{thm2.2-aa}, we deduce from  \eqref{lem:4.3-1}  the required result.
\end{proof}

 \begin{proposition}\label{lem:4.1}
For any fixed $R>0$, it holds that
\[
\lim_{s\downarrow s_0}u_s^{x_s}(x)=0 \quad {\rm  uniformly \, \, for  }\quad x\in B_R^c(0).
\]

\end{proposition}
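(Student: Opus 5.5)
The plan is to combine the uniform Harnack inequality of Lemma \ref{lem:2.4} with the $L^p$ smallness of $u_s^{x_s}$ away from the origin from Lemma \ref{lem:4.3}. Fix $R>0$ and take any point $z$ with $|z|\geq R$. We work on the ball $B_r(z)$ with $r=\min\{R/2,1/2\}$, so that $B_r(z)\subset B_{R/2}^c(0)$ and $r<1$. The function $u_s^{x_s}$ is positive and satisfies
\[
(-\Delta)^s u_s^{x_s}=a_s(x)u_s^{x_s}\quad\text{in }\mathbb{R}^n,\qquad a_s(x):=(u_s^{x_s})^{p-2}(x)-V(x+x_s).
\]
Thus it solves \eqref{lem A.2-0} on $B_r(z)$ with this $a_s$ and with $b=0$.

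\textbf{Step 1: verify the hypotheses of Lemma \ref{lem:2.4} uniformly in $s$.} Take $q=\frac{p}{p-2}=\frac{n}{2s_0}$. This $q$ is not strictly larger than $\frac{n}{2s_0}$, so we fix $s_1\in(s_0,s)$ close to $s_0$ with $q>\frac{n}{2s_1}$ and restrict to $s\in(s_1,1)$. That requires $s_1>s_0$, which clashes with letting $s\downarrow s_0$.

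To avoid this, we first use a slightly larger exponent. Since $\|u_s^{x_s}\|_{2_s^*}$ is uniformly bounded (Remark \ref{r:3.1}) and $2_s^*>p$, interpolation gives an $L^{q}$ bound on $(u_s^{x_s})^{p-2}$ over $B_r(z)$. Concretely, choose $t\in(p,2_{s}^*)$; for $s$ near $s_0$ one has $2_s^*\geq 2_{s^\ast}^*$ for a fixed $s^\ast$ slightly above $s_0$, so $t$ can be taken independent of $s$. Then
\[
\|(u_s^{x_s})^{p-2}\|_{L^{t/(p-2)}(B_r(z))}\leq \|u_s^{x_s}\|_{L^t}^{p-2}\leq \|u_s^{x_s}\|_p^{\theta(p-2)}\|u_s^{x_s}\|_{2_s^*}^{(1-\theta)(p-2)}\leq C,
\]
and $q:=\frac{t}{p-2}>\frac{n}{2s_0}$. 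Because $q>\frac{n}{2s_0}$ strictly, Lemma \ref{lem:2.4} applies with $s_1=s_0$, or with $s_1$ slightly below $s_0$ if needed. Its constant must stay uniform as the lower endpoint approaches $s_0$; this holds because only $q>\frac{n}{2s_1}$ enters the exponents. The $V$-part of $a_s$ is bounded by $V_\infty$, so $\|a_s\|_{L^q(B_r(z))}\leq C_a$ with $C_a$ independent of $s$ and $z$.

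\textbf{Step 2: Harnack plus averaging.} Lemma \ref{lem:2.4} gives
\[
\sup_{B_{r/2}(z)}u_s^{x_s}\leq C\inf_{B_{r/2}(z)}u_s^{x_s}\leq C\,|B_{r/2}|^{-1/p}\Big(\int_{B_{r/2}(z)}(u_s^{x_s})^p\,dx\Big)^{1/p}.
\]
Since $B_{r/2}(z)\subset\{|x|\geq R/2\}$, we obtain
\[
\sup_{|x|\geq R}u_s^{x_s}(x)\leq C(R)\Big(\int_{\{|x|\geq R/2\}}(u_s^{x_s})^p\,dx\Big)^{1/p},
\]
with $C(R)$ independent of $s$. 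Lemma \ref{lem:4.3}, applied with $R/2$ in place of $R$, then shows the right-hand side tends to $0$ as $s\downarrow s_0$, which proves the uniform convergence.

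\textbf{Main obstacle.} The delicate point is the uniformity of the Harnack constant in Step 1. The natural exponent $\frac{p}{p-2}$ equals exactly $\frac{n}{2s_0}$, which is borderline for Lemma \ref{lem:2.4}. So the argument depends on getting an integrability gain for $(u_s^{x_s})^{p-2}$ from the uniform $L^{2_s^*}$ bound, and on checking that the constants in Lemmas \ref{harnack}--\ref{harnack2} stay bounded for $s$ in a fixed interval containing a neighborhood of $s_0$ from above. Lemma \ref{harnack} requires $s_2<1$, which is harmless here since $s$ is near $s_0$.

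If the interpolation bound in Step 1 is too weak, the fallback is a different route to $L^\infty$ control. Use Lemma \ref{lem A.1} on $B_{2r}(z)$: the smallness $\int_{B_{2r}(z)}(u_s^{x_s})^{(p-2)\frac{n}{2s}}<\delta_0$ follows from Lemma \ref{lem:4.3} together with $\frac{(p-2)n}{2s}\leq p$. That lemma yields a uniform higher $L^{(2_s^*)^2/2}$ bound, which then supplies the needed $q>\frac{n}{2s_0}$ for the Harnack step.
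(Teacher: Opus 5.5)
Your overall scheme (Harnack on balls centred at $|z|\ge R$, then $\inf\le$ average, then Lemma \ref{lem:4.3}) is the paper's. However, the Step~1 you lead with fails, so the ``fallback'' is not optional. The map $s\mapsto 2_s^*=\frac{2n}{n-2s}$ is increasing with $2^*_{s_0}=p$. Hence for $s_0<s<s^\ast$ you have $2_s^*<2^*_{s^\ast}$, not $\geq$. The window $(p,2_s^*)$ shrinks to $\{p\}$, and no $t>p$ can be fixed.

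This obstruction is intrinsic. For fixed $t>p$, $\|u_s\|_t^t=\mu_s^{n-\alpha_s t}\|v_s\|_t^t\to\infty$, because $n-\alpha_{s_0}t=n-\frac{n-2s_0}{2}t<0$ and $\mu_s\to0$. So no uniform global integrability beyond $L^p$ is available. If instead you let $t=t(s)\in(p,2_s^*)$, then $q=\frac{t}{p-2}$ satisfies $0<2sq-n<\frac{n^2(s-s_0)}{s_0(n-2s)}\to 0$. Your claim that only $q>\frac{n}{2s_1}$ matters is not accurate: the proof of Lemma \ref{harnack} raises constants to the power $\frac{2sq}{2sq-n}$, so a fixed gap above the threshold is needed. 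Taking $s_1$ below $s_0$ only raises the threshold $\frac{n}{2s_1}$ and makes things worse.

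Your fallback is correct and is exactly the paper's argument; its proof invokes Lemmas \ref{lem:2.4}, \ref{lem A.1} and \ref{lem:4.3}. It only needs a smaller radius. Take $r<\min\{R/4,1/2\}$: with your $r=R/2$, the ball $B_{2r}(z)$ reaches the origin when $|z|=R$, which is where $u_s^{x_s}$ concentrates.

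With this $r$, note that $\frac{(p-2)n}{2s}=\frac{ps_0}{s}<p$. H\"older then gives $\int_{B_{2r}(z)}(u_s^{x_s})^{ps_0/s}\,dx\le|B_{2r}|^{1-s_0/s}\big(\int_{\{|x|\ge R/2\}}(u_s^{x_s})^p\,dx\big)^{s_0/s}$. The right-hand side tends to $0$ uniformly in $|z|\ge R$. So Lemma \ref{lem A.1}, applied after translation to $(-\Delta)^su_s^{x_s}\le(u_s^{x_s})^{p-2}u_s^{x_s}$, yields $\|u_s^{x_s}\|_{L^{(2_s^*)^2/2}(B_r(z))}\le C$.

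Since $\frac{(2_s^*)^2}{2(p-2)}>\frac{p^2}{2(p-2)}=\frac p2\cdot\frac{n}{2s_0}$, you may fix $q\in\big(\frac{n}{2s_0},\frac p2\cdot\frac{n}{2s_0}\big)$ independent of $s$. This gives $\|a_s\|_{L^q(B_r(z))}\le C_a$ with a fixed gap above $\frac{n}{2s_0}$. Lemma \ref{lem:2.4} with $s_1=s_0$ then gives an $s$-independent Harnack constant. Since $B_{r/2}(z)\subset\{|x|\ge R/2\}$, your Step~2 goes through unchanged.
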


\begin{proof}
By \eqref{eq:1.1}, $u_s^{x_s}$ satisfies
\begin{equation*}
(-\Delta)^su_s^{x_s}=\Big((u_s^{x_s})^{p-2}-V(x+x_s)\Big)u_s^{x_s}\leq (u_s^{x_s})^{p-2} u_s^{x_s}\quad{\rm in }\quad\mathbb{R}^n.
\end{equation*}
Using Lemma  \ref{lem:2.4}, \ref{lem A.1} and \ref{lem:4.3}, we have for $r\in (0,R/4)$ and $x\in B_R^c(0)$ that
\begin{equation*}
\begin{split}
u_s^{x_s}(x)\leq \sup_{y\in B_{\frac{r}{2}}(x)}u_s^{x_s}(y)
\leq C\inf_{y\in B_{\frac{r}{2}}(x)}u_s^{x_s}(y)
\leq C\Big(\int_{B_{3r}^c(0)}(u_s^{x_s})^p(y)\,dy\Big)^{\frac{1}{p}}\to 0
\end{split}
\end{equation*}
as $s\downarrow s_0$.

\end{proof}

Now, we will study the asymptotic behaviour of $x_s$.  For this purpose, we need to establish uniformly decaying laws with respect to $s$  for $u_s^{x_s}$, $v_s$ and $Q_s$.
First, we have the following result.

\begin{lemma}\label{lem:4.4}
For $s>s_0$ and close to $s_0$, there exist $C_1, C_2, C_3>0$, independent of $s$, such that
\begin{equation*}
\|(-\Delta)^{s/2}v_s\|_2^2\leq C_1, \quad \|v_s\|_{2^*_s}^{2^*_s}\leq C_2, \quad \|v_s\|_{p}^{p}\leq C_3.
\end{equation*}
\end{lemma}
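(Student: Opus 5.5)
The plan is to compute how each quantity transforms under the scaling \eqref{scaling1}--\eqref{scaling2}, and then bound it using Remark \ref{r:3.1} together with \eqref{thm2.2-aa}. With $v_s(x)=\mu_s^{\alpha_s}u_s(\mu_s x+x_s)$ and $\alpha_s=\frac{2s}{p-2}$, a change of variables gives
\begin{equation*}
\|(-\Delta)^{s/2}v_s\|_2^2=\mu_s^{2\alpha_s+2s-n}\|(-\Delta)^{s/2}u_s\|_2^2,\qquad
\|v_s\|_p^p=\mu_s^{p\alpha_s-n}\|u_s\|_p^p,
\end{equation*}
\begin{equation*}
\|v_s\|_{2_s^*}^{2_s^*}=\mu_s^{2_s^*\alpha_s-n}\|u_s\|_{2_s^*}^{2_s^*}.
\end{equation*}
Using $n=\frac{2ps_0}{p-2}$, the exponents become
\begin{equation*}
p\alpha_s-n=\frac{2p(s-s_0)}{p-2},\qquad 2\alpha_s+2s-n=\frac{2p(s-s_0)}{p-2}.
\end{equation*}
This agrees with the computation in the proof of Lemma \ref{lem2.2}. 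For the critical norm,
\begin{equation*}
2_s^*\alpha_s-n=2_s^*\Big(\frac{2s}{p-2}-\frac{n-2s}{2}\Big),
\end{equation*}
which tends to $0$ as $s\downarrow s_0$, since $\frac{2s_0}{p-2}=\frac{n-2s_0}{2}$. Each of these exponents is a bounded multiple of $(s-s_0)$.

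By Lemma \ref{lem2.2}, $\mu_s^{s-s_0}\to1$. Because each exponent equals $c(s)(s-s_0)$ with $c(s)$ bounded for $s$ near $s_0$, every power $\mu_s^{c(s)(s-s_0)}=(\mu_s^{s-s_0})^{c(s)}$ stays bounded, and in fact tends to $1$. If the sign of the exponent matters, I will simply note that $\mu_s\le 1$ for $s$ close to $s_0$, because $\mu_s\to0$ by Lemma \ref{lem:3.2}. Even without that, the ratio $(\mu_s^{s-s_0})^{c(s)}$ converges to $1$, so no sign consideration is needed.

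It then remains to have uniform bounds on $\|(-\Delta)^{s/2}u_s\|_2^2$, $\|u_s\|_p^p$ and $\|u_s\|_{2_s^*}$. All three are supplied by Remark \ref{r:3.1}, which uses \eqref{keyequality}, Lemma \ref{lem3.1} and the Sobolev inequality \eqref{eq:1.7}, with $S(n,s)$ bounded below uniformly by \eqref{eq:1.8}. Multiplying the bounded scaling factors by these bounded norms gives $C_1,C_2,C_3$ independent of $s$. Alternatively, $C_2$ follows directly from $C_1$ through $\|v_s\|_{2_s^*}^2\le S(n,s)^{-1}\|(-\Delta)^{s/2}v_s\|_2^2$.

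The only delicate point is the exponent bookkeeping: one must check that every exponent vanishes at $s=s_0$ and is Lipschitz in $s$. Once that is verified, \eqref{thm2.2-aa} does all the work. I do not expect any further obstacle.
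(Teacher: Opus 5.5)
Your proposal is correct and takes the same route as the paper, which simply cites Lemma \ref{lem2.2} (the fact that $\mu_s^{s-s_0}\to1$, or just $\mu_s\le 1$ combined with the positivity of the exponents) together with the uniform bounds on $u_s$ from Remark \ref{r:3.1}. Your explicit scaling exponents $\frac{2p(s-s_0)}{p-2}$ and $2_s^*\frac{p}{p-2}(s-s_0)$ are exactly the bookkeeping the paper leaves implicit.
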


\begin{proof} The conclusion readily follows from Lemma \ref{lem2.2} and Remark \ref{r:3.1}.

\end{proof}

\bigskip

Next, we have the decaying law for $u_s^{x_s}$.
\begin{lemma}\label{lem:B.3}
For $s>s_0$, $s$ close to $s_0$, and fixed $R>0$, we have that
\[
u_s^{x_s}(x)=u_s(x+x_s)\leq C|x|^{-n}, \text{  } |x|\geq R,
\]
where $C>0$ is independent  of $s$.
\end{lemma}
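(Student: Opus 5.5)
We prove the bound $u_s^{x_s}(x)\le C|x|^{-n}$ for $|x|\ge R$ by comparison with an explicit supersolution of the linear operator $(-\Delta)^s+\tfrac{V_0}{2}$ outside a large ball. The argument uses Proposition \ref{lem:4.1} to make the nonlinear term small there.

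\textbf{Step 1: reduction to a linear inequality.} By Proposition \ref{lem:4.1}, for every $R>0$ there is $s_R>s_0$ such that $(u_s^{x_s})^{p-2}\le V_0/2$ on $B_R^c(0)$ whenever $s_0<s<s_R$. Using $(V1)$ we then get
\[
(-\Delta)^s u_s^{x_s}+\frac{V_0}{2}\,u_s^{x_s}\le 0 \quad\text{in } B_R^c(0).
\]
Next, $u_s^{x_s}=\mu_s^{-\alpha_s}v_s$ is not uniformly bounded, since $\|u_s\|_\infty\to\infty$. So on $B_R$ we have no uniform bound, and we must work from the outside. Fix $R_0=R/2$. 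The Harnack argument in the proof of Proposition \ref{lem:4.1} gives a uniform bound on the annulus $R_0\le|x|\le 2R$, so $\sup_{|x|\ge R_0}u_s^{x_s}\le M$ with $M$ independent of $s$. The same reasoning also shows $u_s^{x_s}\le \varepsilon$ on $|x|\ge R_0$ for $s$ near $s_0$.

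\textbf{Step 2: a uniform barrier.} Let $w(x)=\min\{1,|x|^{-n}\}$, or a smooth version of it equal to $|x|^{-n}$ for $|x|\ge1$. We estimate $(-\Delta)^s w$ for $|x|$ large, uniformly for $s\in(s_0,(1+s_0)/2)$. Split the integral \eqref{eq:1.2} into three regions. On $|y-x|<|x|/2$, a Taylor expansion gives a contribution of size $D(n,s)(1-s)^{-1}|x|^{-n-2s}$. On $|y|<|x|/2$, the contribution is $-D(n,s)\int w\,|x-y|^{-n-2s}$. Since $w\notin L^1$, this is of order $-|x|^{-n-2s}\log|x|$, which is negative, so it helps. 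The remaining region contributes $O(s^{-1}|x|^{-n-2s})$.

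All constants are uniform because $D(n,s)$, $s^{-1}$ and $(1-s)^{-1}$ are bounded for $s$ in this range. Hence $(-\Delta)^s w\ge -C|x|^{-n-2s}\ge -\frac{V_0}{4}w$ for $|x|\ge R_1$, where $R_1$ is independent of $s$. Therefore $\bigl((-\Delta)^s+\frac{V_0}{2}\bigr)w\ge \frac{V_0}{4}w>0$ on $|x|\ge R_1$. We take $R_0=R_1$; for smaller $R$, the bound on $R\le|x|\le R_1$ follows from the uniform bound $M$ of Step 1, possibly after enlarging $C$.

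\textbf{Step 3: comparison.} Set $\phi=u_s^{x_s}-Kw$ with $K=M R_1^{n}$, so that $\phi\le0$ on $B_{R_1}$ is required. This is the main obstacle, since $u_s^{x_s}$ is huge near $0$ and an $L^\infty$ comparison there fails. To get around it, we split $u_s^{x_s}=u_s^{x_s}\chi_{B_{R_0}}+u_s^{x_s}\chi_{B_{R_0}^c}$. The nonlocal effect of the inner part on $|x|\ge R_1$ only enters $(-\Delta)^s$ through the term
\[
-D(n,s)\int_{B_{R_0}}\frac{u_s^{x_s}(y)}{|x-y|^{n+2s}}\,dy,
\]
which is $\le0$. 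So it may be dropped, and the outer part still satisfies the subsolution inequality on $|x|\ge R_1$.

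The outer part is bounded by $M$, so with the choice of $K$ we get $\phi\le0$ on $B_{R_1}$. Moreover $\phi\to0$ at infinity, because $u_s$ tends to $0$ at infinity as recalled in the introduction. If $\phi$ had a positive maximum at some $\bar x$ with $|\bar x|>R_1$, then $(-\Delta)^s\phi(\bar x)\ge0$ while $\frac{V_0}{2}\phi(\bar x)>0$. This contradicts
\[
\Bigl((-\Delta)^s+\frac{V_0}{2}\Bigr)\phi\le0 \quad\text{on } |x|>R_1.
\]
Hence $u_s^{x_s}\le K|x|^{-n}$ on $|x|\ge R_1$. Together with the bound $M$ on $R\le|x|\le R_1$, this gives the claim with $C$ independent of $s$.
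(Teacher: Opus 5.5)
There is a genuine gap in Step 3, and it comes from a sign error. Write $u=u_s^{x_s}=\tilde u+g$ with $g=u\chi_{B_{R_0}}\ge 0$. For $|x|>R_0$ you correctly have $(-\Delta)^s g(x)=-D(n,s)\int_{B_{R_0}}u(y)|x-y|^{-n-2s}\,dy\le 0$. However, it enters with the opposite sign from the one you use:
\[
(-\Delta)^s\tilde u(x)=(-\Delta)^s u(x)-(-\Delta)^s g(x)=(-\Delta)^s u(x)+D(n,s)\int_{B_{R_0}}\frac{u(y)}{|x-y|^{n+2s}}\,dy .
\]
So cutting off the inner part \emph{increases} $(-\Delta)^s$ outside $B_{R_0}$. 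On $|x|\ge R_1$, $\tilde u$ therefore only satisfies $\bigl((-\Delta)^s+\frac{V_0}{2}\bigr)\tilde u\le D(n,s)\int_{B_{R_0}}u(y)|x-y|^{-n-2s}\,dy$, which has a positive source. It is not a subsolution, and your maximum-principle argument does not close. This is not cosmetic. The mass of $u_s^{x_s}$ near the blow-up point is precisely what generates its tail. Since $\|u_s\|_\infty\to\infty$, and your argument contains nothing that keeps this mass bounded in $s$, no $s$-independent constant can come out of it.

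The missing ingredient is exactly the one the paper's proof rests on. By H\"older and Remark \ref{r:3.1}, $\int_{B_{R_0}}u_s^{x_s}\,dx\le|B_{R_0}|^{1-1/p}\|u_s\|_p\le C$. Hence for $|x|\ge R_1\ge 2R_0$ the source is at most $C|x|^{-n-2s}$, uniformly in $s$. It is then absorbed by the barrier, since $\bigl((-\Delta)^s+\frac{V_0}{2}\bigr)(Kw)\ge\frac{V_0}{4}K|x|^{-n}\ge C|x|^{-n-2s}$ once $K$ is large, independently of $s$. With this repair your comparison argument works and is a legitimate alternative to the paper's route. The paper instead writes $u_s^{x_s}$ as the convolution of the fundamental solution $K_{s,V_0/2}$ of $(-\Delta)^s+\frac{V_0}{2}$ with $f_s$. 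It uses $f_s\le 0$ on $B_{R/2}^c$ and the imported uniform kernel bound $K_{s,V_0/2}(x)\le C|x|^{-n}$, and bounds $\int_{B_{R/2}}f_s\le C\int_{B_{R/2}}(1+(u_s^{x_s})^p)\le C$. Your version replaces the kernel estimate by an explicit barrier computation.

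That computation in Step 2 has a smaller sign slip. The contribution of $\{|y|<|x|/2\}$ to $(-\Delta)^s w(x)$ is negative, of order $-|x|^{-n-2s}\log|x|$, so it works \emph{against} the supersolution property rather than helping. It is harmless only because $|x|^{-2s}\log|x|\le|x|^{-2s_0}\log|x|\to0$ uniformly in $s\ge s_0$. The correct lower bound is $(-\Delta)^s w\ge -C|x|^{-n-2s}(1+\log|x|)$, which still yields $(-\Delta)^s w\ge-\frac{V_0}{4}w$ for $|x|\ge R_1$ with $R_1$ independent of $s$.
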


\begin{proof}
 We rewrite \eqref{eq:1.1} as
\begin{equation}\label{lem:B.3-1}
(-\Delta)^su_s^{x_s}(x)+\frac{1}{2}V_0u_s^{x_s}(x)=((u_s^{x_s})^{p-2}(x)-V(x+x_s)+\frac{1}{2}V_0)u_s^{x_s}(x):=f_s(x).
\end{equation}
By Proposition  \ref{lem:4.1}, for $s>s_0$, $s$ close to $s_0$, if $|x|\geq R/2$ we have that
\begin{equation*}
(u_s^{x_s})^{p-2}(x)\leq \frac{1}{2}V_0\leq V(x+x_s)-\frac{1}{2}V_0,
\end{equation*}
that is,
$f_s(x)\leq 0$
 for  $x\in \mathbb{R}^n$ such that $|x|\geq \frac R2$.

Let $K_{s, \frac 12V_0}$ be the fundamental solution of
$$
(-\Delta)^su+\frac{1}{2}V_0u=0.
$$
By Lemma C.1 in \cite{FLS},
\[
0<K_{s, \frac 12V_0}(x)\leq C(n,s_0)V_0^{-1}|x|^{-n}
\]
provided that  $|x|>0$ and $s>s_0$.
Therefore, if $|x|\geq R$,
\begin{equation*}
\begin{split}
u_s^{x_s}(x)
%&=\int_{\mathbb{R}^n}K_{s, (1/2)P_0}(x-y)f_s(y)\,dy\\
&\leq \int_{\{|y|\leq \frac R2\}}K_{s, \frac 12V_0}(x-y)f_s(y)\,dy\leq \int_{\{|y|\leq \frac R2\}}\frac {C(n,s_0)}{V_0|x-y|^{n}}f_s(y)\,dy\leq \frac{C(n,s_0)}{2^{n}V_0|x|^{n}}\int_{\{|y|\leq \frac R2\}}f_s(y)\,dy.
\end{split}
\end{equation*}
By \eqref{r:3.1-2} and \eqref{lem:B.3-1}, we have
\begin{equation*}
\begin{split}
\int_{\{|y|\leq \frac R2\}}f_s(y)\,dy
\leq (1+2V_\infty)\int_{\{|y|\leq \frac R2\}}[1+(u_s^{x_s})^p(y)]\,dy\leq C.
\end{split}
\end{equation*}
It implies the conclusion.

\end{proof}

%\bigskip

Now, we turn to the study of the decaying law for $v_s$.
\begin{lemma}\label{lem:B.4}
For $s>s_0$ and $s$ close to $s_0$, there exists $C>0$ independent of $s$ such that
\begin{equation*}
Q_s(x)\leq \frac{C}{(1+|x|)^{n-2s}} \quad{\rm and } \quad v_s(x)\leq \frac{C}{(1+|x|)^{n-2s}}, \quad x\in \mathbb{R}^n.
\end{equation*}
\end{lemma}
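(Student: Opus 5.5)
The bound for $Q_s$ is immediate from the explicit formula \eqref{eq:1.9}. Since $\lambda_s$ is continuous in $s$, it stays in a compact subset of $(0,\infty)$ for $s$ near $s_0$. Hence
$$Q_s(x)=\Big(1+\frac{|x|^2}{\lambda_s^2}\Big)^{\frac{2s-n}{2}}\le C(1+|x|)^{-(n-2s)},$$
with $C$ independent of $s$. The work is therefore in the bound for $v_s$. Following the strategy announced in the introduction, I will use a Kelvin transform to turn the decay at infinity into a local $L^\infty$ bound near the origin. That local bound will come from the uniform estimates of Lemmas \ref{lem A.1} and \ref{lem:A.3}.

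First I drop the potential term in \eqref{thm1.2-1}, since $V>0$. This gives
$$(-\Delta)^s v_s\le v_s^{p-2}v_s.$$
Define the Kelvin transform
$$\tilde v_s(x)=|x|^{-(n-2s)}v_s\big(x/|x|^2\big).$$
The standard Kelvin transform identity for $(-\Delta)^s$ then yields
$$(-\Delta)^s\tilde v_s(x)\le |x|^{-4s}v_s^{p-2}\big(x/|x|^2\big)\,\tilde v_s(x)=a_s(x)\tilde v_s(x),$$
where
$$a_s(x)=|x|^{-4s+(p-2)(n-2s)}\,\tilde v_s^{\,p-2}(x).$$
Since $p$ is close to $2^*_s$ (indeed $p=2^*_{s_0}$), the exponent $-4s+(p-2)(n-2s)$ is of order $s-s_0$, so $a_s\approx \tilde v_s^{\,2^*_s-2}$ up to a factor $|x|^{O(s-s_0)}$ that is harmless on bounded sets. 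Moreover $\|\tilde v_s\|_{2^*_s}=\|v_s\|_{2^*_s}$ and $\|(-\Delta)^{s/2}\tilde v_s\|_2=\|(-\Delta)^{s/2}v_s\|_2$, both bounded uniformly by Lemma \ref{lem:4.4}.

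The goal is a uniform bound $\tilde v_s\le C$ on $B_1$. Given that, for $|x|\ge1$ we get
$$v_s(x)=|x|^{-(n-2s)}\tilde v_s\big(x/|x|^2\big)\le C|x|^{-(n-2s)},$$
and together with $v_s\le1$ this gives the claim.

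To get the local bound I first apply Lemma \ref{lem A.1}. This requires
$$\int_{B_{2r}}a_s^{n/(2s)}<\delta_0$$
uniformly in $s$. Now $\int_{B_{2r}}a_s^{n/(2s)}$ is comparable to $\int_{B_{2r}}\tilde v_s^{2^*_s}=\int_{|y|\ge 1/(2r)}v_s^{2^*_s}$. I will show this tail is uniformly small for $r$ small. The argument is that $v_s\to Q_{s_0}$ in $L^p$ (Lemma \ref{lem2.2}) and $2^*_s\to p$. To handle the exponent mismatch I interpolate between $L^p$ and the uniform $L^{2^*_s}$ bound, or use the $C_{loc}$ convergence together with the tightness of $\|v_s\|_p^p\to\|Q_{s_0}\|_p^p$. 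Lemma \ref{lem A.1} then gives $\tilde v_s\in L^{(2^*_s)^2/2}(B_r)$ uniformly.

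As a result, $a_s\in L^t(B_r)$ with $t=\frac{(2^*_s)^2}{2(2^*_s-2)}>\frac{n}{2s_0}$, again uniformly. Now I run the Moser iteration of Lemma \ref{lem:A.3}, localized to $B_r$ with a cutoff as in Lemma \ref{lem A.1}, or alternatively apply Lemma \ref{harnack} to the extension. This gives $\sup_{B_{r/2}}\tilde v_s\le C\|\tilde v_s\|_{2^*_s}\le C$. On the annulus $r/2\le|x|\le1$, the bound $\tilde v_s\le C r^{-(n-2s)}$ follows directly from $v_s\le1$, which completes the bound on $B_1$.

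The main obstacle is the smallness condition $\int_{B_{2r}}a_s^{n/(2s)}<\delta_0$ uniformly in $s$. Its verification needs the uniform tightness of $v_s^{2^*_s}$ at infinity, and this is where the strong $L^p$ convergence of Lemma \ref{lem2.2} and the bound $\mu_s^{s-s_0}\to1$ must be combined carefully, because $p\ne 2^*_s$. A second delicate point is to check that the Kelvin transform inequality holds in the weak sense needed by Lemmas \ref{lem A.1} and \ref{lem:A.3}, i.e.\ that $\tilde v_s\in \dot H^s$. This should follow from the invariance of the $\dot H^s$ seminorm under the Kelvin transform.
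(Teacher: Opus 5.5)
You follow the paper's architecture: Kelvin transform, the smallness hypothesis of Lemma \ref{lem A.1}, then an $L^q$ bound on $a_s$ with $q>\frac{n}{2s_0}$, then Lemma \ref{lem:A.3}. But the step you flag as the main obstacle is set up incorrectly, and the tools you propose for it cannot work.

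The heuristic $a_s\approx\tilde v_s^{\,2^*_s-2}$ is false. In fact $a_s=|x|^{-2p(s-s_0)}\tilde v_s^{\,p-2}$, and the ratio $\tilde v_s^{\,p-2}/\tilde v_s^{\,2^*_s-2}=\tilde v_s^{\,-(2^*_s-p)}$ is not a power of $|x|$. The substitution $y=x/|x|^2$ gives the exact identity
\[
\int_{B_{2r}}a_s^{\frac{n}{2s}}\,dx=\int_{\{|y|\ge \frac{1}{2r}\}}v_s^{\frac{ps_0}{s}}\,dy .
\]
Its exponent is $\frac{ps_0}{s}<p<2^*_s$, not $2^*_s$.

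The factor $|x|^{-2p(s-s_0)}$ is not harmless, because $x=0$ corresponds to $|y|=\infty$. If you remove it by H\"older to reach $\tilde v_s^{\,p}$ or $\tilde v_s^{\,2^*_s}$, the conjugate weights are $|x|^{-pn}$ or $|x|^{-2n}$, neither integrable at the origin.

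Since $v_s\le 1$, the $L^p$ and $L^{2^*_s}$ tails are small, but they do not control this lower-exponent tail. Add to $Q_{s_0}$ a smooth far-away plateau of height $h_s=e^{-(s-s_0)^{-2}}$ and volume $h_s^{-ps_0/s}$. This preserves $L^p$-convergence, the $L^{2^*_s}$ bound and even the $\dot H^s$ bound, yet contributes $1$ to $\int v_s^{ps_0/s}$ outside any ball. So neither interpolation between $L^p$ and $L^{2^*_s}$, nor $L^p$-tightness with $C_{loc}$ convergence, yields the smallness. As written, the step presupposes the decay of $v_s$ at infinity that you are trying to prove.

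The missing ingredient is independent decay information on $v_s$ at large $|y|$. The paper gets it from Lemma \ref{lem:B.3}: $u_s(x+x_s)\le C|x|^{-n}$ for $|x|\ge R$, proved via the kernel of $(-\Delta)^s+\frac12V_0$ and Proposition \ref{lem:4.1}. This handles the region $\{|x|\le\mu_s^2\}$. On $\{\mu_s^2\le|x|\le 2r\}$ the weight is at most $\mu_s^{-c(s-s_0)}$, which is bounded by \eqref{thm2.2-aa}. H\"older on this bounded set, together with $\bar v_s\to\bar Q_{s_0}$ in $L^p$, then gives smallness.

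Within your framework, a shorter repair is to interpolate the tail between $L^2$ and $L^p$. By Remark \ref{r:3.1},
\[
\|v_s\|_2^2=\mu_s^{2\alpha_s-n}\|u_s\|_2^2\le C\mu_s^{-2s_0+(s-s_0)(n-2s_0)/s_0}.
\]
The $L^2$-exponent in the interpolation for $\frac{ps_0}{s}$ is $\theta=\frac{2(s-s_0)}{s_0(p-2)}$. So $\|v_s\|_2^{\theta}$ stays bounded because $\mu_s^{s-s_0}\to 1$, while the $L^p$ tail is small by Lemma \ref{lem2.2}.

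There are two smaller corrections.
\begin{itemize}
\item In the second step, use the true $a_s$ with a fixed $q\in\big(\frac{n}{2s_0},\frac{p^2}{2(p-2)}\big)$, and apply H\"older against the weight, which is integrable for $s$ near $s_0$.
\item Lemma \ref{lem:A.3} is global, so you also need $\int_{\mathbb{R}^n\setminus B_r}a_s^q\le C$. This follows from $a_s\le|x|^{-4s}$ (a consequence of $v_s\le 1$) and $4sq>n$.
\end{itemize}
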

\begin{proof}

The inequality for $Q_s$ comes from \eqref{eq:1.9}. So we only need to prove the inequality for $v_s$.

Since $v_s\leq1$, it suffices to prove that
\begin{equation*}
v_s(x)\leq C|x|^{-(n-2s)}, \quad x\in \mathbb{R}^n.
\end{equation*}
This inequality is equivalent to $\Bar{v}_s(x)\leq C$,
where $\Bar{v}_s$ is Kelvin transform of $v_s$, that is,
\begin{equation*}
\Bar{v}_s(x)=\frac{1}{|x|^{n-2s}}v_s(\frac{x}{|x|^2}).
\end{equation*}
By $0<v_s\leq 1$,
\begin{equation}\label{lem:B.4-3a}
\Bar{v}_s(x)\leq \frac{1}{|x|^{n-2s}}.
\end{equation}
It is well known that
\begin{equation*}
(-\Delta)^s\Bar{v}_s(x)=\frac{1}{|x|^{n+2s}}(-\Delta)^sv_s(\frac{x}{|x|^2}),
\end{equation*}
thus,
\begin{equation*}
(-\Delta)^s\Bar{v}_s(x)+|x|^{-4s}V(\mu_s\frac{x}{|x|^2}+x_s)\Bar{v}_s(x)=|x|^{-2p(s-s_0)}\Bar{v}_s^{p-1}(x)
\end{equation*}
and
\begin{equation*}
(-\Delta)^s\Bar{v}_s(x)\leq a(x)\Bar{v}_s,
\end{equation*}
where $a(x)=|x|^{-2p(s-s_0)}\Bar{v}_s^{p-2}(x)$. By \eqref{lem:B.4-3a}, we have
\begin{equation}\label{lem:B.4-5a}
0<a(x)\leq  |x|^{-2p(s-s_0)-(p-2)(n-2s)}.
\end{equation}

Let $r>0$. Then
\begin{equation*}
\int_{B_{2r}}a(x)^{\frac{n}{2s}}\,dx=\Big(\int_{\{|x|\leq \mu_s^2\}}+\int_{\{\mu_s^2\leq |x|\leq 2r\}}\Big)a(x)^{\frac{n}{2s}}\,dx:=I_1+I_2.
\end{equation*}

We first estimate $I_2$.

By the H\"{o}lder inequality, $s>s_0=\frac{(p-2)n}{2p}$ and \eqref{thm2.2-aa}, we have, for $s>s_0$  and close to $s_0$ that,
\begin{equation}\label{lem:B.4-7}
\begin{split}
I_2
&\leq \mu_s^{-\frac{p(s-s_0)n}{s}}\int_{\{\mu_s^2\leq |x|\leq 2r\}}\Bar{v}_s^{\frac{(p-2)n}{2s}}(x)\,dx
\leq C \left( \int_{B_{2r}} \Bar{v}_s^{p}(x) \, dx \right)^{\frac{(p-2)n}{2sp}}.
\end{split}
\end{equation}
By Lemma \ref{lem2.2:a}
and $0<v_s,{Q}_{s_0}\leq 1$, we demonstrate for $s>s_0$ and close to $s_0$ that,
\begin{equation*}
\begin{split}
\int_{\mathbb{R}^n}|\Bar{v}_s-\bar{Q}_s|^p\,dx
&=\int_{\{|x|\leq 1\}}|x|^{-2p(s-s_0)}|{v}_s-{Q}_s|^p\,dx+\int_{\{|x|\geq 1\}}|x|^{-2p(s-s_0)}|{v}_s-{Q}_s|^p\,dx\\
&\leq \Big[\int_{\{|x|\leq 1\}}|x|^{-4p(s-s_0)}\,dx\Big]^{1/2}2^p\Big[\int_{\{|x|\leq 1\}}|{v}_s-{Q}_s|^{p}\,dx\Big]^{1/2}+\int_{\{|x|\geq 1\}}|{v}_s-{Q}_s|^p\,dx\\
&\to 0
\end{split}
\end{equation*}
as $s\downarrow s_0$.
This inequality and the fact that
$$\lim_{s\downarrow s_0}\bar{Q}_{s}=\bar{Q}_{s_0}\quad {\rm in}\quad  L^p(\mathbb{R}^n)
$$
yield that
$$\lim_{s\downarrow s_0}\bar{v}_{s}=\bar{Q}_{s_0}\quad {\rm in}\quad  L^p(\mathbb{R}^n),
$$
where $\bar{Q}_{s_0}=|x|^{2s_0-n}Q_{s_0}(\frac{x}{|x|^2})$.  Hence,  we can choose $r$ small enough such that $I_2<\frac{1}2\delta_0$ via \eqref{lem:B.4-7}, here $\delta_0$ is again the same as that in Lemma \ref{lem A.1}.

Next, we estimate $I_1$. By Lemma \ref{lem:B.3} and $\lim_{s\downarrow s_0}\frac{(p-2)n}{2s}=p\geq2$, for $s>s_0$  and close to $s_0$, there exists $C>0$, independent of $s$, such that
\begin{equation}\label{lem:B.4-8}
\begin{split}
I_1
\leq C\mu_s^{n-\frac{(p-2)n^2}{2s}}\int_{\{|y|\geq \mu_s^{-2}\}}|y|^{\frac{p(s-s_0)n}{s}}|y|^{-\frac{(p-2)n^2}{2s}}\,dy=C\mu_s^{-n-\frac{2p(s-s_0)n}{s}+\frac{(p-2)n^2}{2s}}<\frac12\delta_0.
\end{split}
\end{equation}

It follows from \eqref{lem:B.4-7}, \eqref{lem:B.4-8},  Lemma \ref{lem A.1} and Lemma \ref{lem:4.4} that
\begin{equation}\label{lem:B.4-9-1}
\|\Bar{v}_s\|_{L^{\frac{(2_s^*)^2}{2}}(B_r)}\leq C_0\|\Bar{v}_s\|_{L^{2_s^*}(\mathbb{R}^n)}=C_0\|v_s\|_{L^{2_s^*}(\mathbb{R}^n)}\leq C,
\end{equation}
where  $C>0$ is independent of $s$.
For $s>s_0$  and close to $s_0$, there exists $q$, independent of $s$,  such that
\begin{equation}\label{lem:B.4-9}
\frac{(2_{s_0}^*)^2}{2(p-2)}>q>\frac{n}{2s_0}.
\end{equation}
For this $q$, we will estimate the integral
\begin{equation*}
\int_{B_{r}}a^{q}(x)\,dx=\Big(\int_{\{|x|\leq \mu_s^2\}}+\int_{\{\mu_s^2\leq |x|\leq r\}}\Big)a^{q}(x)\,dx:=J_1+J_2.
\end{equation*}

By Lemma \ref{lem:B.3}, for $s>s_0$  and close to $s_0$, there exists $C>0$, independent of $s$, such that
\begin{equation*}
\begin{split}
J_1
&=\mu_s^{2sq}\int_{\{|y|\geq \mu_s^{-2}\}}|y|^{2pq(s-s_0)}\bar{u}_s^{(p-2)q}(\mu_sy)\,dy
\leq C\mu_s^{2sq-2pq(s-s_0)+(p-2)qn-2n}\int_{\{|y|\geq 1\}}|y|^{2pq(s-s_0)-(p-2)qn}\,dy.
\end{split}
\end{equation*}
Observing that
$\lim_{s\downarrow s_0}\mu_s=0
$
 and
\[
\lim_{s\to (s_0)_+}2sq-2pq(s-s_0)+(p-2)qn-2n>n+(p-2)n\frac{n}{2s_0}-2n=(p-1)n>0,
\]
we get
\[
J_1\leq C.
\]

On the other hand,
by the H\"{o}lder inequality, \eqref{lem:B.4-9-1}, \eqref{lem:B.4-9} and \eqref{thm2.2-aa}, we have, for $s>s_0$  and close to $s_0$ that,
\begin{equation*}
\begin{split}
J_2\leq \mu_s^{-4pq(s-s_0)}\int_{\{\mu_s^2\leq |x|\leq 2r\}}\Bar{v}_s^{(p-2)q}(x)\,dx\leq C \left( \int_{B_{2r}} \Bar{v}_s^{\frac{(2_s^*)^2}{2}}(x) \, dx \right)^{\frac{2(p-2)q}{(2_s^*)^2}}\leq C.
\end{split}
\end{equation*}
Therefore, $$\int_{B_{r}}a^{q}(x)\,dx\leq C$$
uniformly with respect to $s$ for $s>s_0$  and close to $s_0$.

By \eqref{lem:B.4-5a}, \eqref{lem:B.4-9} and
\[
\lim_{s\downarrow s_0}2p(s-s_0)-(p-2)(n-2s)=4s_0,
\]
we have
$$\int_{\mathbb{R}^n\setminus B_{r}}a^{q}(x)\,dx\leq C.$$
Consequently,
$$\int_{\mathbb{R}^n}a^{q}(x)\,dx\leq C.$$

Now we can use Lemma \ref{lem:A.3} to conclude that
\[
\|\Bar{v}_s\|_{\infty}\leq  C \| \Bar{v}_s\|_{2_s^*}\leq C.
\]
The proof is completed.
\end{proof}

\bigskip

We are ready to locate the limit point  of $x_s$.

Let $\mathcal{V}_m=\{x\in \mathbb{R}^n: V(x)= \inf_{x\in \mathbb{R}^n}V(x)\}$ be the set of minimum points of $V$.
Although  $\mathcal{V}_m$ may contain many points, we show that $V(x_{s})$ converges to the same value. That is:
\begin{proposition}\label{lem:4.2} There holds that
\[
\lim_{s\downarrow s_0}V(x_{s})=\inf_{x\in \mathbb{R}^n}V(x).
\]
\end{proposition}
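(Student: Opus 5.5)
We argue by comparing the energy of $u_s$ with that of its translate $u_s(\cdot+x_s-x_0)$, where $x_0$ is a (near-)minimum point of $V$; this is the strategy suggested after Theorem \ref{thm:1.1} and inspired by \cite{WZ}. Fix $\varepsilon>0$ and choose $x_0$ with $V(x_0)\le \inf V+\varepsilon$. Set $\tilde u_s(x)=u_s(x+x_s-x_0)$. Translation invariance gives $\|(-\Delta)^{s/2}\tilde u_s\|_2=\|(-\Delta)^{s/2}u_s\|_2$ and $\|\tilde u_s\|_p=\|u_s\|_p$. Since $u_s$ is a minimizer of $S_V(n,s)$ (Lemma \ref{groundminimizer}), the inequality $E_{V,s}(u_s)\le E_{V,s}(\tilde u_s)$ reduces to
\begin{equation*}
\int_{\mathbb{R}^n}V(x+x_s)\,(u_s^{x_s})^2\,dx\le \int_{\mathbb{R}^n}V(x+x_0)\,(u_s^{x_s})^2\,dx .
\end{equation*}
Equivalently,
\begin{equation*}
\int_{\mathbb{R}^n}\big[V(x+x_s)-V(x+x_0)\big]\,(u_s^{x_s})^2\,dx\le 0.
\end{equation*}

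The main point is that the $L^2$ mass of $u_s^{x_s}$ concentrates at the origin, relative to its total mass, as $s\downarrow s_0$. Write $m_s=\|u_s\|_2^2$. I would show that for every fixed $R>0$,
\begin{equation*}
\frac{1}{m_s}\int_{\{|x|\ge R\}}(u_s^{x_s})^2\,dx\to0 .
\end{equation*}
This step is the main obstacle, since $u_s^{x_s}\to0$ away from $0$ while the total mass $m_s$ also tends to $0$, so the tails have to be compared with $m_s$ quantitatively.

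For the tail, Lemma \ref{lem:B.3} gives $u_s^{x_s}\le C|x|^{-n}$ on $|x|\ge R$. It would be convenient to combine this with Proposition \ref{lem:4.1}, which makes the tail contribution small, at least of order $\sup_{|x|\ge R}u_s^{x_s}\cdot\int_{|x|\ge R}|x|^{-n}u_s^{x_s}$. For a cleaner comparison, pass to the rescaled function $v_s$:
\begin{equation*}
\int (u_s^{x_s})^2 = \mu_s^{n-2\alpha_s}\int v_s^2 .
\end{equation*}
Using Lemma \ref{lem:B.4}, $v_s\le C(1+|y|)^{-(n-2s)}$, and $2(n-2s_0)>n$ follows from $n\ge4$ and $s_0<1$. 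Hence $v_s^2$ is dominated uniformly by an integrable function. Since $v_s\to Q_{s_0}$ locally uniformly, dominated convergence yields $\int v_s^2\to\int Q_{s_0}^2>0$ and
\begin{equation*}
\int_{\{|y|\ge R/\mu_s\}}v_s^2\to0 .
\end{equation*}
This gives the relative concentration, because $\mu_s\to0$ by Lemma \ref{lem:3.2}. Here $n\ge4$ is exactly what makes $Q_{s_0}\in L^2$.

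It remains to conclude. Divide the energy inequality by $m_s$ and change variables $x=\mu_s y$:
\begin{equation*}
\int_{\mathbb{R}^n}\big[V(\mu_s y+x_s)-V(\mu_s y+x_0)\big]\,v_s^2\,dy\le0 .
\end{equation*}
$V$ is bounded, and by continuity $V(\mu_s y+x_0)\to V(x_0)$ locally uniformly in $y$. The concentration from the previous step gives
\begin{equation*}
\limsup_{s\downarrow s_0}\frac{\int V(\mu_s y+x_s)v_s^2}{\int v_s^2}\le V(x_0)\le\inf V+\varepsilon .
\end{equation*}
On the other hand, uniform continuity of $V$ on compact sets handles the case where $x_s$ stays bounded. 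If $|x_s|\to\infty$, then $V\to V_\infty$ there, so $V(\mu_s y+x_s)\to V_\infty>\inf V$ uniformly on $|y|\le R$, which contradicts the above for $\varepsilon$ small. In the bounded case, $V(\mu_s y+x_s)-V(x_s)\to0$ uniformly on $|y|\le R$, so
\begin{equation*}
\limsup_{s\downarrow s_0} V(x_s)\le \inf V+\varepsilon .
\end{equation*}
Letting $\varepsilon\to0$ gives $\lim_{s\downarrow s_0}V(x_s)=\inf_{x\in\mathbb{R}^n}V(x)$.
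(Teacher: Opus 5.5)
Your proposal is correct and follows essentially the same route as the paper. Like the paper, you compare $E_{V,s}(u_s)$ with the energy of the translate $u_s(\cdot+x_s-x_0)$, rescale to $v_s$ and pass to the limit using $v_s\to Q_{s_0}$, rule out $|x_s|\to\infty$ via $V_\infty>\inf V$, and conclude in the bounded case. The differences are only cosmetic: you take a near-minimizer $x_0$ instead of a point of $\mathcal{V}_m$, and you use local uniform convergence plus tail control where the paper uses Fatou's lemma. You also make explicit that the dominated-convergence step needs the uniform decay of Lemma \ref{lem:B.4}, since $L^p$ convergence alone does not control $\int v_s^2$; the paper uses this tacitly.
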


\begin{proof} Let $x_0\in \mathcal{V}_m$.
Since
\[
E_{V,s}(u_s)=\inf_{u\in H^s(\mathbb{R}^n)\setminus\{0\}}E_{V,s}(u)\leq E_{V,s}(u_s(\cdot+x_s-x_0)),
\]
and
\[
\|(-\Delta)^{s/2}u_s(\cdot+x_s-x_0)\|_2^2=\|(-\Delta)^{s/2}u_s\|_2^2,\quad \|u_s(\cdot+x_s-x_0)\|_p^p=\|u_s\|_p^p,
\]
 we deduce from  Lemma \ref{lem2.2} and the Lebesgue dominated convergence theorem that as $s\downarrow s_0$,
\begin{equation}\label{location-1}
\begin{split}
\int_{\mathbb{R}^n}V(x)u_s^2(x)\,dx&\leq \int_{\mathbb{R}^n}V(x)u_s^2(x+x_s-x_0)\,dx\\
&=\mu_s^{2s}\mu_s^{-\frac{2p}{p-2}(s-s_0)}\int_{\mathbb{R}^n}V(\mu_sx+x_0)v_s^2(x)\,dx\\
&=\mu_s^{2s}\Big(\inf_{x\in \mathbb{R}^n}V(x)\int_{\mathbb{R}^n}Q_{s_0}^2(x)\,dx+o(1)\Big)
\end{split}
\end{equation}

 For any sequence $\{s_k\}$ such that $s_k\downarrow s_0$ as $k\to\infty$, we claim that $\{x_{s_k}\}$ is bounded.

Suppose  on the contrary, there exists a subsequence,
denoted also by $\{s_k\}$, such that $\lim_{k\to\infty}|x_{s_k}|=+\infty$. By the fact that
\begin{equation}\label{location-2}
\begin{split}
\int_{\mathbb{R}^n}V(x)u_s^2(x)\,dx
&=\mu_s^{2s}\mu_s^{-\frac{2p}{p-2}(s-s_0)}\int_{\mathbb{R}^n}V(\mu_sx+x_s)v_s^2(x)\,dx,
\end{split}
\end{equation}
the assumption $(V1)$, Lemma \ref{lem2.2} and the Fatou lemma, we have as $s\downarrow s_0$,
\begin{equation*}
\begin{split}
\int_{\mathbb{R}^n}V(x)u_s^2(x)\,dx\geq\mu_s^{2s}\Big(V_\infty\int_{\mathbb{R}^n}Q_{s_0}^2(x)\,dx+o(1)\Big).
\end{split}
\end{equation*}
This inequality and  \eqref{location-1} yield
$$V_\infty\leq \inf_{x\in \mathbb{R}^n}V(x),$$
 which contradicts to the assumption $(V1)$.
Hence, $\{x_{s_k}\}$ is bounded. We may assume there exists $y_0\in \mathbb{R}^n$ such that $\lim_{k\to\infty}x_{s_k}=y_0$.

By \eqref{location-2}, the assumption $(V1)$, Lemma \ref{lem2.2} and the Fatou lemma, we have  as $s\downarrow s_0$,
\begin{equation*}
\begin{split}
\int_{\mathbb{R}^n}V(x)u_s^2(x)\,dx\geq\mu_s^{2s}\Big(V(y_0)\int_{\mathbb{R}^n}Q_{s_0}^2(x)\,dx+o(1)\Big)
\end{split}
\end{equation*}

Hence,  \eqref{location-1} and $V(y_0)\geq \inf_{x\in \mathbb{R}^n}V(x)$ implies
$$V(y_0)=\inf_{x\in \mathbb{R}^n}V(x).$$
As a result,
\[
\lim_{k\to\infty}V(x_{s_k})=\inf_{x\in \mathbb{R}^n}V(x).
\]
 Since  $\{s_k\}$ is arbitrary, the conclusion follows.
\end{proof}

\bigskip

By Theorem 5 in \cite{CW}, for the positive solution $u\in H^s(\mathbb{R}^n)\cap L^\infty(\mathbb{R}^n)$ of the equation
\[
(-\Delta)^su=f(x,u) \text{ in }\mathbb{R}^n,
\]
the Pohozaev identity
\begin{equation}\label{peq}
\frac{n-2s}{2}\int_{\mathbb{R}^n}f(x,u)u\,dx=\int_{\mathbb{R}^n}[nF(x,u)+x\cdot\nabla_xF(x,u)]\,dx
\end{equation}
holds provided that $f(x,u)u$, $F(x,u)$ and $x\cdot\nabla_xF(x,u)$ are in $L^\infty(\mathbb{R}^n)$.

Now we estimate the blowup rate of $\|u_s\|_{\infty}$ as $s\downarrow s_0$.
\begin{lemma}\label{blowup rate}
Let $n\geq 4$. Then there exists $A_{n,s_0}>0$ such that
\[
\lim_{s\downarrow s_0}(s-s_0)\|u_{s}\|^{p-2}_{\infty}=A_{n,s_0}\inf_{x\in \mathbb{R}^n}V(x).
\]
\end{lemma}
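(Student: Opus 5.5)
The plan is to apply the Pohozaev identity \eqref{peq} to $u_s$ and compare it with the energy identity \eqref{keyequality}. Take $f(x,u)=|u|^{p-2}u-V(x)u$, so that $F(x,u)=\frac1p|u|^p-\frac12V(x)u^2$ and $x\cdot\nabla_xF=-\frac12(x\cdot\nabla V)u^2$. The hypotheses of \eqref{peq} hold: $u_s\in L^\infty$ decays like $|x|^{-n}$ by Lemma \ref{lem:B.3}, $V$ is bounded by $(V1)$, and $x\cdot\nabla V$ is bounded by $(V2)$. This gives
\begin{equation*}
\frac{n-2s}{2}\Big(\int u_s^p-\int Vu_s^2\Big)=\frac np\int u_s^p-\frac n2\int Vu_s^2-\frac12\int (x\cdot\nabla V)u_s^2 .
\end{equation*}
Since $\frac np-\frac{n-2s}{2}=s-s_0$, rearranging yields the key identity
\begin{equation*}
(s-s_0)\int_{\mathbb{R}^n}u_s^p\,dx= s\int_{\mathbb{R}^n}Vu_s^2\,dx+\frac12\int_{\mathbb{R}^n}(x\cdot\nabla V)u_s^2\,dx .
\end{equation*}
Translation invariance is not available because $V$ is not translation invariant. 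The identity should therefore be applied about the origin as written. Alternatively one could use the translated form with $(x-x_s)\cdot\nabla V$, which is legitimate by translating the equation.

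Next I rescale with $v_s$ from \eqref{scaling1}. By Remark \ref{r:3.1}, $\int u_s^p\to S(n,s_0)^{p/(p-2)}=\|Q_{s_0}\|_p^p$. On the right, $\int Vu_s^2=\mu_s^{2s}\mu_s^{-\frac{2p}{p-2}(s-s_0)}\int V(\mu_sx+x_s)v_s^2\,dx$. By \eqref{thm2.2-aa} the factor $\mu_s^{-\frac{2p}{p-2}(s-s_0)}\to1$. I then need
\begin{equation*}
\int V(\mu_sx+x_s)v_s^2\,dx\to \inf V\int Q_{s_0}^2 .
\end{equation*}
This is where $n\ge4$ enters. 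Since $n-2s_0\ge n-2>n/2$, the decay $(1+|x|)^{-2(n-2s)}$ from Lemma \ref{lem:B.4} is integrable, uniformly in $s$ near $s_0$. Together with $C_{loc}$ convergence of $v_s$ from Lemma \ref{lem2.2}, dominated convergence applies. Moreover $V(\mu_sx+x_s)\to\inf V$ pointwise by Proposition \ref{lem:4.2}, continuity of $V$, boundedness of $x_s$ (shown in that proof), and $\mu_s\to0$. Hence $Q_{s_0}\in L^2$, and $s\int Vu_s^2=\mu_s^{2s}\big(s_0\inf V\int Q_{s_0}^2+o(1)\big)$.

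The main obstacle is the gradient term, which must be shown to be $o(\mu_s^{2s})$. I would use the translated Pohozaev identity, centred at $x_s$, to get $\frac12\int((x-x_s)\cdot\nabla V)u_s^2$. In rescaled variables this equals
\begin{equation*}
\tfrac12\mu_s^{2s}(1+o(1))\int (\mu_s y\cdot\nabla V(\mu_sy+x_s))\,v_s^2\,dy .
\end{equation*}
The integrand tends to $0$ pointwise because $\nabla V$ is continuous, hence locally bounded near the bounded set of $x_s$, and $\mu_s\to0$. It is also uniformly dominated by $C(1+|y|)\,|\nabla V(\mu_sy+x_s)|\,\mu_s\,v_s^2$. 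For $|\mu_sy|$ large, $(V2)$ bounds $|(\mu_sy+x_s)\cdot\nabla V|$, so $|\mu_s y\cdot\nabla V|\le C(1+|x_s|\,|\nabla V|)$, and on bounded regions $\nabla V$ is bounded. Thus the integrand is dominated by $C\,v_s^2\le C(1+|y|)^{-2(n-2s)}\in L^1$, and dominated convergence gives $o(1)$. This yields
\begin{equation*}
(s-s_0)\|Q_{s_0}\|_p^p(1+o(1))=\mu_s^{2s}\big(s_0\inf V\|Q_{s_0}\|_2^2+o(1)\big).
\end{equation*}
Since $\mu_s^{-2s}=\|u_s\|_\infty^{p-2}$, the conclusion follows with
\begin{equation*}
A_{n,s_0}=\frac{s_0\|Q_{s_0}\|_2^2}{\|Q_{s_0}\|_p^p}.
\end{equation*}
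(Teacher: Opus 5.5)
Your reduction to $(s-s_0)\int u_s^p=s\int Vu_s^2+\frac12\int (x\cdot\nabla V)u_s^2$ and your treatment of $\int Vu_s^2$ are correct. The step that disposes of the gradient term, however, fails under the stated hypotheses. After translating to $x_s$ you dominate $\mu_s y\cdot\nabla V(\mu_s y+x_s)\,v_s^2$ by $Cv_s^2$. The bound you actually obtain is $C\bigl(1+|x_s|\,|\nabla V(\mu_s y+x_s)|\bigr)$, which still contains $\nabla V$ evaluated at arbitrarily distant points. But $(V1)$--$(V2)$ control only the radial quantity $x\cdot\nabla V$, not $\nabla V$ itself. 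For example, a bounded $C^1$ potential tending to $V_\infty$ can carry far-away bumps that are long in the radial direction and very thin in the angular directions; then $x\cdot\nabla V$ stays bounded while $|\nabla V|$ is unbounded.

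The same defect affects your justification of the translated identity via \eqref{peq}. For $u_s(\cdot+x_s)$ the relevant quantity is $x\cdot\nabla_x[V(x+x_s)]=(x+x_s)\cdot\nabla V(x+x_s)-x_s\cdot\nabla V(x+x_s)$, which need not be bounded. As written, your argument therefore needs the extra assumption $\nabla V\in L^\infty(\mathbb{R}^n)$.

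The missing observation is that no translation is needed; keep the origin-centred identity you already derived. In rescaled variables its gradient term is $\frac12\mu_s^{2s}(1+o(1))\int g(\mu_s y+x_s)\,v_s^2\,dy$ with $g(x)=x\cdot\nabla V(x)$. Since $|g|\le C$ by $(V2)$, Lemma \ref{lem:B.4} supplies the dominating function $C(1+|y|)^{-2(n-2s)}$ using only the stated hypotheses. Along any sequence with $x_{s_k}\to x_0$, the pointwise limit is $g(x_0)Q_{s_0}^2$. Moreover $g(x_0)=0$: $x_0$ is a global minimum point of $V\in C^1$ (proof of Proposition \ref{lem:4.2}), so $\nabla V(x_0)=0$. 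This is how the paper argues.

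With this replacement the rest of your proof goes through. Your constant $A_{n,s_0}=s_0\|Q_{s_0}\|_2^2/\|Q_{s_0}\|_p^p$ is the one the correctly normalized identity yields. The paper's \eqref{br-1} omits a factor $s/n$ on the right-hand side, which changes only the value of $A_{n,s_0}$. One small slip: $n-2>n/2$ fails for $n=4$; use $n-2s_0>n-2\ge n/2$ instead.
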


\begin{proof}

By the Pohozaev identity \eqref{peq},   we have
\begin{equation}\label{br-1}
\begin{split}
\Big(\frac{1}{p}-\frac1{2_s^*}\Big)\int_{\mathbb{R}^n}u_s^p(x)\,dx
=\int_{\mathbb{R}^n}[V(x)+\frac1{2s}x\cdot\nabla V(x)]u_s^2\,dx.
\end{split}
\end{equation}

 By Lemma \ref{lem2.2},  the left hand side of \eqref{br-1} can be written as
\begin{equation*}
\begin{split}
\Big(\frac{1}{p}-\frac1{2_s^*}\Big)\int_{\mathbb{R}^n}u_s^p(x)\,dx
&=\frac{s-s_0}{n}\mu_s^{\frac{2p}{p-2}(s_0-s)}\int_{\mathbb{R}^n}|v_s|^p(x)\,dx\\
&=\frac{s-s_0}{n}[ \int_{\mathbb{R}^n}|Q_{s_0}|^p(x)\,dx+o(1)]
\end{split}
\end{equation*}
as $s\downarrow s_0$.

On the other hand,
by Lemma \ref{lem2.2} and Proposition \ref{lem:4.2}, we may assume for any sequence $\{s_k\}$,  $s_k\downarrow s_0$, correspondingly $x_{s_k}\to x_0\in \mathbb{R}^n$ , we have $V(x_0)=\inf_{x\in \mathbb{R}^n}V(x)$   and $v_{s_k}\to Q_{s_0}$.

Since $n\geq 4$, Lemma \ref{lem2.2}, Lemma \ref{lem:B.4} and the Lebesgue dominated convergence theorem enable us to infer that
\begin{equation}\label{br-3}
\begin{split}
&\int_{\mathbb{R}^n}[V(x)+\frac1{2s_k}x\cdot\nabla V(x)]u_{s_k}^2\,dx\\
&=\mu_{s_k}^{2s_k}\mu_{s_k}^{-\frac{2p}{p-2}(s_k-s_0)}
\int_{\mathbb{R}^n}[V(x_{s_k}+\mu_{s_k}x)+\frac1{2s_k}(x_{s_k}+\mu_{s_k}x)\cdot\nabla V(x_{s_k}+\mu_{s_k}x)]v_{s_k}^2\,dx\\
&=\mu_{s_k}^{2s_k}\inf_{x\in \mathbb{R}^n}V(x)\Big[\int_{\mathbb{R}^n}Q_{s_0}^2\,dx+o(1)\Big]
\end{split}
\end{equation}
as $s\downarrow s_0$.
It follows from \eqref{br-1}--\eqref{br-3} that
\begin{equation*}
\begin{split}
\lim_{k\to\infty}(s_k-s_0)\mu_{s_k}^{-2s_k}
=n \inf_{x\in \mathbb{R}^n}V(x)\Big(\int_{\mathbb{R}^n}Q_{s_0}^p\,dx\Big)^{-1}\int_{\mathbb{R}^n}Q_{s_0}^2\,dx.
\end{split}
\end{equation*}
The assertion follows by the definition of $\mu_s$ and Lemma \ref{lem2.2}. The proof is complete.
\end{proof}

\bigskip

At the end of this section, we prove Theorem \ref{thm:1.1} and Corollary \ref{cor:1.1a}.
\bigskip

{\bf Proof of Theorem \ref{thm:1.1}}
By Lemma \ref{lem2.2}, we have $v_s\to Q_{s_0}$ in $L^p(\mathbb{R}^n)$  and $C_{loc}(\mathbb{R}^n)$ as $s\to s_0$. So Lemma \ref{lem:B.4} implies that $v_s\to Q_{s_0}$ uniformly in $\mathbb{R}^n$ as $s\to s_0$.
 The  proof of Theorem \ref{thm:1.1} then is completed by  Proposition \ref{lem:4.2}, Lemma \ref{lem:B.4} and  Lemma \ref{blowup rate}.\quad$\Box$

 \bigskip

{\bf Proof of Corollary \ref{cor:1.1a}}
By Theorem \ref{thm:1.1}, for any sequence $\{s_k\}$ such that  $s_k\downarrow s_0$, we have correspondingly   sequences $\{x_{s_k}\},\{v_{s_k}\}$ such that $x_{s_k}\to x_0\in \mathbb{R}^n$,   $v_{s_k}\to Q_{s_0}$ in  $L^p(\mathbb{R}^n)$, $\mu_{s_k}^{s_k-s_0}\to 1$  and $V(x_0)=\inf_{x\in \mathbb{R}^n}V(x)$.
Hence, for any $\varphi\in C_0^\infty(\mathbb{R}^n)$,  we get by \eqref{Qs0},
\begin{equation*}
\begin{split}
\lim_{k\to\infty}\int_{\mathbb{R}^n}|u_{s_k}|^p\varphi\,dx
&=\lim_{k\to\infty}\mu_{s_k}^{\frac{2p}{p-2}(s_0-{s_k})}\int_{\mathbb{R}^n}|v_{s_k}|^p(x)\varphi(\mu_{s_k}x+x_{s_k})\,dx\\
&=\varphi(x_0)\int_{\mathbb{R}^n}|Q_{s_0}|^p\,dx\\
&=S(n,s_0)^{\frac p{p-2}}\varphi(x_0).
\end{split}
\end{equation*}
The proof is completed.
\quad$\Box$

\bigskip

\bigskip

\section{Local uniqueness of ground state solutions}

\bigskip

In this section,  we devote to the proof of Theorem \ref{thm:1.3}. Under assumptions $(V1)-(V3)$, we first drive refined asymptotic behaviour of $x_s$.

\begin{lemma}\label{refined location}
Suppose $n\geq5$ and  $(V1)-(V3)$. Then
\begin{equation*}
\lim_{s\downarrow s_0}\frac{x_s-x_0}{\mu_s}=0.
\end{equation*}
\end{lemma}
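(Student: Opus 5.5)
Write $z_s:=(x_s-x_0)/\mu_s$. By Proposition \ref{lem:4.2} and the uniqueness of the minimum point in $(V3)$, $x_s\to x_0$. The plan is to first show $\{z_s\}$ is bounded and then that every limit point of $z_s$ is $0$. The main tool will be a translational (local) Pohozaev identity, which is the natural source of information on $\nabla V$ near the peak.

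\textbf{Step 1: the identity.} Multiply \eqref{eq:1.1} by $\partial_{x_j}u_s$ and integrate. Equivalently, differentiate $E_{V,s}$ along translations: since $u_s$ minimizes $S_V(n,s)$, the derivative of $t\mapsto E_{V,s}(u_s(\cdot+te_j))$ at $t=0$ vanishes. The kinetic and $L^p$ terms are translation invariant, so we get
\begin{equation*}
\int_{\mathbb{R}^n}\partial_jV(x)\,u_s^2(x)\,dx=0,\qquad j=1,\dots,n .
\end{equation*}
This is justified by $(V1)$, $u_s\in C^{1,\beta}$, and the decay in Lemma \ref{lem:B.3}.

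\textbf{Step 2: rescale and split.} Set $x=\mu_sy+x_s$, so that $x-x_0=\mu_s(y+z_s)$, and get
\begin{equation*}
\int_{\mathbb{R}^n}\nabla V(\mu_s y+x_s)\,v_s^2(y)\,dy=0 .
\end{equation*}
Split the domain into $|\mu_s y+x_s-x_0|<\tau$ and its complement. On the complement, $\nabla V$ is bounded, because $(V2)$ together with $(V1)$ controls $|\nabla V|$ at infinity and $\nabla V$ is continuous on compacts. Lemma \ref{lem:B.4} gives $v_s\le C(1+|y|)^{-(n-2s)}$, and this region has $|y|\gtrsim\tau/\mu_s$. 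So this part is $O(\mu_s^{n-4s})$. This is integrable because $n\ge5>4s_0$ makes $v_s^2$ integrable for $s$ near $s_0$. On the inner region, use $(V3)$ to write
\begin{equation*}
\nabla V(\mu_sy+x_s)=mA\mu_s^{m-1}|y+z_s|^{m-2}(y+z_s)+O(\mu_s^m|y+z_s|^m).
\end{equation*}
Dividing by $\mu_s^{m-1}$ gives
\begin{equation*}
\int_{\{|y+z_s|<\tau/\mu_s\}}|y+z_s|^{m-2}(y+z_s)\,v_s^2\,dy = O(\mu_s)+O(\mu_s^{n-4s-m+1}).
\end{equation*}
Here one must check that $\int|y|^{m}v_s^2<\infty$ uniformly. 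This is exactly where $m<n-4s_0$ is used, since $|y|^{m}(1+|y|)^{-2(n-2s)}$ is integrable when $m<n-4s$. Both error terms tend to $0$.

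\textbf{Step 3: boundedness and limit of $z_s$.} Suppose $|z_{s_k}|\to\infty$. Dot the vector identity with $z_{s_k}/|z_{s_k}|$. For $y$ in a fixed large ball, where most of the mass of $v_s^2\to Q_{s_0}^2$ sits, we have $(y+z)\cdot z/|z|\ge |z|/2$. The contribution from $|y|\ge|z|/2$ is controlled using the decay bound, giving a contradiction with the right side tending to $0$. This tail control is the main obstacle: one must bound $\int_{|y|>|z|/2}|y+z|^{m-1}v_s^2$ by $o(|z|^{m-1})$ times a quantity smaller than the main term, and the uniform decay $(1+|y|)^{-(n-2s)}$ together with $m<n-4s_0$ should suffice. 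So $z_s$ is bounded. Along a subsequence $z_{s_k}\to z_0$. By the uniform convergence in Theorem \ref{thm:1.1}(i) and dominated convergence with the decay bound,
\begin{equation*}
\int_{\mathbb{R}^n}|y+z_0|^{m-2}(y+z_0)\,Q_{s_0}^2(y)\,dy=0 .
\end{equation*}
Since $Q_{s_0}$ is radial and strictly decreasing, $F(z)=\int|y+z|^mQ_{s_0}^2(y)\,dy$ is strictly convex, even, and has gradient given by $m$ times the left side. Hence its only critical point is $z=0$, so $z_0=0$. Since the subsequence was arbitrary, $(x_s-x_0)/\mu_s\to0$.
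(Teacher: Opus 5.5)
The weak point is Steps 1--2, not Step 3.

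\textbf{The gap.} Deriving $\int_{\mathbb{R}^n}\partial_jV\,u_s^2\,dx=0$ means differentiating $t\mapsto\int V(x-te_j)u_s^2(x)\,dx$ under the integral. Bounding the contribution of $\{|\mu_sy+x_s-x_0|\ge\tau\}$ by $O(\mu_s^{n-4s})$ likewise needs control of $\nabla V$ on all of $\mathbb{R}^n\setminus B_\tau(x_0)$. Your justification, that $(V1)$ and $(V2)$ control $|\nabla V|$ at infinity, is incorrect.

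$(V2)$ only bounds the radial derivative $x\cdot\nabla V$. Tangential derivatives can blow up while $V\to V_\infty$ and $x\cdot\nabla V$ stays bounded. For example, take disjoint downward bumps of height $1/k$ near $|x|=2^k$. Let each profile be a function of $k(|x|-2^k)/2^k$ times a function of $x_2/|x|$ of width $4^{-k}$. The $0$-homogeneous angular factor is annihilated by $x\cdot\nabla$, yet $|\nabla V|\sim 2^k/k$. Also, $(V3)$ only concerns $B_\tau(x_0)$.

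So under the stated hypotheses neither the identity of Step 1 nor the far-field estimate of Step 2 is justified. Your argument silently adds an assumption such as $\nabla V\in L^\infty(\mathbb{R}^n)$.

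Granting that assumption, the rest works. The inner error is $O(\mu_s(1+|z_s|^m))$ rather than $O(\mu_s)$, but this is $o(|z_s|^{m-1})$ since $\mu_s|z_s|=|x_s-x_0|\to0$. The tail you worry about in Step 3 satisfies $\int_{|y|\ge|z|/2}|y+z|^{m-1}v_s^2\le C|z|^{m-1-(n-4s)}=o(|z|^{m-1})$.

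\textbf{The fix.} Do not differentiate; this is the paper's route. Comparing $u_s$ with its translate $u_s(\cdot+x_s-x_0)$ gives $\int V(\mu_sy+x_s)v_s^2\,dy\le\int V(\mu_sy+x_0)v_s^2\,dy$.

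\emph{Right side.} Minus $V(x_0)\int v_s^2$, it equals $A\mu_s^m\int|y|^mQ_{s_0}^2\,dy+o(\mu_s^m)$. This uses only the expansion of $V$ itself in $(V3)$, the bound $V\le V_\infty$ away from $x_0$, and Lemma \ref{lem:B.4}. Your $O(\mu_s^{n-4s})=o(\mu_s^m)$ far-field estimate is now legitimate, because it only needs $V$ bounded.

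\emph{Left side.} Minus the same quantity, it equals $\mu_s^m\int\frac{V(\mu_sy+x_s)-V(x_0)}{|\mu_sy+x_s-x_0|^m}|y+z_s|^mv_s^2\,dy$, with a nonnegative integrand. Fatou's lemma then rules out $|z_s|\to\infty$. Along a subsequence $z_s\to z_0$, it gives $F(z_0)\le F(0)$ for your $F(z)=\int|y+z|^mQ_{s_0}^2\,dy$. Strict convexity and evenness of $F$ force $z_0=0$.

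\textbf{Comparison.} Your first-order route concludes $\nabla F(z_0)=0$; the paper's zeroth-order comparison concludes $F(z_0)\le F(0)$. The comparison needs no information on $\nabla V$ at all, not even the gradient part of $(V3)$. It also replaces your dot-product and tail analysis in Step 3 with a single application of Fatou.
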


\begin{proof}
By \eqref{location-1} and \eqref{location-2}, we have
\begin{equation}\label{refined location-1}
\int_{\mathbb{R}^n}V(\mu_sx+x_s)v_s^2(x)\,dx\leq \int_{\mathbb{R}^n}V(\mu_sx+x_0)v_s^2(x)\,dx.
\end{equation}

By the assumption $(V3)$, Lemmas \ref{lem2.2} and \ref{lem:B.4}, we deduce
\begin{equation}\label{refined location-2a}
\begin{split}
\int_{\mathbb{R}^n}[V(\mu_sx+x_0)-V(x_0)]v_s^2(x)\,dx
&= \int_{\mathbb{R}^n}\frac{V(\mu_sx+x_0)-V(x_0)}{|\mu_sx|^m}|\mu_sx|^mv_s^2(x)\,dx\\
&=A\mu_s^m \int_{\mathbb{R}^n}|x|^mQ_{s_0}^2(x)\,dx+o(\mu_s^m)
\end{split}
\end{equation}
as $s\downarrow s_0$ .
Equations \eqref{refined location-1} and \eqref{refined location-2a} then yield
\begin{equation}\label{refined location-2b}
\begin{split}
\int_{\mathbb{R}^n}[V(\mu_sx+x_s)-V(x_0)]v_s^2(x)\,dx
\leq A\mu_s^m \int_{\mathbb{R}^n}|x|^mQ_{s_0}^2(x)\,dx+o(\mu_s^m).
\end{split}
\end{equation}
On the other hand,
\begin{equation}\label{refined location-2}
\begin{split}
\int_{\mathbb{R}^n}[V(\mu_sx+x_s)-V(x_0)]v_s^2(x)\,dx
&=\mu_s^m\int_{\mathbb{R}^n}\frac{V(\mu_sx+x_s)-V(x_0)}{|\mu_sx+x_s-x_0|^m}{\big|x+\frac{x_s-x_0}{\mu_s}\big|^m}v_s^2(x)\,dx.
\end{split}
\end{equation}

Now, we claim that $\big|\frac{x_s-x_0}{\mu_s}\big|\leq C$.
On the contrary,  there would exist $\{s_k\}$, $s_k\downarrow s_0$, such that $\big|\frac{x_{s_k}-x_0}{\mu_{s_k}}\big|\to +\infty$. It follows from \eqref{refined location-2} and the Fatou lemma that, for any $C>0$ and $k$ large enough,
\begin{equation*}
\begin{split}
\int_{\mathbb{R}^n}[V(\mu_{s_k}x+x_{s_k})-V(x_0)]v_{s_k}^2(x)\,dx \geq C \mu_{s_k}^m,
\end{split}
\end{equation*}
which contradicts to \eqref{refined location-2b}. Hence,  $\big|\frac{x_{s}-x_0}{\mu_{s}}\big|\leq C$. So we may assume that for any $\{s_k\}$, $s_k\downarrow s_0$,   $\frac{x_{s_k}-x_0}{\mu_{s_k}}\to y_0\in \mathbb{R}^n$. Equation \eqref{refined location-2} then yields that
\begin{equation}\label{refined location-3}
\begin{split}
\int_{\mathbb{R}^n}[V(\mu_{s_k}x+x_s)-V(x_0)]v_{s_k}^2(x)\,dx = A\mu_{s_k}^m \int_{\mathbb{R}^n}|x+y_0|^mQ_{s_0}^2(x)\,dx+o(\mu_{s_k}^m).
\end{split}
\end{equation}
We see from  \eqref{refined location-2b} and \eqref{refined location-3} that $y_0=0$. The conclusion readily follows.
\end{proof}

\bigskip

From Theorem \ref{thm:1.1} and Lemma \ref{refined location}, we  immediately obtain the following  result.
\begin{proposition}\label{cor:7.1}Let $n\geq 5$,  $s_0<s<1$ and $2<p<2^*$.  Suppose  $(V1)-(V3)$. Denote
\begin{equation}\label{eta}
\eta_s=[A_{n,s_0}V(x_0)]^{-1/(2s)}(s-s_0)^{1/(2s)}.
\end{equation}
 Then
\begin{equation}\label{cor:7.1-1}
\lim_{s\downarrow s_0}\eta_s^{\alpha_s}u_s(\eta_s x+x_0)= Q_{s_0} \quad {\rm uniformly\, in } \quad x\in\mathbb{R}^n.
\end{equation}
Moreover, for $s>s_0$ and close to $s_0$,
\begin{equation*}
\eta_s^{{\alpha_s}}u_s(\eta_s x+x_0)\leq C(1+|x|^2)^{-(n-2s)},
\end{equation*}
where $C>0$ is independent of $s$.
\end{proposition}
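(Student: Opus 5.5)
The plan is to reduce everything to the function $v_s$ of \eqref{scaling1}, for which Theorem \ref{thm:1.1} and Lemma \ref{lem:B.4} already give uniform convergence and uniform decay. Set $w_s(x):=\eta_s^{\alpha_s}u_s(\eta_s x+x_0)$. Since $u_s(z)=\mu_s^{-\alpha_s}v_s\big((z-x_s)/\mu_s\big)$, we have
\begin{equation*}
w_s(x)=\Big(\frac{\eta_s}{\mu_s}\Big)^{\alpha_s}v_s\Big(\frac{\eta_s}{\mu_s}x+\frac{x_0-x_s}{\mu_s}\Big).
\end{equation*}
The first step is to control the two parameters in this formula. By Lemma \ref{blowup rate} (that is, Theorem \ref{thm:1.1}(iii)) with $\inf V=V(x_0)$, we get $(s-s_0)\mu_s^{-2s}\to A_{n,s_0}V(x_0)$. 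By \eqref{eta} this means exactly $(\eta_s/\mu_s)^{2s}\to 1$, so $\lambda_s:=\eta_s/\mu_s\to1$ and $\lambda_s^{\alpha_s}\to1$. By Lemma \ref{refined location}, $z_s:=(x_0-x_s)/\mu_s\to0$.

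For the uniform convergence \eqref{cor:7.1-1}, I split
\begin{equation*}
|w_s-Q_{s_0}|\le \lambda_s^{\alpha_s}\,\|v_s-Q_{s_0}\|_\infty+\lambda_s^{\alpha_s}\,|Q_{s_0}(\lambda_s x+z_s)-Q_{s_0}(x)|+|\lambda_s^{\alpha_s}-1|\,Q_{s_0}(x).
\end{equation*}
The first term tends to $0$ by Theorem \ref{thm:1.1}(i). The third tends to $0$ because $Q_{s_0}\le 1$. For the middle term I use that $Q_{s_0}$ is uniformly continuous and tends to $0$ at infinity. On a large ball $B_R$, the points $\lambda_s x+z_s$ converge uniformly to $x$, so the term is small there. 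On $B_R^c$, both $Q_{s_0}(x)$ and $Q_{s_0}(\lambda_s x+z_s)$ are small, since $|\lambda_s x+z_s|\ge R/2$ for $s$ near $s_0$.

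For the decay bound, Lemma \ref{lem:B.4} gives $v_s(y)\le C(1+|y|)^{-(n-2s)}$ with $C$ independent of $s$. Take $s$ close to $s_0$ so that $\lambda_s\in[\tfrac12,2]$ and $|z_s|\le1$. Then $1+|\lambda_s x+z_s|\ge \tfrac14(1+|x|)$, which yields
\begin{equation*}
w_s(x)\le C'(1+|x|)^{-(n-2s)}\le C''(1+|x|^2)^{-\frac{n-2s}{2}},
\end{equation*}
with constants independent of $s$.

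The main obstacle is upgrading this to the exponent $-(n-2s)$ on $(1+|x|^2)$ as written. On any fixed ball $|x|\le R$ the two bounds are equivalent, since $w_s\le 2^{\alpha_s}$ there. The difficulty is entirely at infinity, where the written form demands decay of order $|x|^{-2(n-2s)}$. My first attempt would be to redo the Kelvin-transform argument of Lemma \ref{lem:B.4} directly for $w_s$, aiming at a bound for $|x|^{2(n-2s)}w_s$. However, I expect this to fail, and the statement seems unprovable as written. Uniform convergence to $Q_{s_0}$, which behaves like $|x|^{-(n-2s_0)}$ at infinity, is incompatible with a uniform $s$-independent bound of order $|x|^{-2(n-2s)}$ once $s$ is near $s_0$ and $|x|$ is large. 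So the exponent is most likely a typo for the power $-(n-2s)/2$, i.e.\ the bound $C(1+|x|)^{-(n-2s)}$ of Theorem \ref{thm:1.1}(iv). That is the form the argument above establishes, and the form used later in \eqref{decay234}.
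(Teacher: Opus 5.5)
Your proof is correct and follows the paper's route; the paper only says the proposition follows immediately from Theorem~\ref{thm:1.1} and Lemma~\ref{refined location}. You write $\eta_s^{\alpha_s}u_s(\eta_s x+x_0)=(\eta_s/\mu_s)^{\alpha_s}v_s\big((\eta_s/\mu_s)x+(x_0-x_s)/\mu_s\big)$, use the blow-up rate for $\eta_s/\mu_s\to1$ and the refined location for $(x_0-x_s)/\mu_s\to0$, and then carry over the uniform convergence and uniform decay of $v_s$, with the details filled in correctly. Your reading of the decay exponent is also right, and the displayed bound is actually false: letting $s\downarrow s_0$ pointwise would give $Q_{s_0}(x)\le C(1+|x|^2)^{-(n-2s_0)}$, which contradicts the $|x|^{-(n-2s_0)}$ decay of $Q_{s_0}$, so the intended bound is $C(1+|x|^2)^{-\frac{n-2s}{2}}$ (Theorem~\ref{thm:1.1}(iv)), which is what is used later in \eqref{decay3-56}.
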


Finally we  prove Theorem \ref{thm:1.3}.  We  commence with  the following Lemma.

\begin{lemma}\label{decay3}
Let $z_s\in H^s(\mathbb{R}^n)$ satisfy
\begin{equation*}
(-\Delta)^sz_s\leq C(1+|x|)^{-(n-2s)(p-2)}z_s.
\end{equation*}
Assume $0\leq z_s\leq 1$ and $\|z_s\|_{2_s^*}\leq C$, $C>0$ is independent of $s$. Then there exist $\delta_3>0$ and $C>0$, independent of $s$, such that if $s_0\leq s<s_0+\delta_3$, then
\begin{equation*}
z_s(x)\leq C(1+|x|)^{-(n-2s)},\quad |x|>0.
\end{equation*}
\end{lemma}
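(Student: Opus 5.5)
Since $0\le z_s\le 1$, it suffices to show $z_s(x)\le C|x|^{-(n-2s)}$ for $|x|\ge 1$. I would mimic the proof of Lemma \ref{lem:B.4} and pass to the Kelvin transform
\[
\bar z_s(x)=|x|^{-(n-2s)}z_s\Big(\frac{x}{|x|^2}\Big).
\]
The claim then becomes a uniform bound $\|\bar z_s\|_{L^\infty(B_1)}\le C$; outside $B_1$ we already have $\bar z_s\le |x|^{-(n-2s)}\le 1$. The transformation rule $(-\Delta)^s\bar z_s(x)=|x|^{-n-2s}\big((-\Delta)^sz_s\big)(x/|x|^2)$ gives
\[
(-\Delta)^s\bar z_s\le \bar a(x)\bar z_s,\qquad \bar a(x)=C|x|^{-4s}\Big(1+\frac1{|x|}\Big)^{-(n-2s)(p-2)}.
\]
Near the origin this behaves like $|x|^{(n-2s)(p-2)-4s}$, and away from the origin like $|x|^{-4s}$. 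Moreover, $\|\bar z_s\|_{2_s^*}=\|z_s\|_{2_s^*}\le C$.

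The key step is an integrability bound for $\bar a$ that is uniform in $s$. At $s=s_0$ we have $(n-2s_0)(p-2)=4s_0$, since $p=2^*_{s_0}$. Hence for $s$ close to $s_0$ the exponent $(n-2s)(p-2)-4s$ equals $-(4+2(p-2))(s-s_0)$, which is a small negative number. Therefore $\bar a\le C|x|^{-\epsilon_s}$ on $B_1$ with $\epsilon_s\to0$, and $\bar a\le C|x|^{-4s}$ on $B_1^c$.

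I would choose $t$ slightly larger than $n/(2s_0)$ and, if possible, with $4st>n$. Then $\int_{B_1^c}\bar a^t<\infty$ uniformly, and $\int_{B_1}\bar a^t\le C\int_{B_1}|x|^{-t\epsilon_s}<C$ once $\delta_3$ is small enough that $t\epsilon_s<n/2$. The condition $4s_0t>n$ holds because $t>n/(2s_0)$ already gives $4s_0t>2n$. So $\|\bar a\|_{L^t(\mathbb{R}^n)}\le C_a$ with $C_a$ independent of $s\in[s_0,s_0+\delta_3)$.

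Lemma \ref{lem:A.3} then applies with $t>n/(2s_0)$ and yields $\|\bar z_s\|_\infty\le C\|\bar z_s\|_{2_s^*}\le C$ uniformly. Undoing the Kelvin transform gives $z_s(x)\le C|x|^{-(n-2s)}$, and together with $z_s\le1$ this gives the stated bound.

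The main obstacle is the rigour of the Kelvin-transform step. First, $\bar z_s$ must belong to $H^s$, or at least be an admissible weak subsolution for the Moser iteration in Lemma \ref{lem:A.3}. Second, the singularity at the origin must be controlled. For the first point I would instead apply the iteration of Lemma \ref{lem:A.3} to $\varphi_{q,L}(\bar z_s)$ with cutoffs excluding a small ball $B_\varepsilon$, and then let $\varepsilon\to0$. This works because $\bar a\in L^t$ near $0$ and the capacity of a point vanishes for $2s<n$. If the borderline case $s=s_0$ (where $\epsilon_s=0$) causes trouble, that case is actually easier, since then $\bar a$ is bounded near $0$.
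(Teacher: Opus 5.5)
Your proposal is correct and follows the paper's proof essentially step for step. You Kelvin-transform $z_s$, bound the resulting coefficient $C|x|^{(p-2)(n-2s)-4s}(1+|x|)^{-(n-2s)(p-2)}$ uniformly in $L^q$ for a fixed $q>n/(2s_0)$ using $(n-2s_0)(p-2)=4s_0$, and then apply Lemma \ref{lem:A.3} together with $\|\bar z_s\|_{2_s^*}=\|z_s\|_{2_s^*}$. Your extra care about whether the Kelvin transform is an admissible subsolution near the origin covers a point the paper passes over silently, but it does not change the argument.
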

\begin{proof}

By the Kelvin transformation,
\[
Z_s(x)=\frac1{|x|^{n-2s}}z_s\big(\frac{x}{|x|^2}\big)
\]
satisfies
\begin{equation*}
(-\Delta)^sZ_s(x)\leq a(x)Z_s(x),
\end{equation*}
where
\begin{equation*}
a(x)=C|x|^{(p-2)(n-2s)-4s}(1+|x|)^{-(n-2s)(p-2)}\leq C\min\{|x|^{[(p-2)(n-2s)-4s]}, |x|^{-4s}\}.
\end{equation*}

Fix $q>\frac n{2s_0}$.
It is easy to see that  $a$ is  uniformly bounded in $L^q(\mathbb{R}^n)$.

By Lemma \ref{lem:A.3},
\begin{equation*}
\begin{split}
\|Z_s\|_\infty\leq C\|Z_s\|_{2_s^*}=C\|z_s\|_{2_s^*}\leq C.
\end{split}
\end{equation*}
As a result,
\begin{equation*}
z_s(x)\leq C|x|^{-(n-2s)},\quad |x|>0.
\end{equation*}
The proof is complete since  $0<z_s\leq 1$.
\end{proof}

\bigskip

{\bf Proof of Theorem \ref{thm:1.3}.} \ \ Suppose $u_{s,1}$ and $u_{s,2}$ are two ground state solutions of \eqref{eq:1.1}. We will show that there exists $\delta_3>0$ such that if $s_0<s<s_0+\delta_3$, then $u_{s,1}\equiv u_{s,2}$.

Suppose on the contrary that there exists a sequence $\{s_k\}$,  $s_k\downarrow s_0$ as $k\to\infty$, such that $u_{s_k,1}\neq u_{s_k,2}$ for $k$ large enough. Set
\begin{equation*}
w_k(x)=v_{k,1}-v_{k,2},
\end{equation*}
where $v_{k,i}=\eta_{s_k}^{\alpha_{s_k}}u_{s_k,i}(\eta_{s_k}x+x_0)$, $i=1,2$, satisfying
\begin{equation}\label{thm:1.3-1a}
(-\Delta)^{s_k}v_{k,i}+\eta_{s_k}^{2s_k}V(\eta_{s_k}x+x_0)v_{k,i}=v_{k,i}^{p-1}
\end{equation}
and $\eta_{s_k}$ is defined in \eqref{eta}.

 Since $w_k$ are not equal to zero for $k$ large enough, we define  $W_k=\frac{w_k}{\|w_k\|_\infty}$. Then $\|W_k\|_{L^\infty(\mathbb{R}^n)}=1$ and  $W_k$ satisfies the following equation
\begin{equation}\label{thm:1.3-2}
\begin{split}
(-\Delta)^{s_k}W_k+\eta_{s_k}^{2{s_k}}V(\eta_{s_k}x+x_0)W_k
=\frac{v_{k,1}^{p-1}-v_{k,2}^{p-1}}{v_{k,1}-v_{k,2}}{w}_k.
\end{split}
\end{equation}
By Proposition \ref{cor:7.1}, we have
\begin{equation}\label{decay3-56}
\begin{split}
\Big|\frac{v_{k,1}^{p-1}-v_{k,2}^{p-1}}{v_{k,1}-v_{k,2}}\Big|
&=\Bigg|\frac{(p-1)\int_0^1[tv_{k,1}+(1-t)v_{k,2}]^{p-2}\,dt}{v_{k,1}-v_{k,2}}\Bigg|\\
&\leq (p-1)2^{p-2}(v_{k,1}^{p-2}+v_{k,2}^{p-2})\\
&\leq C(1+|x|)^{-(p-2)(n-2s)}.
\end{split}
\end{equation}

The Kato inequality(see \cite{FLS})
\[
(-\Delta)^s|u|\leq sgn(u)(-\Delta)^su,
\]
and \eqref{thm:1.3-2}, \eqref{decay3-56} yield
\begin{equation*}
\begin{split}
(-\Delta)^{s_k}|W_k|
\leq sgn(W_k)(-\Delta)^{s_k}W_k
\leq \frac{v_{k,1}^{p-1}-v_{k,2}^{p-1}}{v_{k,1}-v_{k,2}}|W_k|
\leq C(1+|x|)^{-(p-2)(n-2s)}|W_k|.
\end{split}
\end{equation*}
Lemma \ref{decay3} then implies that
\begin{equation}\label{thm:1.3-3}
|W_k|\leq C(1+|x|^2)^{-\frac{n-2s}{2}}.
\end{equation}
Whence by Lemma \ref{schauder} and $|W_k|\leq 1$, there exists $W_0\in C^{0,\beta}(\mathbb{R}^n)$ and a subsequence of $\{W_k\}$ such that $W_k\to W_0$ in $C_{loc}^{0,\beta}(\mathbb{R}^n)$,
and  $W_0$ satisfies
\begin{equation*}
(-\Delta)^{s_0}W_0=(2_{s_0}^*-1)Q_{s_0}^{2_{s_0}^*-2}W_0.
\end{equation*}
It follows from Theorem 1.1 in \cite{DPS} that, there exist $\lambda_1, \lambda_2,\cdots, \lambda_{n+1}$ such that
\begin{equation}\label{thm:1.3-5}
W_0=\sum_{j=1}^n\lambda_j\partial_{x_j}Q_{s_0}+\lambda_{n+1}\big(\frac{n-2s_0}{2}Q_{s_0}+x\cdot \nabla Q_{s_0}\big).
\end{equation}
So  \eqref{thm:1.3-3} and \eqref{thm:1.3-5} yield  $W_k\to W_0$ uniformly in  $\mathbb{R}^n$ and $\|W_0\|_\infty=1$.

\bigskip

Next, we will prove that $\lambda_j=0$, $j=1,2,\cdots, n+1$.  Once this is true, we  obtain  $W_0=0$, which contradicts to $\|W_0\|_\infty=1$, and  the conclusion immediately follows.

By \eqref{thm:1.3-1a} and the Pohozaev identity \eqref{peq}, we have
\begin{equation*}
\frac{1}{2}\int_{\mathbb{R}^n}V(\eta_{s_k}x+x_0)v_{k,i}^2\,dx
+\frac{1}{2n}\eta_{s_k}\int_{\mathbb{R}^n}x\cdot \nabla V(\eta_{s_k}x+x_0)v_{k,i}^2\,dx
=\frac{s_k-s_0}{n}\eta_{s_k}^{-2s_k}\int_{\mathbb{R}^n}v_{k,i}^p\,dx.
\end{equation*}
Hence,
\begin{equation}\label{thm:1.3-8}
\begin{split}
&\frac{1}{2}\int_{\mathbb{R}^n}V(\eta_{s_k}x+x_0)(v_{k,1}+v_{k,1})(v_{k,1}-v_{k,1})\,dx\\
&=-\frac{1}{2n}\eta_{s_k}\int_{\mathbb{R}^n}x\cdot \nabla V(\eta_{s_k}x+x_0)(v_{k,1}+v_{k,1})(v_{k,1}-v_{k,1})\,dx\\
&+\frac{s_k-s_0}{n}\eta_{s_k}^{-2s_k}\int_{\mathbb{R}^n}\frac{v_{k,1}^p-v_{k,2}^p}
{v_{k,1}-v_{k,2}}(v_{k,1}-v_{k,2})\,dx.
\end{split}
\end{equation}
Letting $k\to\infty$ in \eqref{thm:1.3-8}, we find by \eqref{eta} --\eqref{cor:7.1-1} that
\begin{equation*}
\begin{split}
V(x_0)\int_{\mathbb{R}^n}Q_{s_0}W_0\,dx=\frac{pA_{n,s_0}}{n}\int_{\mathbb{R}^n}Q_{s_0}^{p-1}W_0\,dx.
\end{split}
\end{equation*}
We deduce by \eqref{thm:1.3-5}, $p=2_{s_0}^*$ and
$$
\int_{\mathbb{R}^n}Q_{s_0}\partial_{x_j}Q_{s_0}\,dx=\int_{\mathbb{R}^n}Q_{s_0}^{p-1}\partial_{x_j}Q_{s_0}\,dx=0
$$ that
\begin{equation}\label{thm:1.3-10}
\begin{split}
\lambda_{n+1}\Big[V(x_0)\int_{\mathbb{R}^n}Q_{s_0}(\frac{n-2s_0}{2}Q_{s_0}+x\cdot \nabla Q_{s_0})\,dx-\frac{pA_{n,s_0}}{n}\int_{\mathbb{R}^n}Q_{s_0}^{p-1}(\frac{n-2s_0}{2}Q_{s_0}+x\cdot \nabla Q_{s_0})\,dx\Big]=0.
\end{split}
\end{equation}
A integration by part implies
\begin{equation*}
\begin{split}
\int_{\mathbb{R}^n}Q_{s_0}(\frac{n-2s_0}{2}Q_{s_0}+x\cdot \nabla Q_{s_0})\,dx
=\int_{\mathbb{R}^n}\frac{n-2s_0}{2}Q_{s_0}^2+\frac12x\cdot \nabla Q_{s_0}^2\,dx=-s_0\int_{\mathbb{R}^n}Q_{s_0}^2\,dx<0.
\end{split}
\end{equation*}
Similarly,
\begin{equation}\label{thm:1.3-12}
\begin{split}
\int_{\mathbb{R}^n}Q_{s_0}^{p-1}(\frac{n-2s_0}{2}Q_{s_0}+x\cdot \nabla Q_{s_0})\,dx
&=\int_{\mathbb{R}^n}\frac{n-2s_0}{2}Q_{s_0}^p+\frac1px\cdot \nabla Q_{s_0}^p\,dx=0.
\end{split}
\end{equation}
It follows from \eqref{thm:1.3-10}--\eqref{thm:1.3-12} that $\lambda_{n+1}=0$.

Multiplying \eqref{thm:1.3-1a} by $v_{k,i}$ and integrating by parts, we have
\begin{equation*}
\int_{\mathbb{R}^n}\partial_{x_j}(V(\eta_{s_k}x+x_0))v_{k,i}^2\,dx=0,\quad i=1,2;\quad j=1,2,\cdots,n.
\end{equation*}

Therefore,
\begin{equation*}
\int_{\mathbb{R}^n}\partial_{x_j}(V(\eta_{s_k}x+x_0))(v_{k,1}+v_{k,2})(v_{k,1}-v_{k,2})\,dx=0,\quad i=1,2;\quad j=1,2,\cdots,n
\end{equation*}
and
\begin{equation}\label{thm:1.3-14}
\int_{\mathbb{R}^n}\frac{\partial_{x_j}(V(\eta_{s_k}x+x_0))}{|\eta_{s_k}x|^{m-2}\eta_{s_k}x_j}|x|^{m-2}x_j(v_{k,1}+v_{k,2})W_k\,dx=0,\quad i=1,2;\quad j=1,2,\cdots,n.
\end{equation}
Since $\lambda_{n+1}=0$,  $Q_{s_0}(x)=Q_{s_0}(|x|)$ and $Q_{s_0}'(|x|)<0$ for $x\neq0$, letting $k\to \infty$, in \eqref{thm:1.3-14}, we have by \eqref{cor:7.1-1} and $(V3)$ that
\begin{equation*}
\begin{split}
0&=\int_{\mathbb{R}^n}|x|^{m-2}x_jQ_{s_0}W_0\,dx=\int_{\mathbb{R}^n}|x|^{m-2}x_jQ_{s_0}\sum_{i=1}^n\lambda_i\partial_{x_i}Q_{s_0}\,dx
=\lambda_j\int_{\mathbb{R}^n}|x|^{m-3}x_j^2Q_{s_0}Q'_{s_0}(|x|)\,dx.
\end{split}
\end{equation*}
Consequently, $\lambda_1=\lambda_2=\cdots=\lambda_n=0$.

$\Box$

\bigskip

\section{Appendix}

\bigskip

\begin{lemma}\label{lem:A1}
Let  $0<s\leq s_0$. Suppose  $\inf_{x\in \mathbb{R}^n}V>0$ and $V\in L_{loc}^\infty(\mathbb{R}^n)$. Then $S_V(n,s)$ defined in \eqref{eq:1.4} is not achieved.
\end{lemma}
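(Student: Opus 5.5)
The plan is to show that for $0<s\le s_0$ the infimum $S_V(n,s)$ equals a "degenerate" value that no function in $H^s$ can attain. We treat the critical case $s=s_0$ and the supercritical case $s<s_0$ (where $p>2^*_s$) separately.

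\emph{Case $s<s_0$.} Here $p>2^*_s$, and we show $S_V(n,s)=0$; this cannot be attained, since $E_{V,s}(u)>0$ for every $u\ne 0$ because $\inf V>0$. Fix $\varphi\in C_c^\infty(\mathbb{R}^n)\setminus\{0\}$ and set $\varphi_\lambda(x)=\varphi(\lambda x)$ with $\lambda\to\infty$. Since $V\in L^\infty_{loc}$ and the supports stay inside a fixed ball, we get
\[
\|(-\Delta)^{s/2}\varphi_\lambda\|_2^2=\lambda^{2s-n}\|(-\Delta)^{s/2}\varphi\|_2^2,\qquad \int V\varphi_\lambda^2\le C\lambda^{-n},\qquad \|\varphi_\lambda\|_p^2=\lambda^{-2n/p}\|\varphi\|_p^2 .
\]
Hence $E_{V,s}(\varphi_\lambda)\le C(\lambda^{2s-n+2n/p}+\lambda^{-n+2n/p})$. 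The exponent $2s-n+2n/p$ is negative exactly when $s<s_0$, and $-n+2n/p<0$ because $p>2$. So $E_{V,s}(\varphi_\lambda)\to0$ and $S_V(n,s)=0$.

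\emph{Case $s=s_0$.} Now $p=2^*_{s_0}$. First, $E_{V,s_0}(u)\ge \|(-\Delta)^{s_0/2}u\|_2^2/\|u\|_p^2\ge S(n,s_0)$, so $S_V(n,s_0)\ge S(n,s_0)$. For the reverse inequality, use the rescaled extremals $\psi_{\varepsilon,s_0}$, as in the proof of Lemma \ref{lem3.1}, cut off by a fixed bump $\eta$ so that $V\in L^\infty_{loc}$ suffices. As $\varepsilon\to0$ the Dirichlet-quotient part tends to $S(n,s_0)$ (the standard Brezis--Nirenberg type cutoff estimates). The potential term is bounded by
\[
\|V\|_{L^\infty(\mathrm{supp}\,\eta)}\int\eta^2\psi_{\varepsilon,s_0}^2 ,
\]
and this tends to $0$ because $n>4s_0$ (which holds since $n\ge4>4s_0$) makes $\psi_{s_0}\in L^2$, giving order $\varepsilon^{2s_0}$. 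Thus $S_V(n,s_0)=S(n,s_0)$.

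Now suppose $u\ne0$ attains $S_V(n,s_0)$. Then
\[
S(n,s_0)\|u\|_p^2\le\|(-\Delta)^{s_0/2}u\|_2^2<\|(-\Delta)^{s_0/2}u\|_2^2+\int Vu^2=S(n,s_0)\|u\|_p^2 ,
\]
where the strict inequality uses $\inf V>0$. This is a contradiction.

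The main obstacle is the upper bound at $s=s_0$. We need the truncation error of the cut-off bubble in the $\dot H^{s_0}$ seminorm to vanish, which is standard but nonlocal. If the cutoff estimate is awkward, we can instead avoid the cutoff by using the fact that $C_c^\infty$ is dense in $\dot H^{s_0}\cap L^p$. Pick $\varphi\in C_c^\infty$ with quotient within $\delta$ of $S(n,s_0)$, and concentrate it by $\varphi^\lambda=\lambda^{(n-2s_0)/2}\varphi(\lambda x)$. The quotient is scale invariant, while the potential term is at most $C\lambda^{-2s_0}\to0$. This is cleaner and is the route I would take.
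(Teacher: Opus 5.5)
Your proof is correct and follows the paper's argument. The case split is the same: for $s<s_0$ the dilation $\varphi(\lambda x)$ drives $E_{V,s}$ to $0$, which no $u\neq 0$ attains since $\inf V>0$. For $s=s_0$ you show $S_V(n,s_0)=S(n,s_0)$ and then rule out a minimizer via $E_{V,s_0}(u)>E_{0,s_0}(u)\ge S(n,s_0)$.

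The one difference is the test function for the upper bound at $s=s_0$. The paper uses the exact bubble $\psi_{s_0}(\lambda x)$, which is not compactly supported, so it tacitly needs $V$ globally bounded (as in the introduction's phrasing $V\in L^\infty$). Your rescaled compactly supported near-extremal needs only $V\in L^\infty_{loc}$, exactly as the lemma states.
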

\begin{proof}
Let $\psi\in C_c(\mathbb{R}^n)$, $\psi\gneq 0$. For  $\lambda>0$, $\psi_\lambda(x)=\psi(\lambda x)$ verifies
\begin{equation*}
\begin{split}
E_{V,s}(\psi_\lambda)=\frac{\int_{\mathbb{R}^n}|(-\Delta)^{s/2}\psi|^2\,dx+\lambda^{-2s}\int_{\mathbb{R}^n}V\psi^2\,dx}
{\lambda^{n-2s-\frac{2n}{p}}\Big(\int_{\mathbb{R}^n}|\psi|^p\,dx\Big)^{2/p}}.
\end{split}
\end{equation*}

If $0<s< s_0$, equivalently,  $p>2_s^*$, we have  $n-2s-\frac{2n}{p}>0$. Therefore,
$$
\lim_{\lambda\to +\infty}E_{V,s}(\psi_\lambda)=0,
$$
and we deduce further that   $S_V(n,s)=0$. This means that  $S_V(n,s)$, $0<s< s_0$, is not achieved. Indeed, for any  minimizer $\varphi\in H^s(\mathbb{R}^n)\setminus\{0\}$ of $S_V(n,s)$, we have  $E_{V,s}(\varphi)=0$ and thus $\varphi=0$, a contradiction.

If $s= s_0$, or $p=2_s^*$, take $\psi_s=(1+|x|^2)^{\frac{2s-n}{2}}$,   by \eqref{eq:1.7} and \eqref{eq:1.9a}, we see that $\psi_{\lambda(x),s}=\psi_s(\lambda x)$,  $\lambda>0$, satisfies
  \begin{equation*}
\begin{split}
E_{V,s}(\psi_{\varepsilon,s})=\frac{\int_{\mathbb{R}^n}|(-\Delta)^{s/2}\psi_s|^2\,dx+\lambda^{-2s}\int_{\mathbb{R}^n}V\psi_s^2\,dx}
{\Big(\int_{\mathbb{R}^n}|\psi_s|^p\,dx\Big)^{2/p}}=S(n,s)+\frac{\lambda^{-2s}\int_{\mathbb{R}^n}V\psi_s^2\,dx}
{\Big(\int_{\mathbb{R}^n}|\psi_s|^p\,dx\Big)^{2/p}},
\end{split}
\end{equation*}
which yields
$$
\lim_{\lambda\to +\infty}E_{V,s}(\psi_{\lambda_s})=S(n,s).
$$
 The fact
 $$
 S(n,s)\leq S_V(n,s)\leq E_{V,s}(\psi_{\lambda_s})
 $$
 gives $S_V(n,s)=S(n,s)$. If there exists a minimizer $\varphi$ of $S_V(n,s_0)$ for $s= s_0$, we obtain
\[
S(n,s)=E_{V,s}(\varphi)> E_{0,s}(\varphi)\geq S(n,s)
\]
a contradiction.
\end{proof}

\bigskip

{\bf Data Availability} {There is no data in the paper.}

{\bf Statements and Declarations}  { There is no conflict of interest for all authors.}

{\bf Acknowledgements} {  Jinge Yang was supported by NNSF of China, No:12361025 and  by Jiangxi
Provincial Natural Science Foundation, No:20232BAB201002 , 20252BAC230002. Jianfu Yang is supported by NNSF of China, No:12171212.
}

\end{document}